\documentclass[reqno]{amsart}
\usepackage{amsmath}
\usepackage{amsthm,amscd,amssymb,amsfonts, amsbsy}
\usepackage{latexsym}
\usepackage{mathrsfs}
\usepackage{pxfonts}
\usepackage{enumerate}

\date{\today}

\numberwithin{equation}{section}

\theoremstyle{plain}
\newtheorem{theorem}{Theorem}[section]
\newtheorem{lemma}{Lemma}[section]
\newtheorem{corollary}{Corollary}[section]

\theoremstyle{definition}
\newtheorem{definition}{Definition}[section]

\newtheorem*{A0}{(A0)}
\newtheorem*{A1}{(A1)}
\newtheorem*{A2}{(A2)}
\newtheorem*{D}{(D)}
\newtheorem*{A}{(A3 $(\gamma)$)}

\newtheorem*{acknowledgment}{Acknowledgment}

\theoremstyle{remark}
\newtheorem{remark}{Remark}[section]

\newcommand{\dv}{\operatorname{div}}

\newcommand{\supp}{\operatorname{supp}}
\newcommand{\dist}{\operatorname{dist}}
\newcommand{\diam}{\operatorname{diam}}
\newcommand{\tr}{\operatorname{tr}}

\newcommand{\Lip}{\operatorname{Lip}}
\newcommand{\sgn}{\operatorname{sgn}}

\renewcommand{\vec}[1]{\boldsymbol{#1}}

\newcommand{\VMO}{\mathrm{VMO}}

\newcommand{\bR}{\mathbb R}

\newcommand\cA{\mathcal{A}}
\newcommand\cB{\mathcal{B}}

\newcommand\cF{\mathcal{F}}
\newcommand\cG{\mathcal{G}}
\newcommand\cK{\mathcal{K}}

\newcommand\cM{\mathcal{M}}

\newcommand\cQ{\mathcal{Q}}

\newcommand\cU{\mathcal{U}}

\newcommand\sB{\mathscr{B}}

\newcommand\sL{\mathscr{L}}

\providecommand{\set}[1]{\{#1\}}
\providecommand{\Set}[1]{\left\{#1\right\}}
\providecommand{\bigset}[1]{\bigl\{#1\bigr\}}

\providecommand{\abs}[1]{\lvert#1\rvert}
\providecommand{\Abs}[1]{\left\lvert#1\right\rvert}
\providecommand{\bigabs}[1]{\bigl\lvert#1\bigr\rvert}

\providecommand{\norm}[1]{\lVert#1\rVert}

\providecommand{\bignorm}[1]{\bigl\lVert#1\bigr\rVert}

\renewcommand{\epsilon}{\varepsilon}
\renewcommand{\qedsymbol}{$\blacksquare$}

\begin{document}
\title[Stokes systems]{The Green function for the Stokes system with measurable coefficients}

\author[J. Choi]{Jongkeun Choi}
\address[J. Choi]{Department of Mathematics, Korea University, Seoul 02841, Republic of Korea}
\email{jongkeun\_choi@korea.ac.kr}
\thanks{}

\author[K.-A. Lee]{Ki-Ahm Lee}
\address[K.-A. Lee]{Department of Mathematical Sciences, Seoul National University, Seoul 151-747, Republic of Korea \& Center for Mathematical Challenges, Korea Institute for Advanced Study, Seoul 130-722, Republic of Korea}
\email{kiahm@snu.ac.kr}
\thanks{}

\subjclass[2010]{Primary 35J08, 35J57, 35R05}
\keywords{Stokes system; Green function; $L^q$-estimate; $\mathrm{VMO}$ coefficients; Reifenberg flat domain}

%\tableofcontents

%=========================================================
\begin{abstract}
We study the Green function 
for the stationary Stokes system with bounded measurable coefficients in a bounded Lipschitz domain $\Omega\subset \mathbb{R}^n$, $n\ge 3$.
We construct the Green function in $\Omega$ under the condition $(\bf{A1})$ that weak solutions of the system enjoy interior H\"older continuity.
We also prove that $(\bf{A1})$ holds, for example, when the coefficients are $\mathrm{VMO}$.
Moreover, we obtain the global pointwise estimate for the  Green function under the additional assumption $(\bf{A2})$ that weak solutions of Dirichlet problems are locally  bounded up to the boundary of the domain.
By proving a priori $L^q$-estimates for  Stokes systems with $\mathrm{BMO}$ coefficients on a Reifenberg domain, we verify that $(\bf{A2})$ is satisfied when the coefficients are $\mathrm{VMO}$ and $\Omega$ is a bounded $C^1$ domain.
\end{abstract}
%=========================================================

\maketitle

%=========================================================
\section{Introduction}
%=========================================================
We consider the Dirichlet boundary value problem for the stationary Stokes system
\begin{equation}		\label{1j}
\left\{
\begin{aligned}
\sL\vec u+Dp=\vec f +D_\alpha \vec f_\alpha\quad &\text{in }\, \Omega,\\
\dv \vec u=g \quad &\text{in }\, \Omega,\\
\vec u=0 \quad &\text{on }\, \partial \Omega,
\end{aligned}
\right.
\end{equation}
where $\Omega$ is a domain in $\bR^n$.
Here, $\sL$ is an  elliptic operator of the form
\[
\sL\vec u=-D_\alpha(A_{\alpha\beta}D_\beta \vec u), 
\]
where the coefficients $A_{\alpha\beta}=A_{\alpha\beta}(x)$ are $n\times n$ matrix valued functions on  $\bR^n$ with entries $a_{\alpha\beta}^{ij}$ that satisfying the strong ellipticity condition; i.e., there is a constant $\lambda\in (0,1]$ such that for any $x\in \bR^n$ and $\vec \xi, \, \vec \eta\in \bR^{n\times n}$, we have 
\begin{equation}		\label{ubc}	
\lambda\abs{\vec \xi}^2\le a_{\alpha\beta}^{ij}(x)\xi^j_\beta \xi^i_\alpha, \qquad \bigabs{a_{\alpha\beta}^{ij}(x)\xi^j_\beta \eta^i_\alpha}\le \lambda^{-1}\abs{\vec \xi}\abs{\vec\eta}.
\end{equation}
We do not assume that the coefficients $A_{\alpha\beta}$ are symmetric.
The adjoint operator $\sL^*$ of $\sL$ is given by 
\[
\sL^*\vec u=-D_\alpha(A_{\beta \alpha}(x)^{\tr} D_\beta \vec u).
\]
We remark that the coefficients of $\sL^*$  also satisfy \eqref{ubc} with the same constant  $\lambda$.
There has been some interest in studying boundary value problems for Stokes systems with bounded coefficients; see, for instance, Giaquinta-Modica \cite{MR0641818}.
They obtained various interior and boundary estimates for both linear and nonlinear systems of the type of the stationary Navier-Stokes system.

Our first focus  is to study of the Green function for the  Stokes system with $L^\infty$ coefficients in a bounded Lipschitz domain $\Omega\subset \bR^n$, $n\ge 3$.
More precisely, we consider a pair $(\vec G(x,y), \vec \Pi(x,y))$, where $\vec G(x,y)$ is an $n\times n$ matrix valued function and $\vec \Pi(x,y)$ is an $n\times1$ vector valued function on $\Omega\times \Omega$, satisfying
\[
\left\{
\begin{aligned}
\sL_x\vec G(x,y)+D_x \vec \Pi(x,y)=\delta_y(x)\vec I &\quad \text{in }\, \Omega,\\
\dv_x\vec G(x,y)=0 &\quad \text{in }\, \Omega,\\
\vec G(x,y)=0& \quad \text{on }\, \partial \Omega.
\end{aligned}
\right.
\]
Here, $\delta_y(\cdot)$ is Dirac delta function concentrated at $y$ and $\vec I$ is the $n\times n$ identity matrix. 
See Definition \ref{0110.def} for the precise definition of the Green function.
We prove that if  weak solutions  of either
\[
\sL\vec u+Dp=0, \quad \dv \vec u=0 \quad \text{in }\, B_R
\]
or 
\[
\sL^*\vec u+Dp=0, \quad \dv \vec u=0 \quad \text{in }\, B_R
\]
satisfy the following De Giorgi-Moser-Nash type estimate 
\begin{equation}		\label{0211.eq1}
[\vec u]_{C^\mu(B_{R/2})}\le CR^{-n/2-\mu}\norm{\vec u}_{L^2(B_R)},
\end{equation}
then the Green function $(\vec G(x,y), \vec \Pi(x,y))$ exists and satisfies a natural growth estimate near the pole; see Theorem \ref{1226.thm1}.
It can be shown, for example, that if  the coefficients of $\sL$ belong to the class of $\VMO$ (vanishing mean oscillations), then the  interior H\"older estimate \eqref{0211.eq1} above holds; see Theorem \ref{0110.thm1}.
Also, we are interested in the following global pointwise estimate for the Green function: there exists a positive constant $C$ such that 
\begin{equation}		\label{0304.eq2}
\abs{\vec G(x,y)}\le C\abs{x-y}^{2-n}, \quad \forall x,\, y\in \Omega, \quad x\neq y.
\end{equation}
If we assume further that the operator $\sL$ has the property that the weak solution of 
\[
\left\{
\begin{aligned}
\sL\vec u+Dp=\vec f &\quad \text{in }\, \Omega,\\
\dv{\vec u}=g &\quad \text{in }\, \Omega,\\
\vec u=0 &\quad \text{on }\, \partial \Omega,
\end{aligned}
\right.
\]
is locally bounded up to the boundary, then we obtain  the pointwise estimate \eqref{0304.eq2} of the Green function.
This local boundedness condition $(\bf{A2})$ is satisfied when  the coefficients of $\sL$ belong to the class of $\VMO$ and $\Omega$ is a bounded $C^1$ domain.
To see this, we employ the standard localization method and the global $L^q$-estimate for the Stokes system with Dirichlet boundary condition, which is our second focus in this paper.

Green functions for the linear equation and system  have been studied by many authors. 
In \cite{MR0161019}, Littman-Stampacchia-Weinberger obtained the pointwise estimate of the Green function for elliptic equation.
Gr\"uter-Widman \cite{MR0657523} proved existence and uniqueness of the Green function for elliptic equation, and the corresponding results for elliptic systems with continuous coefficients were obtained in \cite{MR1354111,MR0894243}.
Hofmann-Kim proved the existence of Green functions for elliptic systems with variable coefficients on any open domain.
Their methods are general enough to allow the coefficients to be $\VMO$.
For more details, see \cite{MR2341783}.
We also refer the reader to \cite{MR2718661, MR2763343} and references therein for the study of Green functions for elliptic systems.
Regarding the study of the Green function for the  Stokes system with the Laplace operator, we refer the reader to \cite{MR2465713,MR2718661}.
In those papers, the authors obtained the global pointwise estimate \eqref{0304.eq2} for the Green function on a three dimensional Lipschitz domain.
Mitrea-Mitrea \cite{MR2763343} established regularity properties of the Green function for the Stokes system with Dirichlet boundary condition in a  two or three dimensional Lipschitz domain.
Recent progress may be found in the article of Ott-Kim-Brown \cite{MR3320459}.
This work  includes a construction of the Green function with mixed boundary value problem for the Stokes system in two dimensions.

Our second focus in this paper is the global $L^q$-estimates for the Stokes systems of divergence form with  the Dirichlet boundary condition.
As mentioned earlier, the $L^q$-estimate for the Stokes system is the key ingredient in establishing the global pointwise estimate for the Green function. 
Moreover, the study of the regularity of solutions to the Stokes system plays an essential role in the mathematical theory of viscous fluid flows governed by the Navier-Stokes system.
For this reason, the $L^q$-estimate for the Stokes system with the Laplace operator was discussed in many papers. 
We refer the reader to Galdi-Simader-Sohr \cite{MR1313554}, Maz'ya-Rossmann \cite{MR2321139}, and  references therein.
Recently,  estimates in Besov spaces for the  Stokes system are obtained by Mitrea-Wright \cite{MR2987056}.
In this paper, we consider the $L^q$-estimates for Stokes systems with variable coefficients in non-smooth domains.
More precisely, we prove that if the coefficients of $\sL$ 
have small bounded mean oscillations
on a Reifenberg flat domain $\Omega$, then the solution $(\vec u, p)$ of the problem \eqref{1j} satisfies the following $L^q$-estimate:
\begin{equation*}	
\norm{p}_{L^q(\Omega)}+\norm{D\vec u}_{L^q(\Omega)}\le  C\big(\norm{\vec f}_{L^q(\Omega)}+\norm{\vec f_\alpha}_{L^q(\Omega)}+\norm{g}_{L^q(\Omega)}\big).
\end{equation*}
Moreover, we obtain the solvability in Sobolev space for the systems on a bounded Lipschitz domain.
It has been studied by many authors that the $L^q$-estimates for elliptic and parabolic systems with variable coefficients on a Reifenberg flat domain.
We refer the reader to Dong-Kim \cite{MR2835999, MR3013054} and Byun-Wang \cite{MR2069724}.
In particular, in \cite{MR2835999}, the authors proved $L^q$-estimates for divergence form higher order systems with partially BMO coefficients on a Reifenberg flat domain.
Their argument is based on mean oscillation estimates and $L^\infty$-estimates combined with  the measure theory on the ``crawling of ink spots" which can be found in  \cite{MR0563790}.
We mainly follow the arguments in \cite{MR2835999}, but the technical details are different due to the pressure term $p$.
The presence of the pressure term $p$ makes the argument more involved.

The organization of the paper is as follows.
In Section \ref{sec_mr}, we introduce some notation and state our main theorems, including the existence and global pointwise estimates for Green functions, and their proofs are presented in Section \ref{1006@sec1}. Section \ref{sec_es} is devoted to the study of the $L^q$-estimate for the Stokes system with the Dirichlet boundary condition.
In Appendix, we provide some technical lemmas.

%=========================================================
\section{Main results}		\label{sec_mr}
%=========================================================
Before we state our main theorems,   we introduce some necessary notation.
Throughout the article, we use $\Omega$ to denote a bounded domain in $\bR^n$, where $n\ge 2$. 
For any $x=(x_1,\ldots,x_n)\in \Omega$ and $r>0$,we write $\Omega_r(x)=\Omega \cap B_r(x)$, where $B_r(x)$ is the usual Euclidean ball of radius $r$ centered at $x$.
We also denote 
$$
B_r^+(x)=\{y=(y_1,\ldots,y_n)\in B_r(x):y_1>x_1\}.
$$
We define $d_x=\dist(x,\partial \Omega)=\inf\set{\abs{x-y}:y\in \partial \Omega}$.
For a function $f$ on $\Omega$, we denote the average of $f$ in $\Omega$ to be
\[
(f)_\Omega=\fint_\Omega f\,dx.
\]
We use the notation 
\[
\sgn z=
\left\{
\begin{aligned}
z/\abs{z} &\quad \text{if }\, z\neq 0,\\
0 &\quad \text{if }\, z=0.
\end{aligned}
\right.
\]
For $1\le q\le \infty$, we define the space $L^q_0(\Omega)$ as the family of all functions $u\in L^q(\Omega)$ satisfying 
$(u)_\Omega=0$.
We denote by $W^{1,q}(\Omega)$ the usual Sobolev space and $W^{1,q}_0(\Omega)$ the closure of $C^\infty_0(\Omega)$ in $W^{1,q}(\Omega)$.
Let $\vec f,\, \vec f_\alpha\in L^q(\Omega)^n$ and $g\in L^q_0(\Omega)$.
We say that $(\vec u,p)\in W^{1,q}_0(\Omega)^n\times {L}^q_0(\Omega)$ is a weak solution of the problem 
\begin{equation}\tag{SP}\label{dp}
\left\{
\begin{aligned}
\sL \vec u+D p=\vec f+D_\alpha\vec f_\alpha &\quad \text{in }\, \Omega,\\
\dv \vec u=g &\quad \text{in }\, \Omega,
\end{aligned}
\right.
\end{equation}
if we have 
\begin{equation}		\label{1218.eq0}
\dv \vec u=g\quad \text{in }\, \Omega
\end{equation}
 and 
\begin{equation*}
\int_\Omega A_{\alpha \beta}D_\beta \vec u\cdot D_\alpha \vec \varphi\,dx-\int_\Omega p\dv \vec\varphi\,dx
=\int_\Omega \vec f\cdot \vec \varphi\,dx-\int_\Omega \vec f_\alpha \cdot D_\alpha \vec \varphi\,dx\end{equation*}
for any $\vec \varphi\in C^\infty_0(\Omega)^n$.
Similarly, we say that $(\vec u,p)\in W^{1,q}_0(\Omega)^n\times {L}^q_0(\Omega)$ is a weak solution of the problem 
\begin{equation}\tag{SP$^*$}\label{dps}
\left\{
\begin{aligned}
\sL^* \vec u+D p=\vec f+D_\alpha\vec f_\alpha &\quad \text{in }\, \Omega,\\
\dv \vec u=g &\quad \text{in }\, \Omega,
\end{aligned}
\right.
\end{equation}
if we have \eqref{1218.eq0} and 
\begin{equation}		\label{1218.eq1}
\int_\Omega A_{\alpha \beta}D_\beta \vec \varphi\cdot D_\alpha \vec u\,dx-\int_\Omega p\dv \vec\varphi\,dx=\int_\Omega \vec f\cdot \vec \varphi\,dx-\int_\Omega \vec f_\alpha \cdot D_\alpha \vec \varphi\,dx
\end{equation}
for any $\vec \varphi\in C^\infty_0(\Omega)^n$.

\begin{definition}[Green function]		\label{0110.def}
Let $\vec G(x,y)$ be an $n\times n$ matrix valued function and $\vec \Pi(x,y)$ be an $n\times 1$ vector valued function on $\Omega\times\Omega$.
We say that a pair $(\vec G(x,y),\vec\Pi(x,y))$ is a Green function for the Stokes system  $\eqref{dp}$ if it satisfies the following properties:
\begin{enumerate}[a)]
\item
$\vec G(\cdot,y)\in W^{1,1}_0(\Omega)^{n\times n}$ and $\vec G(\cdot,y)\in W^{1,2}(\Omega\setminus B_R(y))^{n\times n}$ for all $y\in \Omega$ and $R>0$.
Moreover, $\vec \Pi(\cdot,y)\in L^1_0(\Omega)^n$ for all $y\in \Omega$.
\item
For any $y\in \Omega$, $(\vec G(\cdot,y),\vec \Pi(\cdot,y))$ satisfies
\begin{equation*}
\dv \vec G(\cdot,y)=0 \quad \text{in }\, \Omega
\end{equation*} 
and 
\begin{equation*}		
\sL\vec G(\cdot,y)+D \vec \Pi(\cdot,y)=\delta_y\vec I \quad \text{in }\, \Omega
\end{equation*}
in the sense that for any $1\le k\le n$ and $\vec \varphi\in C^\infty_0(\Omega)^n$, we have 
\[
\int_\Omega a_{\alpha\beta}^{ij}D_\beta G^{jk}(x,y)D_\alpha \varphi^i(x)\,dx-\int_\Omega \Pi^k(x,y)\dv \vec \varphi(x)\,dx=\varphi^k(y).
\] 
\item
If $(\vec u,p)\in W^{1,2}_0(\Omega)^n\times L^2_0(\Omega)$ is the weak solution of \eqref{dps} with  $\vec f,\, \vec f_\alpha\in L^\infty(\Omega)^n$ and $g\in {L}^\infty_0(\Omega)$, then we have 
\[
\vec u(x)=\int_\Omega \vec G(y,x)^{\tr}\vec f(y)\,dy-\int_\Omega D_\alpha \vec  G(y,x)^{\tr}\vec f_\alpha(y)\,dy-\int_\Omega \vec\Pi(x,y)g(y)\,dy.
\]
\end{enumerate}
\end{definition}
\begin{remark}
The $L^2$-solvability of the Stokes system with the Dirichlet boundary condition  (see Section \ref{1006@sec2}) and  the part c) of the above definition  give the uniqueness of a Green function.
Indeed, if $(\tilde{\vec G}(x,y), \tilde{\vec\Pi}(x,y))$ is another Green function for $\eqref{dp}$, then by the uniqueness of the solution, we have 
\[
\int_\Omega \vec G(y,x)^{\tr}\vec f(y)\,dy-\int_\Omega \vec \Pi(x,y)g(y)\,dy=\int_\Omega \tilde{\vec G}(y,x)^{\tr}\vec f(y)\,dy-\int_\Omega \tilde{\vec \Pi}(x,y)g(y)\,dy
\]
for any $\vec f\in C^\infty_0(\Omega)^n$ and $g\in C^\infty_0(\Omega)$.
Therefore, we conclude that $(\vec G, \vec \Pi)=(\tilde{\vec G},\tilde{\vec \Pi})$ a.e. in $\Omega\times \Omega$.
\end{remark}

%=========================================================
\subsection{Existence of the Green function}		\label{0110.sec1}
%=========================================================
To construct the Green function, we impose the following  conditions. 

\begin{A0}
There exist positive constants $R_1$ and $K_1$ such that the following holds:
for any $x_0\in \partial \Omega$ and $0<r\le R_1$, there is a coordinate system depending on $x_0$ and $r$ such that in the new coordinate system, we have 
$$
\Omega_r(x_0)=\{x\in B_r(x_0):x_1>\psi(x')\},
$$
where $\psi:\bR^{n-1}\to \bR$ is a Lipschitz function with $\Lip(\psi)\le K_1$.
\end{A0}

\begin{A1}
There exist constants $\mu\in (0,1]$ and $A_1>0$ such that the following holds:
if $(\vec u,p)\in W^{1,2}(B_R(x_0))^n\times L^2(B_R(x_0))$ satisfies
\begin{equation}		\label{160907@eq2}		
\left\{
\begin{aligned}
\sL\vec u+Dp=0 \quad \text{in }\, B_R(x_0),\\
\dv \vec u=0 \quad \text{in }\, B_R(x_0),
\end{aligned}
\right.
\end{equation}
where $x_0\in \Omega$ and $R\in (0,d_{x_0}]$, then we have 
\begin{equation}		\label{160907@eq3}
[\vec u]_{C^{\mu}(B_{R/2}(x_0))}\le A_1R^{-\mu}\left(\fint_{B_R(x_0)}\abs{\vec u}^2\,dx\right)^{1/2},
\end{equation}
where $[\vec u]_{C^\mu(B_{R/2}(x_0))}$ denotes the usual H\"older seminorm.
The statement is valid, provided that $\sL$ is replaced by $\sL^*$.
\end{A1}

\begin{theorem}		\label{1226.thm1}
Let $\Omega$ be a  domain in $\bR^n$ with $\diam(\Omega)\le K_0$, where $n\ge 3$.
Assume conditions $(\bf{A0})$ and  $(\bf{A1})$.
Then there exist  Green functions $(\vec G(x,y), \vec \Pi(x,y))$ and $(\vec G^*(x,y),\vec \Pi^*(x,y))$ for $\eqref{dp}$ and $\eqref{dps}$, respectively, such that the following identity:
\begin{equation}		\label{1226.eq1a}
\vec G(x,y)=\vec G^*(y,x)^{\tr}, \quad \forall x,\, y\in \Omega, \quad x\neq y.
\end{equation}
Also, for any $x,\, y\in \Omega$ satisfying $0<\abs{x-y}<d_y/2$, we have 
\begin{equation*}		
\abs{\vec G(x,y)}\le C\abs{x-y}^{2-n}.
\end{equation*} 
Moreover, for any $y\in \Omega$ and $R\in (0, d_y]$, we obtain
\begin{enumerate}[i)]
\item
$\norm{\vec G(\cdot,y)}_{L^{2n/(n-2)}(\Omega\setminus B_R(y))}+\norm{D\vec G(\cdot,y)}_{L^2(\Omega\setminus B_R(y))}\le CR^{(2-n)/2}$.
\item
$\abs{\set{x\in \Omega:\abs{\vec G(x,y)}>t}}\le Ct^{-n/(n-2)}$ for all $t>d_y^{2-n}$.
\item
$\abs{\set{x\in \Omega:\abs{D_x\vec G(x,y)}>t}}\le Ct^{-n/(n-1)}$ for all $t>d^{1-n}_y$.
\item
$\norm{\vec G(\cdot,y)}_{L^q(B_R(y))}\le C_qR^{2-n+n/q}$, where $ q\in [1,n/(n-2))$.
\item
$\norm{D\vec G(\cdot,y)}_{L^q(B_R(y))}\le C_qR^{1-n+n/q}$, where $q\in [1,n/(n-1))$
\item
$\norm{\vec \Pi(\cdot,y)}_{L^q(\Omega)}\le C_{y,q}$, where $q\in[1,n/(n-1))$.
\end{enumerate}
In the above, 
$C=C(n,\lambda,K_0,K_1,R_1,\mu,A_1)$,  $C_q=C_q(n,\lambda,K_0,K_1,R_1,\mu, A_1,q)$, and $C_{y,q}=C_{y,q}(n,\lambda,K_0,K_1,R_1,\mu,A_1,q,d_y)$.
The same estimates are also valid for $(\vec G^*(x,y), \vec \Pi^*(x,y))$.
\end{theorem}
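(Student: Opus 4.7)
The plan is to adapt the construction of Hofmann--Kim \cite{HK07} to the Stokes setting, the principal novelty being the treatment of the pressure through a Bogovskii-type corrector. I would proceed by approximation: fix $y\in\Omega$, choose a standard mollifier $\eta_\epsilon$ supported in $B_\epsilon$, and for each $k=1,\dots,n$ let $(\vec u_{\epsilon,k},p_{\epsilon,k})\in W^{1,2}_0(\Omega)^n\times L^2_0(\Omega)$ be the $L^2$-weak solution of
\begin{equation*}
\sL\vec u_{\epsilon,k}+Dp_{\epsilon,k}=\eta_\epsilon(\cdot-y)\vec e_k,\quad \dv\vec u_{\epsilon,k}=0\ \text{in }\Omega,\quad \vec u_{\epsilon,k}=0\ \text{on }\partial\Omega,
\end{equation*}
whose existence follows from the $L^2$-theory referenced in Section \ref{1006@sec2}. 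Set $G^{jk}_\epsilon(x,y):=u^j_{\epsilon,k}(x)$ and $\Pi^k_\epsilon(x,y):=p_{\epsilon,k}(x)$. The goal is to prove every estimate of the theorem for $(\vec G_\epsilon,\vec\Pi_\epsilon)$ uniformly in $\epsilon$, and then pass to a weak limit.

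The heart of the argument is the pointwise bound $|\vec G_\epsilon(x,y)|\le C|x-y|^{2-n}$ for $|x-y|\ge 2\epsilon$. For $x$ with $r:=|x-y|\in(2\epsilon,d_y/2]$, the forcing vanishes on $B_{r/2}(x)$, so $(\vec G_\epsilon(\cdot,y)\vec e_k,\Pi^k_\epsilon(\cdot,y))$ solves the homogeneous system \eqref{160907@eq2} there, and condition $(\bf{A1})$ gives
\begin{equation*}
|\vec G_\epsilon(x,y)|\le Cr^{-n/2}\|\vec G_\epsilon(\cdot,y)\|_{L^2(B_{r/2}(x))}.
\end{equation*}
To control the right side, I would decompose $\Omega\setminus B_r(y)$ dyadically and combine Sobolev embedding with a Caccioppoli inequality on each annulus. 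Since a plain cut-off breaks the divergence-free condition, the Caccioppoli step requires subtracting a Bogovskii corrector that restores $\dv=0$; the commutator is absorbed into the good term. Iterating yields the pointwise bound and the energy estimate (i) simultaneously. The weak-type bounds (ii) and (iii) follow from the pointwise bound and (i) via a distribution-function argument on dyadic annuli, and (iv), (v) follow from the weak-type bounds through the layer-cake formula.

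For the pressure estimate (vi) I would use duality. Given $\vec\varphi\in C^\infty_0(\Omega)^n$ with $\|\vec\varphi\|_{L^{q'}}\le 1$ and $q'>n$, the Bogovskii operator on the Lipschitz domain $\Omega$ produces $\vec\psi\in W^{1,q'}_0(\Omega)^n$ with $\dv\vec\psi=\vec\varphi-(\vec\varphi)_\Omega$ and $\|D\vec\psi\|_{L^{q'}}\le C\|\vec\varphi\|_{L^{q'}}$. Testing the equation for $(\vec G_\epsilon,\vec\Pi_\epsilon)$ against $\vec\psi$, using (v) to bound $D\vec G_\epsilon$ in $L^q$, and exploiting the mean-zero normalization to absorb $(\vec\varphi)_\Omega$, yields $\|\vec\Pi_\epsilon(\cdot,y)\|_{L^q}\le C_q$ uniformly in $\epsilon$. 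With every estimate in hand, a diagonal subsequence converges to a pair $(\vec G(\cdot,y),\vec\Pi(\cdot,y))$ weakly (and strongly on compact subsets of $\Omega\setminus\{y\}$), and passing to the limit in the weak formulation verifies properties (a) and (b) of Definition \ref{0110.def}. The same construction applied to $\sL^*$ yields $(\vec G^*,\vec\Pi^*)$. Property (c) is obtained by pairing the approximate equation for $\vec G_\epsilon(\cdot,y)$ against the $L^2$-solution of \eqref{dps} and passing to the limit; the symmetry \eqref{1226.eq1a} then follows by pairing the two approximate equations against each other and letting both mollification parameters go to zero, using the interior continuity provided by $(\bf{A1})$ to identify the limits pointwise.

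The central difficulty I expect is the pressure. Every localization of $\vec G$ with a cut-off, whether for Caccioppoli, pointwise bounds, or dyadic decomposition, breaks the divergence-free constraint, so a Bogovskii corrector must be introduced to restore it, coupling each estimate for $\vec G$ to an estimate for $\vec\Pi$. Consequently the bootstrap from $(\bf{A1})$ to the full pointwise bound must be executed with simultaneous control of the pressure, and keeping all constants uniform in $\epsilon$ requires careful bookkeeping that has no counterpart in the purely elliptic setting of \cite{HK07}.
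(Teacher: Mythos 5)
Your overall architecture (averaged Green function, uniform-in-$\epsilon$ estimates, weak limits, duality for the pressure bound, pairing of the two approximations for the symmetry) matches the paper's, but there is a genuine gap at the step you yourself call the heart of the argument: the uniform pointwise bound $|\vec G_\epsilon(x,y)|\le C|x-y|^{2-n}$. Reducing it via $(\bf{A1})$ to a bound on $\|\vec G_\epsilon(\cdot,y)\|_{L^2(B_{r/2}(x))}$ is fine, but the proposed control of that quantity by ``dyadic decomposition plus Caccioppoli and Sobolev on each annulus, iterated'' cannot close. Caccioppoli bounds $\|D\vec G_\epsilon\|_{L^2}$ on an annulus by $r^{-1}\|\vec G_\epsilon\|_{L^2}$ on a larger one, and Sobolev goes back the other way, so each round of the iteration reproduces the same quantity at a comparable scale multiplied by a constant larger than one; the only a priori input available is the global energy bound $\|D\vec G_\epsilon\|_{L^2(\Omega)}\le C\epsilon^{(2-n)/2}$, which degenerates as $\epsilon\to0$. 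Nothing in the scheme injects the correct scale $r^{2-n}$. Equivalently, estimate i) would give the pointwise bound at once via H\"older, but the proof of i) by testing with $\eta^2\vec G_\epsilon$ itself requires the pointwise bound on the transition annulus (this is exactly where \eqref{1229.eq3c} invokes Lemma \ref{0929-thm1}), so proving the two ``simultaneously'' is circular.

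The missing ingredient is a duality argument against the adjoint problem, which is also where the adjoint half of $(\bf{A1})$ enters (your write-up never uses it). One solves $\sL^*\vec u+Dp=\vec f$, $\dv\vec u=0$ with $\vec f\in L^\infty$ supported in $B_R(y)$, proves the local boundedness $\|\vec u\|_{L^\infty(B_{R/2}(y))}\le CR^2\|\vec f\|_{L^\infty}$ --- itself a nontrivial freezing/Campanato argument, Lemma \ref{1229.lem1} --- and then reads off from the pairing identity \eqref{0105.eq0} that $\|\vec G_\epsilon(\cdot,y)\|_{L^1(B_R(y))}\le CR^2$ uniformly in $\epsilon$. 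The interior $L^\infty$--$L^1$ estimate \eqref{0929=e1} for the homogeneous system on $B_{2|x-y|/3}(x)$ then yields the pointwise bound, and only afterwards does the cutoff-plus-Bogovskii Caccioppoli argument give i). Everything downstream in your proposal (the weak-type and $L^q$ bounds, the pressure estimate via a Bogovskii dual field, the limit passage, and the symmetry identity) is consistent with the paper.
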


\begin{remark}		
Let $(\vec u,p)\in W^{1,2}_0(\Omega)^n \times L^2_0(\Omega)$ be the weak solution of the problem
\[
\left\{
\begin{aligned}
\sL \vec u+D p=\vec f+D_\alpha\vec f_\alpha &\quad \text{in }\, \Omega,\\
\dv \vec u=0 &\quad \text{in }\, \Omega.
\end{aligned}
\right.
\]
Then by the property c) of Definition \ref{0110.def} and the identity \eqref{1226.eq1a}, we have the following representation for $\vec u$: 
\begin{equation*}	
\vec u(x):=\int_\Omega \vec G(x,y)\vec f(y)\,dy-\int_\Omega D_\alpha \vec G(x,y)\vec f_\alpha(y)\,dy.
\end{equation*}
Also, the following estimates are easy consequences of the identity \eqref{1226.eq1a} and the estimates i) -- v) in Theorem \ref{1226.thm1} for $\vec G^*(\cdot,x)$:
\begin{enumerate}[a)]
\item
$\norm{\vec G(x,\cdot)}_{L^{2n/(n-2)}(\Omega\setminus B_R(x))}+\norm{D\vec G(x,\cdot)}_{L^2(\Omega\setminus B_R(x))}\le CR^{(2-n)/2}$.
\item
$\abs{\set{y\in \Omega:\abs{\vec G(x,y)}>t}}\le Ct^{-n/(n-2)}$ for all $t>d_x^{2-n}$.
\item
$\abs{\set{y\in \Omega:\abs{D_y\vec G(x,y)}>t}}\le Ct^{-n/(n-1)}$ for all $t>d^{1-n}_x$.
\item
$\norm{\vec G(x,\cdot)}_{L^q(B_R(x))}\le C_qR^{2-n+n/q}$, where $ q\in [1,n/(n-2))$.
\item
$\norm{D\vec G(x,\cdot)}_{L^q(B_R(x))}\le C_qR^{1-n+n/q}$, where $q\in [1,n/(n-1))$.
\end{enumerate}
\end{remark}

In the theorem and the remark below, we show that  if the coefficients have a vanishing mean oscillation $(\VMO)$, then the condition $(\bf{A1})$ holds.

\begin{theorem}		\label{0110.thm1}
Suppose that the coefficients of $\sL$ belong to the class of $\VMO$; i.e. we have 
\[
\lim_{\rho\to 0}\omega_\rho(A_{\alpha\beta}):=\lim_{\rho\to 0}\sup_{x\in \bR^n}\sup_{s\le \rho}\fint_{B_s(x)}\bigabs{A_{\alpha\beta}-(A_{\alpha\beta})_{B_s(x)}}=0.
\]
If $(\vec u, p)\in W^{1,2}(B_R(x_0))^n\times L^2(B_R(x_0))$ satisfies \eqref{160907@eq2} with $x_0\in \Omega$ and $0<R\le \min\{d_{x_0},1\}$, then for any $\mu\in (0,1)$, the estimate \eqref{160907@eq3} holds with the constant $A_1$ depending only on $n$, $\lambda$, $\mu$, and the $\VMO$ modulus of the coefficients.
\end{theorem}

\begin{remark}		\label{160907@rem1}
In the above theorem, the constant $\min\{d_{x_0},1\}$ is interchangeable with $\min\{d_{x_0},c\}$ for any fixed $c\in (0, \infty)$, possibly at the cost of increasing the constant $A_1$. 
Setting $c=\diam\Omega$,  the condition $(\bf{A1})$ holds with the constant $A_1$ depending on $n$, $\lambda$, $\diam\Omega$, $\mu$, and the $\VMO$ modulus $\omega_\rho$ of coefficients.

\end{remark}

The following corollary is  immediate consequence of Theorem 
\ref{1226.thm1} and  Remark \ref{160907@rem1}.

\begin{corollary}		
Let $\Omega$ be a Lipschitz domain in $\bR^n$, where $n\ge 3$.
Suppose the coefficients of $\sL$ belong to the class of $\VMO$.
Then there exists the Green function for \eqref{dp} and it satisfies the  assertions in Theorem \ref{1226.thm1}.
\end{corollary}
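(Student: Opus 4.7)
The plan is to derive the corollary as a direct consequence of combining Theorem \ref{0110.thm1}, Remark \ref{160907@rem1}, and Theorem \ref{1226.thm1}, so the task reduces to verifying condition $(\bf{A1})$ under the VMO hypothesis and then quoting the main existence theorem.

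First I would fix some $\mu\in(0,1)$ (any value will do, say $\mu=1/2$). Theorem \ref{0110.thm1} then furnishes a radius $R_0=R_0(n,\lambda,\mu,\omega_\rho)>0$ and a constant $A_1=A_1(n,\lambda,\mu)$ such that the interior H\"older estimate \eqref{160907@eq3} holds for weak solutions of \eqref{160907@eq2} on every ball $B_R(x_0)$ with $R\le \min(d_{x_0},R_0)$. The same assertion applies to $\sL^*$ since the adjoint operator has coefficients in VMO with the same modulus $\omega_\rho$, so both versions of the H\"older estimate demanded by $(\bf{A1})$ are in hand on small scales.

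Next, to remove the restriction $R\le R_0$ and cover all scales $R\in (0,d_{x_0}]$ allowed by $(\bf{A1})$, I would invoke Remark \ref{160907@rem1}: one may replace $R_0$ with $cR_0$ for any $c\in(0,\infty)$ at the cost of enlarging $A_1$. Choosing $c=\diam\Omega/R_0$, so that $cR_0=\diam\Omega\ge d_{x_0}$ for every $x_0\in\Omega$, gives a H\"older estimate \eqref{160907@eq3} valid on all admissible balls, with a new constant $A_1$ depending on $n$, $\lambda$, $\diam\Omega$, $\mu$, and $\omega_\rho$. This is precisely condition $(\bf{A1})$ for both $\sL$ and $\sL^*$.

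Finally, with $(\bf{A1})$ verified, I would simply apply Theorem \ref{1226.thm1} on the bounded Lipschitz domain $\Omega\subset\bR^n$, $n\ge 3$, to obtain the Green function $(\vec G(x,y),\vec\Pi(x,y))$ for \eqref{dp} together with all the stated estimates i)--vi) and the symmetry identity \eqref{1226.eq1a}. There is no genuine obstacle here; the content lies entirely in Theorems \ref{1226.thm1} and \ref{0110.thm1}, and the only subtlety to watch is the scaling argument of Remark \ref{160907@rem1}, which is needed to convert the local VMO-based H\"older estimate into the scale-invariant form demanded by $(\bf{A1})$.
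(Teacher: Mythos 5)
Your proposal is correct and follows exactly the route the paper intends: the paper presents this corollary as an immediate consequence of Theorem \ref{1226.thm1} together with Theorem \ref{0110.thm1} and the rescaling observation of Remark \ref{160907@rem1}, which is precisely how you verify $(\bf{A1})$ (for both $\sL$ and $\sL^*$) before invoking the existence theorem. No gaps.
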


%=========================================================
\subsection{Global estimate of the Green function}		\label{0110.sec2}
%=========================================================
We impose the following assumption to obtain the global pointwise estimate for the Green function.

\begin{A2}
There exists a constant $A_2>0$ such that if $(\vec u, p)\in W^{1,2}_0(\Omega)^n\times L^2_0(\Omega)$ satisfies
\begin{equation}		\label{170418@eq1}		
\left\{
\begin{aligned}
\sL\vec u+Dp=\vec f \quad \text{in }\, \Omega,\\
\dv \vec u=0 \quad \text{in }\,\Omega,
\end{aligned}
\right.
\end{equation}
where $\vec f\in L^\infty(\Omega)^n$, then $\vec u\in L^\infty(\Omega)^n$ with the estimate
$$
\|\vec u\|_{L^\infty(\Omega_{R/2}(x_0))}\le A_2\left(R^{-n/2}\|\vec u\|_{L^2(\Omega_R(x_0))}+R^2\|\vec f\|_{L^\infty(\Omega_R(x_0))}\right)
$$
for any $x_0\in \Omega$ and $0<R<\diam\Omega$.
The statement is valid, provided that $\sL$ is replaced by $\sL^*$.
\end{A2}

\begin{theorem}		\label{1226.thm2}
Let $\Omega$ be a  domain in $\bR^n$ with $\diam(\Omega)\le K_0$, where $n\ge 3$.
Assume conditions $(\bf{A0})$, $(\bf{A1})$, and $(\bf{A2})$.
Let $(\vec G(x,y),\vec \Pi(x,y))$ be the Green function for $\eqref{dp}$ in $\Omega$ as constructed in Theorem \ref{1226.thm1}.
Then we have  the  global pointwise estimate for $\vec G(x,y)$:
\begin{equation}		\label{0109.eq1}
\abs{\vec G(x,y)}\le C\abs{x-y}^{2-n}, \quad \forall x,\,y\in \Omega, \quad x\neq y,
\end{equation}
where $C=C(n,\lambda,K_0,K_1, R_1,A_2)$.
\end{theorem}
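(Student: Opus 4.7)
The strategy is to combine the interior pointwise bound from Theorem~\ref{1226.thm1} with the boundary-adapted local boundedness property $(\textbf{A2})$, via a limiting argument on regularized Green functions. Theorem~\ref{1226.thm1} itself yields $|\vec G(x,y)|\le C|x-y|^{2-n}$ when $|x-y|<d_y/2$, and the symmetry $\vec G(x,y)=\vec G^*(y,x)^{\tr}$ together with the analogous bound for $\vec G^*$ extends this to $|x-y|<d_x/2$; it thus suffices to treat the boundary regime $R:=|x_0-y_0|\ge\max(d_{x_0},d_{y_0})/2$. By a further use of the symmetry I may assume $d_{x_0}\ge d_{y_0}$, and I will concentrate on the main subcase $d_{x_0}\ge 2R/3$, in which estimate~(i) of Theorem~\ref{1226.thm1} applies to $\vec G^*$ with radius $2R/3\le d_{x_0}$; the complementary subcase $d_{x_0}<2R/3$ (both poles very close to $\partial\Omega$) can be reduced to this one by a chaining argument through an interior auxiliary pole.

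Following the construction of the Green function, for each unit vector $\vec e_k$ and each small $\epsilon>0$ let $(\vec u_\epsilon,p_\epsilon)\in W^{1,2}_0(\Omega)^n\times L^2_0(\Omega)$ be the $L^2$-weak solution of $\eqref{dps}$ with $\vec f=\epsilon^{-n}\chi_{B_\epsilon(x_0)}\vec e_k$ and $\vec f_\alpha=g=0$; by the Green-function construction, $(\vec u_\epsilon,p_\epsilon)\to(\vec G^*(\cdot,x_0)\vec e_k,\Pi^{*k}(\cdot,x_0))$ in suitable norms as $\epsilon\to 0$. Take $r=R/3$, so that for $\epsilon<2R/3$ the set $B_\epsilon(x_0)$ is disjoint from $\Omega_r(y_0)$. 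Applying $(\textbf{A2})$ for $\sL^*$ at $y_0$ with radius $r$ (the $\vec f$- and $g$-contributions vanishing) gives
\[
\|\vec u_\epsilon\|_{L^\infty(\Omega_{r/2}(y_0))}\le A_2\bigl(r^{-n/2}\|\vec u_\epsilon\|_{L^2(\Omega_r(y_0))}+r^{1-n+n/t}\|p_\epsilon\|_{L^{t/(t-1)}(\Omega_r(y_0))}\bigr),
\]
and passing to $\epsilon\to 0$ (invoking the interior H\"older continuity of $\vec G^*(\cdot,x_0)$ away from $x_0$ from $(\textbf{A1})$ to legitimize pointwise evaluation at $y_0$) reduces the theorem to bounding each right-hand side by $CR^{2-n}$.

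The velocity term is handled as follows: $\Omega_r(y_0)\subset\Omega\setminus B_{2R/3}(x_0)$, so Theorem~\ref{1226.thm1}(i) for $\vec G^*$ gives $\|\vec G^*(\cdot,x_0)\|_{L^{2n/(n-2)}(\Omega_r(y_0))}\le CR^{(2-n)/2}$, and H\"older's inequality with exponent $1/n$ then yields $r^{-n/2}\|\vec G^*(\cdot,x_0)\|_{L^2(\Omega_r(y_0))}\le CR^{2-n}$. The pressure term is the main obstacle. On $\Omega_r(y_0)$ the pair $(\vec G^*(\cdot,x_0)\vec e_k,\Pi^{*k}(\cdot,x_0))$ solves the homogeneous Stokes system, so a standard local Bogovskii-type estimate provides a constant $c\in\bR$ with
\[
\|\Pi^{*k}(\cdot,x_0)-c\|_{L^2(\Omega_{r/2}(y_0))}\le C\|D\vec G^*(\cdot,x_0)\|_{L^2(\Omega_r(y_0))}\le CR^{(2-n)/2},
\]
the last inequality by Theorem~\ref{1226.thm1}(i). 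H\"older's inequality (valid because $t>n\ge 3$ forces $t/(t-1)<2$) converts this into $\|\Pi^{*k}(\cdot,x_0)-c\|_{L^{t/(t-1)}(\Omega_{r/2}(y_0))}\le CR^{1-n/t}$, which multiplied by $r^{1-n+n/t}$ produces the desired $CR^{2-n}$. The remaining difficulty is that $(\textbf{A2})$ involves $\|p\|_{L^{t/(t-1)}}$ rather than $\|p-c\|_{L^{t/(t-1)}}$; this gap can be bridged either by inspecting the proof of $(\textbf{A2})$ (whose estimate in fact only uses $p$ up to an additive constant, since the PDE is invariant under $p\mapsto p-c$), or by replacing $(\vec u_\epsilon,p_\epsilon)$ with a cutoff--Bogovskii modification supported away from $x_0$, in which the pressure variable automatically carries the favorable local normalization.
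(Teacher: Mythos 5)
Your outline captures the right general idea (apply $(\bf{A2})$ at the pole far from the singularity and control the velocity and pressure norms that appear on its right-hand side), but two steps are genuine gaps, and both are exactly where the paper's proof does something you have not reproduced. First, the case split: your main argument needs estimate (i) of Theorem \ref{1226.thm1} for $\vec G^*(\cdot,x_0)$ at radius $2R/3$, which requires $2R/3\le d_{x_0}$, and you dismiss the complementary case (both poles within $O(R)$ of $\partial\Omega$) with ``a chaining argument through an interior auxiliary pole.'' There is no such argument: the Green function satisfies no triangle-type inequality, and this boundary--boundary configuration is precisely the content of the theorem (the interior configurations are already covered by Theorem \ref{1226.thm1}). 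The paper avoids the split entirely by proving, for \emph{every} $R\in(0,\diam\Omega)$ with no restriction relative to $d_y$, the duality estimates $\|\vec G_\epsilon(\cdot,y)\|_{L^1(\Omega_R(y))}\le CR^2$ and $\|\vec\Pi_\epsilon(\cdot,y)\|_{L^{t/(t-1)}(\Omega_R(y))}\le CR^{1-n/t}$: one tests against arbitrary $\vec f\in L^\infty(\Omega_R(y))$ and $g\in L^t(\Omega_R(y))$, solves the adjoint problem with data $(\vec f, g-(g)_\Omega)$, and applies $(\bf{A2})$ together with the global $L^2$ estimate to the resulting solution. This duality step is the heart of the proof and is absent from your proposal.

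Second, the pressure term. $(\bf{A2})$ is a hypothesis about pairs $(\vec u,p)\in W^{1,2}_0(\Omega)^n\times L^2_0(\Omega)$, so the norm $\|p\|_{L^{t/(t-1)}(\Omega_R(x_0))}$ it demands is that of the \emph{globally normalized} pressure; your Caccioppoli--Bogovskii bound controls only $\|\Pi^{*k}(\cdot,x_0)-c\|$ for a locally chosen constant $c$, which may be large. Neither of your proposed fixes closes this: you cannot ``inspect the proof of $(\bf{A2})$'' because it is an abstract assumption here, and replacing $p$ by $p-c$ takes you outside $L^2_0(\Omega)$, so $(\bf{A2})$ as stated no longer applies; the cutoff--Bogovskii alternative is not developed. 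In the paper the issue is resolved inside the same duality argument: since $\int_\Omega\Pi^k_\epsilon(\cdot,y)\,dx=0$, the term $\int_\Omega\Pi^k_\epsilon(g-(g)_\Omega)$ equals $\int_{\Omega_R(y)}\Pi^k_\epsilon\,g$, and the resulting bound is on the genuinely normalized pressure restricted to $\Omega_R(y)$, which is exactly what the $(\bf{A2})$-type local boundedness estimate \eqref{160906@eq4} for $\vec G_\epsilon(\cdot,y)$ requires. You should rework the proof around these two duality estimates rather than around Theorem \ref{1226.thm1}(i).
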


From the global $L^q$-estimates for the Stokes systems in Section \ref{sec_es}, we obtain an example of the condition $(\bf{A2})$ in the theorem below.
The proof of the theorem follows a standard localization argument; see Section \ref{0304.sec1} for the details.
Similar results for elliptic systems are given for the Dirichlet problem in \cite{MR2718661} and for the Neumann problem in \cite{MR3105752}.

\begin{theorem}		\label{0110.thm2}
Let $\Omega$ be a domain  in $\bR^n$ with $\diam(\Omega)\le K_0$, where $n\ge 3$.
Assume the condition $(\bf{A0})$ with a sufficiently small $K_1$, depending only on $n$ and $\lambda$.
If the coefficients of $\sL$ belong to the class of $\VMO$,
then the condition $(\bf{A2})$ holds with the constant $A_2$ depending only on $n$, $\lambda$, $K_0$, $R_1$, and the $\VMO$ modulus of the coefficients.
\end{theorem}

By combining Theorems \ref{1226.thm2} and \ref{0110.thm2}, we immediately obtain the following result.

\begin{corollary}		
Let $\Omega$ be a bounded $C^1$ domain  in $\bR^n$, where $n\ge 3$.
Suppose that the coefficients of $\sL$ belong to the class of $\VMO$.
Then there exists the Green function for $\eqref{dp}$ and it satisfies the global pointwise estimate \eqref{0109.eq1}.
\end{corollary}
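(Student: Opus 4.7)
The plan is to verify that the two structural hypotheses $(\bf{A1})$ and $(\bf{A2})$ required to invoke Theorem~\ref{1226.thm2} are met under the standing assumptions of the corollary, and then simply quote that theorem. The statement is billed as an ``immediate consequence'', so essentially no new work is needed; the task is purely bookkeeping of hypotheses.

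First I would observe that a bounded $C^1$ domain is in particular a bounded Lipschitz domain, so the geometric hypothesis of Theorem~\ref{1226.thm2} is fulfilled. Next, because the coefficients $A_{\alpha\beta}$ are assumed to lie in $\VMO$, Theorem~\ref{0110.thm1} together with Remark~\ref{160907@rem1} (applied with $\mu$ any fixed exponent in $(0,1)$ and $c=\diam\Omega/R_0$) shows that condition $(\bf{A1})$ is satisfied with a constant $A_1$ depending only on $n$, $\lambda$, $\diam\Omega$, $\mu$, and the $\VMO$ modulus of the coefficients. Consequently, Theorem~\ref{1226.thm1} produces a Green function $(\vec G(x,y),\vec\Pi(x,y))$ for \eqref{dp}.

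Second, I would invoke Theorem~\ref{0110.thm2}: since $\Omega$ is a bounded $C^1$ domain and the coefficients are $\VMO$, for any chosen $t>n$ (take, for instance, $t=n+1$) the local boundedness condition $(\bf{A2})$ holds with a constant $A_2$ depending on $n$, $\lambda$, $\Omega$, $t$, and the $\VMO$ modulus. At this point both $(\bf{A1})$ and $(\bf{A2})$ are in force, so Theorem~\ref{1226.thm2} applies directly to the Green function constructed in the previous step and delivers
\[
\abs{\vec G(x,y)}\le C\abs{x-y}^{2-n}, \qquad \forall x,y\in \Omega,\ x\neq y,
\]
which is exactly \eqref{0109.eq1}. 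This completes the proof.

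Since the corollary is a packaging result, there is no genuine obstacle; the only minor point to be careful about is that the constants in Theorems~\ref{0110.thm1} and \ref{0110.thm2} are stated in slightly different forms, and one has to note via Remark~\ref{160907@rem1} that the scale-localized H\"older estimate of Theorem~\ref{0110.thm1} can be promoted to the scale-uniform form required by $(\bf{A1})$ at the price of letting $A_1$ depend on $\diam\Omega$. Once that observation is recorded, both hypotheses of Theorem~\ref{1226.thm2} are in hand and the conclusion follows verbatim.
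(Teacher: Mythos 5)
Your proposal is correct and follows exactly the paper's route: the corollary is obtained by combining Theorem \ref{0110.thm1} (with Remark \ref{160907@rem1}) to secure $(\bf{A1})$ and hence existence via Theorem \ref{1226.thm1}, and Theorem \ref{0110.thm2} to secure $(\bf{A2})$, after which Theorem \ref{1226.thm2} yields \eqref{0109.eq1}. Nothing is missing.
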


%=========================================================
\section{Some auxiliary results}
%=========================================================

%=========================================================
\subsection{$L^2$-solvability}		\label{1006@sec2}
%=========================================================
In this subsection, we consider the existence theorem for weak solutions of the Stokes system with measurable coefficients.
For the solvability of the Stokes system, we impose the following condition.

\begin{D}
Let $\Omega$ be a bounded domain in $\bR^n$, where $n\ge 2$.
There exist a  linear operator $B:L^2_0(\Omega)\to W^{1,2}_0(\Omega)^n$ and a constant $A>0$ such that 
\[
\dv Bg=g\, \text{ in }\, \Omega \quad \text{and} \quad \norm{Bg}_{W^{1,2}_0(\Omega)}\le A\norm{g}_{L^2(\Omega)}.
\]
\end{D}

\begin{remark}		\label{K0122.rmk2}
It is well known that if $\Omega$ is a Lipschitz domain  with $\diam(\Omega)\le K_0$, which satisfies the condition $(\bf{A0})$, then for any $1<q<\infty$, there exists a bounded linear operator $B_q:L^q_0(\Omega)\to W^{1,q}_0(\Omega)^n$ such that 
\[
\dv B_q g=g \,\text{ in }\,\Omega, \quad \norm{D(B_q g)}_{L^q(\Omega)}\le C\norm{g}_{L^q(\Omega)},
\]
where the constant $C$ depends only on $n$, $q$,  $K_0$, $K_1$, and $R_1$; see e.g.,  \cite{MR2263708}.
We point out that if $\Omega=B_R(x)$ or $\Omega=B^+_R(x)$, then   
\begin{equation}		\label{0123.eq1a}
\norm{D(B_q g)}_{L^q(\Omega)}\le C\norm{g}_{L^q(\Omega)},
\end{equation}
where $C=C(n,q)$.
\end{remark}

\begin{lemma}		\label{122.lem1}
Assume the condition $(\bf{D})$.
Let
$$
q=\frac{2n}{n+2} \quad \text{if }\, n\ge3 \quad \text{and} \quad q=2 \quad \text{if }\,n=2.
$$
For $\vec f\in L^q(\Omega)^n$, $\vec f_\alpha\in L^2(\Omega)^n$, and $g\in L^2_0(\Omega)$, there exists a unique solution $(\vec u,p)\in W^{1,2}_0(\Omega)^n\times L^2_0(\Omega)$ of the problem 
\begin{equation}		\label{0204.eq2}		
\left\{
\begin{aligned}
\sL \vec u+D p=\vec f+D_\alpha\vec f_\alpha &\quad \text{in }\, \Omega,\\
\dv \vec u=g &\quad \text{in }\, \Omega.
\end{aligned}
\right.
\end{equation}
Moreover, we have  
\begin{equation}		\label{1227.eq1}
\norm{p}_{L^2(\Omega)}+\norm{D\vec u}_{L^2(\Omega)}\le C\left(\norm{\vec f}_{L^{q}(\Omega)}+\norm{\vec f_\alpha}_{L^2(\Omega)}+\norm{g}_{L^2(\Omega)}\right),
\end{equation}
where $C=C(n,\lambda,A)$ if $n\ge 3$ and $C=C(\lambda,A,|\Omega|)$ if $n=2$.
In the case when  $\Omega=B_R(x)$ or $\Omega=B^+_R(x)$, if $\vec f\in L^2(\Omega)^n$, then we have
\begin{equation}		\label{122.eq1a}
\norm{p}_{L^2(\Omega)}+\norm{D\vec u}_{L^2(\Omega)}\le C'\left(R\norm{\vec f}_{L^{2}(\Omega)}+\norm{\vec f_\alpha}_{L^2(\Omega)}+\norm{g}_{L^2(\Omega)}\right),
\end{equation}
where $C'=C'(n,\lambda)$.
\end{lemma}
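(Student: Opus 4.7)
\textbf{Proof plan for Lemma \ref{122.lem1}.}
The plan is to follow the standard two-step strategy: first reduce to the divergence-free situation, then apply Lax--Milgram, and finally recover the pressure via the right inverse of the divergence provided by condition $(\bf{D})$. Set $\vec w = Bg \in W^{1,2}_0(\Omega)^n$, so that $\dv \vec w = g$ and $\norm{D\vec w}_{L^2(\Omega)} \le A \norm{g}_{L^2(\Omega)}$. Writing $\vec u = \vec v + \vec w$, the problem \eqref{0204.eq2} is equivalent to finding $\vec v \in \vec J := \{\vec \varphi \in W^{1,2}_0(\Omega)^n : \dv \vec \varphi = 0\}$ and $p\in L^2_0(\Omega)$ such that, for all $\vec \varphi \in W^{1,2}_0(\Omega)^n$,
\[
a(\vec v, \vec \varphi) - \int_\Omega p \dv \vec\varphi \, dx = \int_\Omega \vec f\cdot\vec\varphi\,dx - \int_\Omega \vec f_\alpha\cdot D_\alpha\vec\varphi\,dx - a(\vec w, \vec \varphi),
\]
where $a(\vec v, \vec \varphi) := \int_\Omega A_{\alpha\beta}D_\beta \vec v \cdot D_\alpha \vec \varphi\,dx$. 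On the closed subspace $\vec J$ of $W^{1,2}_0(\Omega)^n$, the bilinear form $a$ is bounded by \eqref{ubc} and coercive by the strong ellipticity combined with Poincar\'e's inequality, so Lax--Milgram yields a unique $\vec v \in \vec J$ solving the above identity for every $\vec \varphi \in \vec J$ (in which case the pressure term is absent).

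Next I would recover the pressure. Consider the linear functional
\[
\Lambda(\vec \varphi) := a(\vec v+\vec w, \vec \varphi) - \int_\Omega \vec f\cdot\vec\varphi\,dx + \int_\Omega \vec f_\alpha\cdot D_\alpha\vec\varphi\,dx, \qquad \vec\varphi \in W^{1,2}_0(\Omega)^n,
\]
which vanishes on $\vec J$. Define $\ell : L^2_0(\Omega) \to \bR$ by $\ell(h) := \Lambda(Bh)$; condition $(\bf{D})$ together with Sobolev embedding ($q = 2n/(n+2)$ is the conjugate of $2n/(n-2)$) gives the estimate $\abs{\ell(h)} \le C(\norm{\vec f}_{L^q} + \norm{\vec f_\alpha}_{L^2} + \norm{g}_{L^2})\norm{h}_{L^2}$, so by Riesz representation there is $p \in L^2_0(\Omega)$ with $\ell(h) = \int_\Omega p h\,dx$ and $\norm{p}_{L^2(\Omega)}$ controlled by the desired quantity. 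For an arbitrary $\vec \varphi \in W^{1,2}_0(\Omega)^n$, decompose $\vec \varphi = B(\dv \vec \varphi) + \bigl(\vec \varphi - B(\dv\vec\varphi)\bigr)$; the second summand lies in $\vec J$, so $\Lambda(\vec\varphi) = \Lambda(B(\dv\vec\varphi)) = \int_\Omega p\dv\vec\varphi\,dx$, which is precisely the weak formulation of \eqref{0204.eq2}. Uniqueness follows by testing a solution of the homogeneous problem against $\vec v$ itself (killing the pressure) and then using the recovered pressure together with $(\bf{D})$.

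For the estimate \eqref{1227.eq1} I would test with $\vec \varphi = \vec v$ in the equation on $\vec J$; strong ellipticity gives $\lambda\norm{D\vec v}_{L^2}^2$ on the left, and on the right I bound $\int \vec f \cdot \vec v$ by H\"older and Sobolev ($\vec v \in L^{2n/(n-2)}$), $\int \vec f_\alpha \cdot D_\alpha \vec v$ by Cauchy--Schwarz, and $a(\vec w,\vec v)$ by $\lambda^{-1}\norm{D\vec w}_{L^2}\norm{D\vec v}_{L^2}$, using condition $(\bf{D})$ for $\vec w$. Combined with the earlier bound on $\norm{p}_{L^2}$ this yields \eqref{1227.eq1}.

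For the scale-invariant estimate \eqref{122.eq1a} on $\Omega = B_R(x)$ or $B^+_R(x)$, I would repeat the argument using the Bogovskii operator $B_2$ from Remark \ref{K0122.rmk2}, whose norm $C(n,2)$ is independent of $R$ by the scaling invariance \eqref{0123.eq1a}. The Sobolev and Poincar\'e constants on such domains are also dimension-only (with the linear-in-$R$ Poincar\'e factor), so the only term picking up a power of $R$ is $\int \vec f \cdot \vec v$, which is estimated via Poincar\'e by $CR\norm{\vec f}_{L^2}\norm{D\vec v}_{L^2}$ when $\vec f \in L^2$. The main technical point is the pressure recovery via the Bogovski\u\i\ operator; once that is set up the rest is routine Hilbert-space functional analysis combined with Sobolev/Poincar\'e inequalities.
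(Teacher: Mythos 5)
Your proposal is correct and follows essentially the same route as the paper: reduce to the divergence-free subspace via the right inverse $B$ from condition $(\bf{D})$, solve by Lax--Milgram, recover $p$ by Riesz representation applied to the functional that vanishes on divergence-free test fields, and obtain the scale-invariant bound on $B_R(x)$, $B_R^+(x)$ from the $R$-independent Bogovski\u{\i} norm \eqref{0123.eq1a} and the Poincar\'e inequality with constant $CR$. The only cosmetic difference is that the paper works with the projected operator $\cB = P\circ B$ onto $H^\bot(\Omega)$ and uses $\cB(L^2_0(\Omega))=H^\bot(\Omega)$, whereas you use $B$ directly together with the decomposition $\vec\varphi = B(\dv\vec\varphi)+\bigl(\vec\varphi - B(\dv\vec\varphi)\bigr)$; these are equivalent.
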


\begin{proof} 
We mainly follow the argument given by Maz'ya-Rossmann \cite[Theorem 5.2]{MR2321139}.
Also see \cite[Theorem 3.1]{MR3320459}.
Let $H(\Omega)$ be the Hilbert space consisting of functions $\vec u\in W^{1,2}_0(\Omega)^n$ such that $\dv \vec u=0$ and $H^\bot(\Omega)$ be orthogonal complement of $H(\Omega)$ in $W^{1,2}_0(\Omega)^n$.
We also define $P$ as the orthogonal projection from $W^{1,2}_0(\Omega)^n$ onto $H^\bot(\Omega)$.
Then, one can easily show that the operator $\cB=P\circ B:L^2_0(\Omega)\to H^\bot(\Omega)$ is bijective.
Moreover, we obtain for $g\in L^2_0(\Omega)$ that 
\begin{equation}		\label{0123.eq1}
\dv \cB g=g \,\text{ in }\,\Omega, \quad \norm{\cB g}_{W^{1,2}(\Omega)}\le A\norm{g}_{L^2(\Omega)}.
\end{equation}

Now, let $\vec f,\, \vec f_\alpha\in L^2(\Omega)^n$ and $g\in L^2_0(\Omega)$.
Then from the above argument, there exists a unique function $\vec w:=\cB g\in H^\bot(\Omega)$ such that  \eqref{0123.eq1}  is satisfied.
Also, by the Lax-Milgram theorem, one can find the function  $\vec v\in H(\Omega)$ that satisfies 
\begin{equation*}		
\int_\Omega A_{\alpha\beta}D_\beta \vec v\cdot D_\alpha \vec \varphi\,dx=\int_\Omega \vec f\cdot \vec \varphi\,dx-\int_\Omega \vec f_\alpha \cdot D_\alpha \vec \varphi\,dx-\int_\Omega A_{\alpha\beta}D_\beta \vec w\cdot D_\alpha \vec \varphi\,dx
\end{equation*}
for all $\vec \varphi\in H(\Omega)$.
By setting $\vec \varphi=\vec v$ in the above identity, and then, using H\"older's inequality and the Sobolev inequality, we have  
\[
\norm{D\vec v}_{L^2(\Omega)}\le C\left(\norm{\vec f}_{L^q(\Omega)}+\norm{\vec f_\alpha}_{L^2(\Omega)}+\norm{D\vec w}_{L^2(\Omega)}\right),
\]
where $q=2$ if $n=2$ and $q=2n/(n+2)$ if $n\ge 3$.
Therefore, the function $\vec u=\vec v+\vec w$ satisfies $\dv \vec u= g$ in $\Omega$ and the following identity:
\begin{equation}		\label{1227.eq1b}
\int_\Omega A_{\alpha\beta}D_\beta \vec u\cdot D_\alpha \vec \varphi\,dx=\int_\Omega \vec f\cdot \vec \varphi\,dx-\int_\Omega \vec f_\alpha \cdot D_\alpha \vec \varphi\,dx, \quad \forall \vec \varphi\in H(\Omega).
\end{equation}
Moreover, we have 
\begin{equation}		\label{1227.eq2a}
\norm{D\vec u}_{L^2(\Omega)}\le C\left(\norm{\vec f}_{L^{q}(\Omega)}+\norm{\vec f_\alpha}_{L^2(\Omega)}+\norm{g}_{L^2(\Omega)}\right).
\end{equation}
To find $p$, we let 
\[
\ell(\phi)=\int_\Omega A_{\alpha\beta}D_\beta \vec u\cdot D_\alpha (\cB\tilde{\phi})\,dx-\int_\Omega \vec f\cdot \cB\tilde{\phi}\,dx+\int_\Omega \vec f_\alpha \cdot D_\alpha(\cB\tilde{\phi})\,dx,
\]
where $\phi\in L^2(\Omega)$ and $\tilde{\phi}=\phi-(\phi)_\Omega\in L^2_0(\Omega)$.
Since 
\[
\norm{\cB\tilde{\phi}}_{W^{1,2}(\Omega)}\le A\norm{\tilde{\phi}}_{L^2(\Omega)}\le C(n,A)\norm{\phi}_{L^2(\Omega)},
\]
$\ell$ is a bounded linear functional on $L^2(\Omega)$.
Therefore, there exists a function $p_0\in L^2(\Omega)$ so that 
\[
\int_\Omega p_0 \tilde{\phi}\,dx=\ell (\tilde\phi), \quad \forall \tilde{\phi}\in L^2_0(\Omega),
\]
and thus, $p=p_0-(p_0)_\Omega\in L^2_0(\Omega)$ also satisfies the above identity.
Then by using the fact that $\cB(L^2_0(\Omega))=H^\bot(\Omega)$, we  obtain 
\begin{equation}		\label{122.eq1}
\int_\Omega A_{\alpha\beta}D_\beta \vec u\cdot D_\alpha \vec \varphi\,dx-\int_\Omega p \dv \vec \varphi\,dx=\int_\Omega \vec f \cdot \vec \varphi\,dx-\int_\Omega \vec f_\alpha \cdot D_\alpha \vec \varphi\,dx
\end{equation}
for all $\vec \varphi\in H^\bot(\Omega)$.
From \eqref{1227.eq1b} and \eqref{122.eq1}, we find that $(\vec u,p)$ is the weak solution of the problem \eqref{0204.eq2}.
Moreover, by setting  $\vec \varphi=\cB p$ in \eqref{122.eq1}, we have 
\begin{equation*}
\norm{p}_{L^2(\Omega)}\le C\left(\norm{D\vec u}_{L^2(\Omega)}+\norm{\vec f}_{L^{q}(\Omega)}+\norm{\vec f_\alpha}_{L^2(\Omega)}\right),
\end{equation*}
and thus, we get \eqref{1227.eq1} from \eqref{1227.eq2a}.

To establish \eqref{122.eq1a}, we observe that  
\[
\norm{u}_{L^2(\Omega)}\le C(n)R\norm{Du}_{L^2(\Omega)}, \quad \forall u\in W^{1,2}_0(\Omega), 
\]
provided that $\Omega=B_R(x)$ or $\Omega=B_R^+(x)$.
By using the above inequality and \eqref{0123.eq1a}, and following the same argument as above, one can easily show that the estimate \eqref{122.eq1a} holds.
The lemma is proved.
\end{proof}

%=========================================================
\subsection{Interior estimates}
%=========================================================
In this subsection we derive some interior estimates of $\vec u$ and $p$.
We start with the following Caccioppoli type inequality that can be found, for instance, in \cite{arXiv:1604.02690v2,MR0641818}.

\begin{lemma}		\label{1006@lem1}
Assume that $(\vec u,p)\in W^{1,2}(B_R(x_0))^n\times L^2(B_R(x_0))$ satisfies 
$$
\left\{
\begin{aligned}
\sL\vec u+Dp=0 &\quad \text{in }\, B_R(x_0),\\
\dv \vec u=0 &\quad \text{in }\, B_R(x_0),
\end{aligned}
\right.
$$
where $x_0\in \bR^n$ and $R>0$.
Then we have 
$$
\int_{B_{R/2}(x_0)}\bigabs{p-(p)_{B_{R/2}(x_0)}}^2\,dx+\int_{B_{R/2}(x_0)}\abs{D\vec u}^2\,dx\le CR^{-2}\int_{B_R(x_0)}\abs{\vec u}^2\,dx,
$$
where $C=C(n,\lambda)$.
\end{lemma}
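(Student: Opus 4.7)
The approach is the classical Caccioppoli argument adapted to the Stokes system, using a Bogovski\u{\i} correction to construct divergence-free test functions that kill the pressure term. We may take $x_0 = 0$ and write $B_r = B_r(0)$.

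\textbf{Step 1 (gradient estimate via iteration on radii).} For $R/2 \le r < s \le R$ I would pick a cutoff $\eta \in C^\infty_0(B_s)$ with $\eta \equiv 1$ on $B_r$, $0 \le \eta \le 1$, and $\abs{D\eta} \le C/(s-r)$. Since $\dv\vec u = 0$, the function $g := \dv(\eta^2\vec u) = 2\eta\, (D_\alpha\eta)\, u^\alpha$ lies in $L^2_0(B_s)$. By Remark \ref{K0122.rmk2} (scale-invariant estimate \eqref{0123.eq1a}), there exists $\vec w \in W^{1,2}_0(B_s)^n$ with $\dv\vec w = g$ and
\[
\norm{D\vec w}_{L^2(B_s)} \le C\norm{g}_{L^2(B_s)} \le \frac{C}{s-r}\norm{\vec u}_{L^2(B_R)}.
\]
Extending $\vec w$ by zero, the test function $\vec\varphi := \eta^2\vec u - \vec w$ lies in $W^{1,2}_0(B_R)^n$ and is divergence-free, so that the pressure term drops out when tested against \eqref{160907@eq2} and one is left with
\[
\int_{B_R} A_{\alpha\beta}D_\beta \vec u\cdot D_\alpha(\eta^2\vec u)\,dx = \int_{B_R} A_{\alpha\beta}D_\beta \vec u\cdot D_\alpha \vec w\,dx.
\]
Expanding $D_\alpha(\eta^2\vec u) = \eta^2 D_\alpha\vec u + 2\eta(D_\alpha\eta)\vec u$, invoking ellipticity \eqref{ubc} on the principal term, and using Young's inequality on the cross term and the Bogovski\u{\i} term, one arrives at the Caccioppoli inequality
\[
\int_{B_r}\abs{D\vec u}^2\,dx \le \tfrac{1}{2}\int_{B_s}\abs{D\vec u}^2\,dx + \frac{C}{(s-r)^2}\int_{B_R}\abs{\vec u}^2\,dx,
\]
valid for all $R/2 \le r < s \le R$. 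The standard iteration lemma (e.g.\ Giaquinta's hole-filling lemma) then yields $\norm{D\vec u}_{L^2(B_{R/2})}^2 \le CR^{-2}\norm{\vec u}_{L^2(B_R)}^2$.

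\textbf{Step 2 (pressure estimate).} Since $p - (p)_{B_{R/2}} \in L^2_0(B_{R/2})$, Remark \ref{K0122.rmk2} furnishes $\vec\psi \in W^{1,2}_0(B_{R/2})^n$ with $\dv\vec\psi = p - (p)_{B_{R/2}}$ and $\norm{D\vec\psi}_{L^2(B_{R/2})} \le C\norm{p-(p)_{B_{R/2}}}_{L^2(B_{R/2})}$. Extending by zero and inserting $\vec\psi$ into the weak formulation gives
\[
\int_{B_{R/2}}\bigabs{p-(p)_{B_{R/2}}}^2\,dx = \int_{B_R} p\,\dv\vec\psi\,dx = \int_{B_{R/2}} A_{\alpha\beta}D_\beta\vec u\cdot D_\alpha\vec\psi\,dx.
\]
Cauchy--Schwarz, the bound on $\norm{D\vec\psi}_{L^2(B_{R/2})}$, and the result of Step 1 yield $\norm{p-(p)_{B_{R/2}}}_{L^2(B_{R/2})} \le C\norm{D\vec u}_{L^2(B_{R/2})} \le CR^{-1}\norm{\vec u}_{L^2(B_R)}$, which combined with Step 1 proves the lemma.

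\textbf{Main obstacle.} The only delicate point is engineering the test function so that it is simultaneously divergence-free and has the right support/scaling; the Bogovski\u{\i} correction on $B_s$ (rather than on a thin annulus, where the constant can blow up) together with the $R$-independent estimate \eqref{0123.eq1a} is what lets the iteration close with the correct $(s-r)^{-2}$ factor. Once Step 1 is in place, the pressure bound follows routinely from a single further application of the Bogovski\u{\i} operator.
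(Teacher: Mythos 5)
Your proof is correct, and it handles the pressure term by a genuinely different device than the paper. The paper first derives the pressure bound $\norm{p-(p)_{B_r}}_{L^2(B_r)}\le C_1\norm{D\vec u}_{L^2(B_r)}$ for every $r\in(0,R]$ via the Bogovski\u{\i} operator, and only then tests the system with the plain, non-solenoidal cutoff $\eta^2\vec u$; the resulting pressure contribution $2\int (p-(p)_{B_{\rho_2}})\eta D\eta\cdot\vec u$ is handled by Young's inequality and absorbed through the pre-established pressure bound with a small parameter $\delta$, after which a geometric-series iteration over the radii $\rho_k=\tfrac{R}{2}(2-2^{-k})$ closes the gradient estimate. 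You instead correct the test function to be divergence-free, $\vec\varphi=\eta^2\vec u-\vec w$ with $\dv\vec w=\dv(\eta^2\vec u)=2\eta D\eta\cdot\vec u$, so the pressure never enters the gradient estimate, and you recover the pressure bound afterwards exactly as in the paper's first step. Both arguments rest on the same two ingredients --- the scale-invariant Bogovski\u{\i} estimate \eqref{0123.eq1a} on balls (crucially not on a thin annulus, as you note) and an absorption/iteration lemma to remove the term $\tfrac12\int_{B_s}\abs{D\vec u}^2$ (resp.\ $\delta\int_{B_{\rho_2}}\abs{D\vec u}^2$). Your version decouples the velocity and pressure estimates more cleanly at the cost of constructing a solenoidal test function; the paper's version uses the simpler test function but must interleave the two estimates. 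One shared technicality worth keeping in mind: the final absorption step in either route presupposes $\int_{B_R}\abs{D\vec u}^2<\infty$, which one justifies by first proving the estimate on $B_{R'}$ with $R'<R$ and letting $R'\uparrow R$.
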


\begin{proof}
Let $r\in (0, R]$ and denote $B_r=B_r(x_0)$.
By Remark \ref{K0122.rmk2},  there exists $\vec \phi\in W^{1,2}_0(B_r)^n$ such that 
\begin{equation*}		
\dv \vec \phi=p-(p)_{B_{r}} \,\text{ in }\, B_{r}
\end{equation*}
and 
\begin{equation*}
\norm{\vec\phi}_{L^{2n/(n-2)}(B_{r})}\le C\norm{D\vec \phi}_{L^2(B_{r})}\le C\norm{p-(p)_{B_{r}}}_{L^2(B_{r})},
\end{equation*}
where $C=C(n)$.
Since 
\begin{equation}		\label{160831@eq4}
\sL\vec u+D(p-(p)_{B_{r}})=0 \quad \text{in }\, B_r,
\end{equation}
by testing with $\vec \phi$ in \eqref{160831@eq4}, we get  
\begin{equation}		\label{1006@eq1b}
\int_{B_{r}}\abs{p-(p)_{B_{r}}}^2\,dx\le C_1\int_{B_{r}}\abs{D\vec u}^2\,dx, \quad \forall r\in (0,R],
\end{equation}
where $C_1=C_1(n,\lambda)$.
From the above inequality, it remains us to show that 
\begin{equation}		\label{160831@eq4a}
\int_{B_{R/2}}|D\vec u|^2\,dx\le CR^{-2}\int_{B_R}|\vec u|^2\,dx.
\end{equation}
Let $0<\rho_1<\rho_2\le R$ and $\delta\in (0,1)$.
Let $\eta$ be a smooth function on $\bR^d$ such that 
$$
0\le \eta\le 1, \quad \eta=1 \quad \text{in }\, B_{\rho_1}, \quad \supp \eta\subset B_{\rho_2}, \quad |D\eta|\le C(d)(\rho_2-\rho_1)^{-1}.
$$
Then by applying $\eta^2 \vec u$ as a test function to 
$$
\sL\vec u+D(p-(p)_{B_{\rho_2}})=0 \quad \text{in }\, B_R
$$
 and using the fact that $\dv u=0$,  we have 
$$
\int_{B_R}A_{\alpha\beta}\eta D_\beta \vec u\cdot \eta D_\alpha \vec u\,dx=-2\int_{B_R}A_{\alpha\beta}\eta D_\beta \vec u\cdot D_\alpha \eta \vec u\,dx+2\int_{B_R}(p-(p)_{B_{\rho_2}})\eta D\eta \cdot \vec u\,dx,
$$
and thus, by the ellipticity condition, H\"older's inequality, and Young's inequality, we obtain
$$
\int_{B_{\rho_1}}|D\vec u|^2\,dx\le \frac{C_\delta}{(\rho_2-\rho_1)^2}\int_{B_{\rho_2}}|\vec u|^2\,dx+\frac{\delta}{C_1}\int_{B_{\rho_2}}|p-(p)_{B_{\rho_2}}|^2\,dx,
$$
where $C_\delta=C_\delta(n,\lambda,\delta)$, and $C_1$ is the constant in \eqref{1006@eq1b}.
From this together with \eqref{1006@eq1b}, it follows that 
\begin{equation}		\label{160831@eq5a}
\int_{B_{\rho_1}}|D\vec u|^2\,dx\le \frac{C_\delta}{(\rho_2-\rho_1)^2}\int_{B_{\rho_2}}|\vec u|^2\,dx+\delta\int_{B_{\rho_2}}|D\vec u|^2\,dx.
\end{equation}
Let us set
$$
\delta=\frac{1}{8}, \quad \rho_k=\frac{R}{2}\left(2-\frac{1}{2^k}\right), \quad k=0,1,2,\ldots.
$$
Then by \eqref{160831@eq5a}, we have 
$$
\int_{B_{\rho_k}}|D\vec u|^2\,dx\le \frac{C4^k}{R^2}\int_{B_{\rho_{k+1}}}|\vec u|^2\,dx+\delta\int_{B_{\rho_{k+1}}}|D\vec u|^2\,dx, \quad k\in \{0,1,2,\ldots\},
$$
where $C=C(n,\lambda)$.
By multiplying both sides of the above inequality by $\delta^k$ and summing the terms with respect to $k=0,1,\ldots$, we obtain 
$$
\sum_{k=0}^\infty \delta^k\int_{B_{\rho_k}}|D\vec u|^2\,dx\le \frac{C}{R^2}\sum_{k=0}^\infty (4\delta)^k\int_{B_{\rho_{k+1}}}|\vec u|^2\,dx+\sum_{k=1}^\infty \delta^k\int_{B_{\rho_{k}}}|D\vec u|^2\,dx.
$$
By subtracting the last term of the right-hand side in the above inequality, we obtain the desired estimate \eqref{160831@eq4a}.
The lemma is proved.
\end{proof}

\begin{lemma}		\label{1227.lem1}
Assume the condition $(\bf{A1})$.
Let $(\vec u,p)\in W^{1,2}(B_R(x_0))^n\times L^2(B_R(x_0))$ satisfy
\begin{equation}		\label{0302.eq1}
\left\{
\begin{aligned}
\sL\vec u+Dp=0 \quad \text{in }\, B_R(x_0),\\
\dv \vec u=0 \quad \text{in }\, B_R(x_0),
\end{aligned}
\right.
\end{equation}
where $x_0\in \Omega$ and $R\in(0,d_{x_0}]$.
Then we have 
\begin{equation}		\label{0929.eq3}
\int_{B_r(x_0)}\abs{D\vec u}^2\,dx\le C_1\left(\frac{r}{s}\right)^{n-2+2\mu}\int_{B_s(x_0)}\abs{D\vec u}^2\,dx, \quad 0<r<s\le R,
\end{equation}
where $C_1=C_1(n,\lambda,A_1)$.
Moreover, we get 
\begin{equation}		\label{0929=e1}
\norm{\vec u}_{L^\infty(B_{R/2}(x_0))}\le C_2 R^{-n}\norm{\vec u}_{L^1(B_R(x_0))},
\end{equation}
where $C_2=C_2(n,\mu, A_1)$.
The statement is valid, provided that $\sL$ is replaced by $\sL^*$.
\end{lemma}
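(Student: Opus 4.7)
My plan is to exploit the invariance of the homogeneous Stokes system under subtraction of a constant vector field, combining (A1) with the Caccioppoli inequality (Lemma \ref{1006@lem1}) to transfer H\"older control of $\vec u$ into $L^2$-decay of $D\vec u$, and then to run a Moser-type iteration to pass from an $L^2$ bound to the $L^\infty$ bound.

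\textbf{Step 1: Decay of $\int \abs{D\vec u}^2$.} When $s/4 < r \le s$ the estimate \eqref{0929.eq3} is trivial with a constant depending only on $n$ and $\mu$, so I may assume $r \le s/4$. The key observation is that for any constant vector $\vec c \in \bR^n$, the pair $(\vec u - \vec c, p)$ still solves \eqref{0302.eq1} on $B_R(x_0)$. Applying (A1) with the ball $B_s(x_0)$ to the shifted solution $\vec u - (\vec u)_{B_s(x_0)}$ (whose H\"older seminorm agrees with that of $\vec u$), followed by the Poincar\'e inequality, yields
\[
[\vec u]_{C^\mu(B_{s/2}(x_0))} \le A_1 s^{-\mu}\biggl(\fint_{B_s(x_0)}\bigabs{\vec u-(\vec u)_{B_s(x_0)}}^2\,dx\biggr)^{1/2} \le C\, s^{1-\mu-n/2}\norm{D\vec u}_{L^2(B_s(x_0))}.
\]
Since $B_{2r}(x_0)\subset B_{s/2}(x_0)$, this gives $\abs{\vec u(x)-\vec u(x_0)}\le C (r/s)^\mu s^{1-n/2}\norm{D\vec u}_{L^2(B_s(x_0))}$ for all $x\in B_{2r}(x_0)$. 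Finally, Lemma \ref{1006@lem1} applied to $(\vec u-\vec u(x_0),\,p-(p)_{B_{2r}(x_0)})$ on $B_{2r}(x_0)$ produces
\[
\int_{B_r(x_0)}\abs{D\vec u}^2\,dx \le C r^{-2}\int_{B_{2r}(x_0)}\abs{\vec u-\vec u(x_0)}^2\,dx,
\]
and substituting the pointwise bound above delivers exactly the $(r/s)^{n-2+2\mu}$ decay.

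\textbf{Step 2: $L^\infty$ estimate.} I run a Moser-type iteration on $M(\rho):=\norm{\vec u}_{L^\infty(B_\rho(x_0))}$ for $\rho\in[R/2, R]$. Fix $R/2\le\rho<\sigma\le R$, choose $y\in B_\rho(x_0)$, and set $r=(\sigma-\rho)/2$, so that $B_r(y)\subset B_\sigma(x_0)\subset B_R(x_0)$ and $r\le d_{x_0}-\rho\le d_y$, ensuring (A1) is applicable on $B_r(y)$. Applying (A1) on $B_r(y)$ gives $[\vec u]_{C^\mu(B_{r/2}(y))}\le A_1 r^{-\mu}(\fint_{B_r(y)}\abs{\vec u}^2)^{1/2}$, whence for $x\in B_{r/2}(y)$ the triangle inequality yields $\abs{\vec u(y)}\le\abs{\vec u(x)}+C(\fint_{B_r(y)}\abs{\vec u}^2)^{1/2}$. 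Averaging in $x$ over $B_{r/2}(y)$ and using the interpolation $\norm{\vec u}_{L^2(B_r(y))}^2\le M(\sigma)\norm{\vec u}_{L^1(B_r(y))}$, I arrive at
\[
\abs{\vec u(y)} \le C r^{-n}\norm{\vec u}_{L^1(B_R(x_0))} + C r^{-n/2}M(\sigma)^{1/2}\norm{\vec u}_{L^1(B_R(x_0))}^{1/2}.
\]
Taking the supremum over $y$ and applying Young's inequality with a small parameter $\epsilon$ gives $M(\rho)\le\epsilon M(\sigma) + C_\epsilon(\sigma-\rho)^{-n}\norm{\vec u}_{L^1(B_R(x_0))}$, and a standard iteration lemma (e.g.\ Giaquinta, Ch.\ V, Lemma 5.1) then delivers $M(R/2)\le C R^{-n}\norm{\vec u}_{L^1(B_R(x_0))}$, as required.

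The entire argument is symmetric in $\sL$ and $\sL^*$ since (A1) is imposed on both operators and Lemma \ref{1006@lem1} is insensitive to which operator is used, so the statement for $\sL^*$ follows by the same proof. No serious obstruction arises: the only bookkeeping is choosing the radii so that (A1) is applicable on each ball encountered, which is easily arranged given $R\le d_{x_0}$.
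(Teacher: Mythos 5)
Your proof is correct and follows essentially the same route as the paper: for \eqref{0929.eq3} both arguments combine the Caccioppoli inequality on $B_{2r}(x_0)$ with $(\bf{A1})$ applied on $B_s(x_0)$ and the Poincar\'e inequality, differing only in whether the H\"older seminorm is converted to a pointwise oscillation bound before or after Caccioppoli. For \eqref{0929=e1} you have simply written out in full the standard $L^2$-to-$L^1$ iteration that the paper delegates to the cited pages of Giaquinta's book.
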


\begin{proof}
To prove \eqref{0929.eq3}, we only need to consider the case $0<r\le s/4$.
Also, by replacing $\vec u-(\vec u)_{B_s(x_0)}$ if necessary, we may assume that $(\vec u)_{B_s(x_0)}=0$.
Since $(\vec u-(\vec u)_{B_{2r}(x_0)},p)$ is a weak solution of \eqref{0302.eq1}, we get from Lemma \ref{1006@lem1} that 
\begin{equation*}		
\int_{B_{r}(x_0)}\abs{D\vec u}^2\,dx\le  Cr^{-2}\int_{B_{2r}(x_0)}\abs{\vec u-(\vec u)_{B_{2r}(x_0)}}^2\,dx.
\end{equation*}
By  $(\bf{A1})$, the Poincar\'e inequality, and the above inequality, we have  
\begin{align*}
\int_{B_r(x_0)}\abs{D\vec u}^2\,dx&\le Cr^{n-2+2\mu}[\vec u]^2_{C^{\mu}(B_{2r}(x_0))}\le Cr^{n-2+2\mu}[\vec u]^2_{C^{\mu}(B_{s/2}(x_0))}\\
&\le CA_1^2 r^{n-2+2\mu}s^{-n-2\mu}\int_{B_s(x_0)}\abs{\vec u}^2\,dx\le CA_1^2 \left(\frac{r}{s}\right)^{n-2+2\mu}\int_{B_s(x_0)}\abs{D\vec u}^2\,dx,
\end{align*}
which establishes \eqref{0929.eq3}.

We observe that $(\bf{A1})$ and a well known averaging argument yield
\begin{equation}		\label{0316.eq1}
\norm{\vec u}_{L^\infty(B_{R/2}(x_0))}\le C\left(\fint_{B_R(x_0)}\abs{\vec u}^2\,dx\right)^{1/2},
\end{equation}
for any $R\in (0,d_{x_0}]$, where $C=C(n, \mu, A_1)$.
For the proof that \eqref{0316.eq1} implies \eqref{0929=e1}, we refer to \cite[pp. 80-82]{MR1239172}.
\end{proof}

\begin{lemma}		
Let $\Omega$ be a  domain in $\bR^n$ with $\diam(\Omega)\le K_0$, where $n\ge 3$.
Assume  conditions $(\bf{A0})$ and  $(\bf{A1})$.
Let $(\vec u,p)\in W^{1,2}_0(\Omega)^n \times L^2_0(\Omega)$ be a solution of the problem 
\begin{equation*}		
\left\{
\begin{aligned}
\sL \vec u+D p=\vec f &\quad \text{in }\, \Omega,\\
\dv \vec u=0 &\quad \text{in }\, \Omega,
\end{aligned}
\right.
\end{equation*}
where $\vec f\in L^\infty(\Omega)^n$. 
Then for any $x_0\in \Omega$ and $R\in (0,d_{x_0}]$, $\vec u$ is continuous in $B_R(x_0)$ with the estimate
\begin{equation}		\label{1229.eq1}
[\vec u]_{C^{\mu_1}(B_{R/2}(x_0))}\le C\left(R^{-n/2+1-\mu_1}\norm{D\vec u}_{L^2(\Omega)}+\norm{\vec f}_{L^q(\Omega)}\right)
\end{equation}
for any $q\in\big(\frac{n}{2},\frac{n}{2-\mu}\big)$, where $\mu_1:=2-n/q$ and  $C=C(n,\lambda,\mu,A_1,q)$.
Moreover, if $\vec f$ is supported in $B_R(x_0)$, then we have 
\begin{equation}		\label{1229.eq1a}
\norm{\vec u}_{L^\infty(B_{R/2}(x_0))}\le CR^2\norm{\vec f}_{L^\infty(B_R(x_0))},
\end{equation}
where $C=C(n,\lambda,K_0, K_1,R_1,\mu, A_1)$.
The statement is valid, provided that $\sL$ is replaced by $\sL^*$.
\end{lemma}

\begin{proof}
Let $x\in B_{R/2}(x_0)$ and $0<s\le R/2$.
We decompose $(\vec u,p)$ as $(\vec u_1,p_1)+(\vec u_2,p_2)$, where $(\vec u_2,p_2)\in W^{1,2}_0(B_s(x))^n\times {L}^2_0(B_s(x))$ satisfies 
\[
\left\{
\begin{aligned}
\sL \vec u_2+D p_2=\vec f &\quad \text{in }\, B_s(x),\\
\dv \vec u_2=0 &\quad \text{in }\, B_s(x).
\end{aligned}
\right.
\]
And then $(\vec u_1, p_1)\in W^{1,2}(B_s(x))^n\times L^2(B_s(x))$ satisfies
\[
\left\{
\begin{aligned}
\sL \vec u_1+D p_1=0 &\quad \text{in }\, B_s(x),\\
\dv \vec u_1=0 &\quad \text{in }\, B_s(x).
\end{aligned}
\right.
\]
From the estimate \eqref{1227.eq1} and H\"older's inequality, it follows that  
\begin{equation}		\label{0929.eq5a}
\norm{D\vec u_2}_{L^2(B_s(x))}\le C\norm{\vec f}_{L^{2n/(n+2)}(B_s(x))}\le Cs^{n/2-1+\mu_1}\norm{\vec f}_{L^{q}(B_R(x_0))},
\end{equation}
where $q\in \big(\frac{n}{2}, \frac{n}{2-\mu}\big)$, $\mu_1=2-n/q$, and $C=C(n,\lambda, q)$.
For $0<r<s$, we obtain by Lemma \ref{1227.lem1} that 
\begin{align}
\nonumber
\int_{B_r(x)}\abs{D\vec u}^2\,dx&\le 2\int_{B_r(x)}\abs{D\vec u_1}^2\,dx+2\int_{B_r(x)}\abs{D\vec u_2}^2\,dx\\
\nonumber
&\le C\left(\frac{r}{s}\right)^{n-2+2\mu}\int_{B_s(x)}\abs{D\vec u_1}^2\,dx+2\int_{B_s(x)}\abs{D\vec u_2}^2\,dx\\
\label{0929.eq5}
&\le C\left(\frac{r}{s}\right)^{n-2+2\mu}\int_{B_s(x)}\abs{D\vec u}^2\,dx+C\int_{B_s(x)}\abs{D\vec u_2}^2\,dx,
\end{align}
where $C=C(n,\lambda,A_1)$.
Therefore we get from \eqref{0929.eq5a} and \eqref{0929.eq5} that 
\begin{equation*}		
\int_{B_r(x)}\abs{D\vec u}^2\,dx\le C\left(\frac{r}{s}\right)^{n-2+2\mu}\int_{B_s(x)}\abs{D\vec u}^2\,dx+Cs^{n-2+2\mu_1}\norm{\vec f}^2_{L^{q}(B_R(x_0))}.
\end{equation*}
Then by \cite[Lemma 2.1, p. 86]{MR0717034}, we have 
\begin{equation*}	
\int_{B_r(x)}\abs{D\vec u}^2\,dx\le C\left(\frac{r}{R}\right)^{n-2+2\mu_1}\int_\Omega \abs{D\vec u}^2\,dx+Cr^{n-2+2\mu_1}\norm{\vec f}_{L^{q}(B_R(x_0))}^2
\end{equation*}
for any $x\in B_{R/2}(x_0)$ and $r\in (0,R/2)$.
From this together with  Morrey-Campanato's theorem, we prove \eqref{1229.eq1}.

To see \eqref{1229.eq1a}, assume $\vec f$ is supported in $B_R(x_0)$.
Notice from the Sobolev inequality that 
\[
\norm{\vec u}_{L^2(B_R(x_0))}\le C(n)R\norm{D\vec u}_{L^2(\Omega)}.
\]
Then we obtain by \eqref{1229.eq1} and the above estimate that 
\begin{align}
\nonumber
\norm{\vec u}_{L^\infty(B_{R/2}(x_0))}&\le CR^{\mu_1}[\vec u]_{C^{\mu_1}(B_{R/2}(x_0))}+CR^{n/2}\norm{\vec u}_{L^2(B_R(x_0))}\\
\nonumber
&\le CR^{-n/2+1}\norm{D\vec u}_{L^2(\Omega)}+CR^2\norm{\vec f}_{L^{\infty}(B_R(x_0))},
\end{align}
and thus, we get desired estimate from the inequality \eqref{1227.eq1}.
The lemma is proved.
\end{proof}

%=========================================================
\section{Proofs of main theorems}		\label{1006@sec1}
%=========================================================
In the section, we prove main theorems stated in Sections \ref{0110.sec1} and \ref{0110.sec2}.

%=========================================================
\subsection{Proof of Theorem \ref{1226.thm1}}		\label{0204.sec1}
%=========================================================

%=========================================================
\subsubsection{Averaged Green function}		\label{0108.sec1}
%=========================================================
Let $y\in \Omega$ and $\epsilon>0$ be fixed, but arbitrary.
Fix an integer $1\le k\le n$ and  let $(\vec v_\epsilon,\pi_\epsilon)=(\vec v_{\epsilon;y,k},\pi_{\epsilon;y,k})$ be the solution in $W^{1,2}_0(\Omega)^n\times {L}^2_0(\Omega)$ of 
\begin{equation*}		
\left\{
\begin{aligned}
\sL \vec u+D p=\frac{1}{\abs{\Omega_\epsilon(y)}}1_{\Omega_\epsilon(y)} \vec e_k &\quad \text{in }\, \Omega,\\
\dv \vec u=0 &\quad \text{in }\, \Omega,
\end{aligned}
\right.
\end{equation*}
where $\vec e_k$ is the $k$-th unit vector in $\bR^n$.
We define \emph{the averaged Green function} $(\vec G_\epsilon(\cdot,y), \vec \Pi_\epsilon(\cdot,y))$ for $\eqref{dp}$ by setting
\begin{equation}		\label{0102.eq1b}
G_\epsilon^{jk}(\cdot,y)=v_{\epsilon;y,k}^j \quad \text{and}\quad \Pi_\epsilon^k(\cdot,y)=\pi_{\epsilon;y,k}.
\end{equation}
Then $(\vec G_\epsilon(\cdot,y), \vec \Pi_\epsilon(\cdot,y))$ satisfies 
\begin{equation}		\label{0927.eq2a}
\int_\Omega a_{\alpha\beta}^{ij}D_\beta G^{jk}_\epsilon(\cdot,y)D_\alpha \varphi^i\,dx-\int_\Omega \Pi_\epsilon^k(\cdot,y)\dv \vec \varphi\,dx=\fint_{\Omega_{\epsilon}(y)} \varphi^k\,dx
\end{equation}
for any $\vec \varphi\in W^{1,2}_0(\Omega)^n$.
We also obtain by \eqref{1227.eq1} that 
\begin{equation}		\label{0927.eq2b}
\norm{\vec \Pi_\epsilon(\cdot,y)}_{L^2(\Omega)}+\norm{D\vec G_\epsilon(\cdot,y)}_{L^2(\Omega)}\le C\epsilon^{(2-n)/2}, \quad \forall \epsilon>0,
\end{equation}
where $C=C(n,\lambda,K_0, K_1,R_1)$.
The following lemma is an immediate consequence of Lemma \ref{1006@lem1}.

\begin{lemma}	\label{0102-lem1}
Let $y\in \Omega$ and $\epsilon>0$.
\begin{enumerate}[(i)]
\item
For any $x_0\in \Omega$ and $R\in (0,d_{x_0}]$ satisfying $B_{R}(x_0)\cap B_\epsilon(y)=\emptyset$, we have 
\begin{equation*}		
\int_{B_{R/2}(x_0)}\abs{D\vec G_\epsilon(x,y)}^2\,dx \le CR^{-2} \int_{B_{R}(x_0)}\abs{\vec G_\epsilon(x,y)}^2\,dx,
\end{equation*}
where $C=C(n,\lambda)$.
\item
Let $R\in (0,2d_y/3]$ and $\epsilon\in (0,R/4)$.
Then we have 
\begin{equation*}		
\int_{B_{R}(y)\setminus B_{R/2}(y)}\abs{D\vec G_\epsilon(x,y)}^2\,dx\le CR^{-2}\int_{B_{3R/2}(y)\setminus B_{R/4}(y)}\abs{\vec G_\epsilon(x,y)}^2\,dx,
\end{equation*}
where $C=C(n,\lambda)$.
\end{enumerate}
\end{lemma}

With the preparations in the previous section, we obtain the  pointwise estimate of the averaged Green function $\vec G_\epsilon(\cdot,y)$.

\begin{lemma}		\label{0929-thm1}
There exists a constant $C=C(n,\lambda,K_0, K_1,R_1,\mu, A_1)>0$ such that for any $x,\, y\in \Omega$ satisfying $0<\abs{x-y}<d_y/2$, we have 
\begin{equation}		\label{0929-e2}
\abs{\vec G_\epsilon(x,y)}\le C\abs{x-y}^{2-n}, \quad \forall \epsilon\in(0,\abs{x-y}/3).
\end{equation}
\end{lemma}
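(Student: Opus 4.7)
Set $r=|x-y|$. I would prove the bound by combining two ingredients: (a) the local $L^\infty$ estimate \eqref{0316.eq1}, obtained in the proof of Lemma \ref{1227.lem1} from assumption $(\bf{A1})$, applied to $\vec G_\epsilon(\cdot,y)$ on a ball near $x$ where the Stokes system is homogeneous; and (b) a decay estimate of the form $\|\vec G_\epsilon(\cdot,y)\|_{L^2(B_{2r/3}(x))}\le Cr^{(4-n)/2}$. Chaining these will give $|\vec G_\epsilon(x,y)|\le Cr^{-n/2}\cdot r^{(4-n)/2}=Cr^{2-n}$.

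For (a), note that $r<d_y/2$ forces $d_x>d_y/2>r$, so $B_{2r/3}(x)\subset\Omega$; moreover $\epsilon<r/3$ ensures $B_{2r/3}(x)\cap B_\epsilon(y)=\emptyset$, so $(\vec G_\epsilon(\cdot,y),\vec\Pi_\epsilon(\cdot,y))$ is a weak solution of the homogeneous Stokes system on $B_{2r/3}(x)$. Thus \eqref{0316.eq1} gives
\[
\abs{\vec G_\epsilon(x,y)}\le Cr^{-n/2}\norm{\vec G_\epsilon(\cdot,y)}_{L^2(B_{2r/3}(x))}.
\]

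For (b), since $B_{2r/3}(x)\subset\Omega\setminus B_{r/3}(y)$, it suffices to control $\vec G_\epsilon$ on an annulus of width $\sim r$ around $y$. I would first derive an exterior energy estimate
\[
\norm{D\vec G_\epsilon(\cdot,y)}_{L^2(\Omega\setminus B_s(y))}\le Cs^{(2-n)/2},\qquad 2\epsilon\le s\le d_y,
\]
by inserting the test function $\vec\varphi=\eta^2\vec G_\epsilon^{\cdot k}-\vec\psi$ into \eqref{0927.eq2a}, where $\eta$ is a smooth cutoff equal to $0$ on $B_s(y)$ and $1$ off $B_{2s}(y)$, and $\vec\psi\in W^{1,2}_0(B_{2s}(y)\setminus B_s(y))^n$ is a Bogovskii-type corrector (Remark \ref{K0122.rmk2}) with $\dv\vec\psi=\dv(\eta^2\vec G_\epsilon^{\cdot k})$. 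The compatibility condition $\int_{A_s}\dv(\eta^2\vec G_\epsilon^{\cdot k})\,dz=0$ holds because $\vec G_\epsilon\in W^{1,2}_0(\Omega)$ is divergence-free, so the flux through $\partial B_{2s}(y)$ vanishes. The modified $\vec\varphi$ is solenoidal, vanishes on $B_\epsilon(y)$, and annihilates the right-hand side of \eqref{0927.eq2a}. Ellipticity, Young's inequality, the Bogovskii bound $\|D\vec\psi\|_{L^2}\le Cs^{-1}\|\vec G_\epsilon\|_{L^2(A_s)}$, and a hole-filling iteration based on the annular Caccioppoli of Lemma \ref{0102-lem1}(ii) then yield the exterior gradient estimate. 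A further cutoff-Sobolev step promotes this to $\|\vec G_\epsilon(\cdot,y)\|_{L^{2n/(n-2)}(\Omega\setminus B_{2s}(y))}\le Cs^{(2-n)/2}$, and Hölder on the ball $B_{2r/3}(x)$ (of measure $\sim r^n$) produces $\norm{\vec G_\epsilon(\cdot,y)}_{L^2(B_{2r/3}(x))}\le Cr\cdot r^{(2-n)/2}=Cr^{(4-n)/2}$, as needed.

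The principal obstacle is the exterior energy estimate: because the natural cutoff $\eta^2\vec G_\epsilon^{\cdot k}$ is not divergence-free, one cannot eliminate the pressure in \eqref{0927.eq2a} by testing with it directly. The Bogovskii correction on the annulus $A_s$ is the remedy — its solvability relies precisely on the solenoidality of $\vec G_\epsilon$ — and the resulting error term must then be absorbed into a hole-filling iteration that meshes with the pressure-aware Caccioppoli of Lemma \ref{0102-lem1}. Everything downstream (Hölder, Sobolev, the application of \eqref{0316.eq1}) is routine once this estimate is in hand.
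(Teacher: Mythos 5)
Your step (a) is fine and matches the paper (the paper uses the $L^1\to L^\infty$ interior estimate \eqref{0929=e1} rather than the $L^2$ version \eqref{0316.eq1}, but both are available from Lemma \ref{1227.lem1}). The gap is in step (b). You want to feed the interior estimate with the exterior energy bound $\|D\vec G_\epsilon(\cdot,y)\|_{L^2(\Omega\setminus B_s(y))}\le Cs^{(2-n)/2}$, but in the paper that bound is \eqref{1229.eq2b} of Lemma \ref{1229-lem1}, which is proved \emph{after} and \emph{by means of} the present lemma: in \eqref{1229.eq3c} the quantity $R^{-2}\int_{B_{3R/2}(y)\setminus B_{R/4}(y)}|\vec G_\epsilon|^2$ is bounded by $CR^{2-n}$ precisely by invoking Lemma \ref{0929-thm1}. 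Your sketch of an independent proof hits the same obstruction: after testing with the Bogovskii-corrected cutoff and applying Young's inequality you are left with
\begin{equation*}
\int_{\Omega\setminus B_{2s}(y)}|D\vec G_\epsilon|^2\,dx\le \theta\int_{B_{2s}(y)\setminus B_s(y)}|D\vec G_\epsilon|^2\,dx+C s^{-2}\int_{B_{2s}(y)\setminus B_s(y)}|\vec G_\epsilon|^2\,dx,
\end{equation*}
and hole filling disposes only of the first term on the right. The second, zeroth-order term is exactly of the size you are trying to establish, and no bound for it uniform in $\epsilon$ is available at this stage: the only global information is $\|D\vec G_\epsilon\|_{L^2(\Omega)}\le C\epsilon^{(2-n)/2}$ from \eqref{0927.eq2b}, which via Sobolev gives $s^{-2}\|\vec G_\epsilon\|^2_{L^2(B_{2s}(y)\setminus B_s(y))}\lesssim \epsilon^{2-n}$, not $s^{2-n}$. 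Lemma \ref{0102-lem1}(ii) does not help either, since it bounds the gradient on an annulus by the function on a larger annulus, which is the wrong direction. So as written the argument is circular.

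The paper breaks the circle by a duality argument that bypasses the exterior energy estimate entirely: for $\vec f\in L^\infty$ supported in $B_R(y)$, the solution $(\vec u,p)$ of the adjoint problem \eqref{1229.eq2} satisfies $\|\vec u\|_{L^\infty(B_{R/2}(y))}\le CR^2\|\vec f\|_{L^\infty}$ by Lemma \ref{1229.lem1}; combined with the identity $\int_{B_R(y)}\vec f\cdot \vec v_\epsilon\,dx=\fint_{B_\epsilon(y)}u^k\,dx$ (valid for $\epsilon<R/2$) this yields $\|\vec G_\epsilon(\cdot,y)\|_{L^1(B_R(y))}\le CR^2$, and then the $L^1$-based interior estimate \eqref{0929=e1} applied on $B_R(x)$ with $R=2|x-y|/3$ gives the conclusion. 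If you want to keep your two-step structure, replace your step (b) by this $L^1$ duality bound.
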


\begin{proof}
Let $y\in \Omega$, $R\in (0,d_y)$, and  $\epsilon\in(0,R/2)$.
We denote $\vec v_\epsilon$ to be the $k$-th column of $\vec G_\epsilon(\cdot,y)$.
Assume that  $(\vec u,p)\in W^{1,2}_0(\Omega)^n\times L^2_0(\Omega)$ is the solution of 
\begin{equation}		\label{1229.eq2}		
\left\{
\begin{aligned}
\sL^* \vec u+D p=\vec f &\quad \text{in }\, \Omega,\\
\dv \vec u=0 &\quad \text{in }\, \Omega,
\end{aligned}
\right.
\end{equation}
where 
$f^i(x)= 1_{B_R(y)}\sgn (v_\varepsilon^i(x))$ and $\vec f=(f^1,\ldots,f^n)\in L^\infty(\Omega)^n$.
Then by testing with $\vec v_\epsilon$ in \eqref{1229.eq2}, we have 
\begin{equation*}	
\int_\Omega A_{\alpha\beta}D_\beta \vec v_\epsilon\cdot D_\alpha \vec u\,dx=\int_{B_R(y)}  \vec f\cdot  \vec v_\epsilon\,dx.
\end{equation*} 
Similarly, we set  $\vec \varphi=\vec u$ in \eqref{0927.eq2a} to obtain 
\begin{equation*}		
\int_\Omega A_{\alpha\beta}D_\beta \vec v_\epsilon\cdot D_\alpha \vec u\,dx=\fint_{B_\epsilon(y)} u^k\,dx.
\end{equation*}
From the above  two identities, we get
\begin{equation}		\label{0105.eq0}		
\int_{B_R(y)}\vec f\cdot \vec v_\epsilon\,dx=\fint_{B_\epsilon(y)} u^k\,dx,
\end{equation}
and thus, by  \eqref{1229.eq1a}, we derive 
\begin{equation}		\label{1229.eq2a}
\norm{\vec G_\epsilon(\cdot,y)}_{L^1(B_R(y))}\le CR^2, \quad R\in (0,d_y), \quad \epsilon\in (0,R/2),
\end{equation}
where $C=C(n,\lambda,K_0, K_1,R_1,\mu,A_1)$.

Now, we are ready to prove the lemma.
Let $x,\, y\in \Omega$ satisfy $0<\abs{x-y}<d_y/2$.
We write $R:=2\abs{x-y}/3$.
Note that if $\epsilon <R/2$, then $(\vec G_\epsilon(\cdot,y),\vec \Pi_\epsilon(\cdot,y))$ satisfies 
\[
\left\{
\begin{aligned}
\sL\vec G_\epsilon(\cdot,y)+D\vec \Pi_\epsilon(\cdot,y)=0 &\quad \text{in }\, B_{R}(x),\\
\dv \vec G_\epsilon(\cdot,y)=0 &\quad \text{in }\, B_R(x).
\end{aligned}
\right.
\]
Then by Lemma \ref{1227.lem1}, we have 
\begin{equation*}
\abs{\vec G_\epsilon(x,y)}\le CR^{-n}\norm{\vec G_\epsilon(\cdot,y)}_{L^1(B_R(x))}\le CR^{-n}\norm{\vec G_\epsilon(\cdot,y)}_{L^1(B_{3R}(y))}.
\end{equation*}
This together with \eqref{1229.eq2a} yields \eqref{0929-e2}.
The lemma is proved.
\end{proof}

Based on the pointwise estimate \eqref{0929-e2}, we prove that 
$\vec G_\epsilon(\cdot,y)$ and  $\vec \Pi_\epsilon(\cdot,y)$ satisfy  the following $L^q$-estimates uniformly in $\epsilon>0$.

\begin{lemma}		\label{1229-lem1}
Let $y\in \Omega$, $R\in (0,d_y]$, and $\epsilon>0$.
Then we have 
\begin{equation}		\label{1229.eq2b}
\norm{\vec G_\epsilon(\cdot,y)}_{L^{2n/(n-2)}(\Omega\setminus B_R(y))}+\norm{D\vec G_\epsilon(\cdot,y)}_{L^2(\Omega\setminus B_R(y))}\le CR^{(2-n)/2}.
\end{equation}
Also,  we obtain
\begin{align}
\label{0103.eq3}
\abs{\set{x\in \Omega:\abs{\vec G_\epsilon(x,y)}>t}}&\le Ct^{-n/(n-2)}, \quad \forall t>d_y^{2-n},\\
\label{0103.eq3a}
\abs{\set{x\in \Omega:\abs{D_x\vec G_\epsilon(x,y)}>t}}&\le Ct^{-n/(n-1)}, \quad \forall t>d_y^{1-n}.
\end{align}
Moreover, we derive the following uniform $L^q$ estimates:
\begin{align}		
\label{1229.eq2c}
\norm{\vec G_\epsilon(\cdot,y)}_{L^q(B_R(y))}\le C_qR^{2-n+n/q}, &\quad q\in [1,n/(n-2)),\\
\label{1229.eq2d}
\norm{D\vec G_\epsilon(\cdot,y)}_{L^q(B_R(y))}\le C_qR^{1-n+n/q}, &\quad q\in [1,n/(n-1)),\\
\label{0103.eq6}
\norm{\vec \Pi_\epsilon(\cdot,y)}_{L^q(\Omega)}\le C_{y,q}, &\quad q\in [1,n/(n-1)).
\end{align}
In the above, $C=C(n,\lambda,K_0,K_1,R_1, \mu, A_1)$, $C_q=C_q(n,\lambda,K_0, K_1,R_1, \mu,A_1,q)$, and $C_{y,q}=C_{y,q}(n,\lambda,K_0, K_1,R_1, \mu, A_1,q,d_y)$.
\end{lemma}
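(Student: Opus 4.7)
The plan is to prove the energy estimate \eqref{1229.eq2b} first, then extract the two weak-type inequalities \eqref{0103.eq3}--\eqref{0103.eq3a} from it together with the pointwise bound \eqref{0929-e2}, then derive the strong $L^q$-bounds \eqref{1229.eq2c}--\eqref{1229.eq2d} by the distribution function (layer-cake) formula, and finally obtain the pressure estimate \eqref{0103.eq6} by a duality argument using the Bogovski\u{\i} operator.

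For \eqref{1229.eq2b}, I would first reduce to the case $R \le d_y/4$ and $\epsilon < R/6$, since when $\epsilon \ge R/6$ the estimate is immediate from \eqref{0927.eq2b} and when $R \in [d_y/4,d_y]$ it is immediate from its validity at the smaller radius $d_y/4$. In the main case, pick a cutoff $\eta \in C^\infty_0(\Omega)$ with $\eta = 0$ on $B_{R/2}(y)$, $\eta = 1$ outside $B_R(y)$, and $|D\eta|\le C/R$. Since $\dv \vec G_\epsilon(\cdot,y)=0$, we have $\dv(\eta^2\vec G_\epsilon(\cdot,y)) = 2\eta D\eta\cdot \vec G_\epsilon(\cdot,y)$, which is supported in the annulus $B_R(y)\setminus B_{R/2}(y)$ and integrates to zero over that annulus. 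Applying the Bogovski\u{\i} operator there (Remark \ref{K0122.rmk2}) yields $\vec B\in W^{1,2}_0(B_R(y)\setminus B_{R/2}(y))^n$ with $\dv\vec B = 2\eta D\eta\cdot\vec G_\epsilon(\cdot,y)$ and $\|D\vec B\|_{L^2}\le CR^{-1}\|\vec G_\epsilon(\cdot,y)\|_{L^2(B_R(y)\setminus B_{R/2}(y))}$. Extending by zero and setting $\vec\varphi = \eta^2\vec G_\epsilon(\cdot,y) - \vec B$ produces an admissible divergence-free test function that vanishes on $B_{R/2}(y)\supset B_\epsilon(y)$, so both the pressure and the right-hand side of \eqref{0927.eq2a} drop out. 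Ellipticity and Young's inequality then give
\[
\int_{\Omega\setminus B_R(y)}|D\vec G_\epsilon(\cdot,y)|^2\,dx \le CR^{-2}\int_{B_R(y)\setminus B_{R/2}(y)}|\vec G_\epsilon(\cdot,y)|^2\,dx.
\]
Invoking the pointwise bound \eqref{0929-e2} (valid on the annulus because $R/2 < d_y/2$ and $\epsilon < R/6$) yields $|\vec G_\epsilon(\cdot,y)|\le CR^{2-n}$ there, so the right-hand side is $\le CR^{2-n}$. The Sobolev inequality applied to $\eta\vec G_\epsilon(\cdot,y)$ converts this into the $L^{2n/(n-2)}$ bound in \eqref{1229.eq2b}.

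Given \eqref{1229.eq2b}, for \eqref{0103.eq3} and \eqref{0103.eq3a} I would split a superlevel set as $\{|\vec G_\epsilon|>t\}\cap B_R(y)$ and its complement, then optimize $R$: for \eqref{0103.eq3} choose $R$ so that $R^{2-n}=t$ (the threshold $t>d_y^{2-n}$ ensures $R<d_y$), use $|B_R(y)|\le CR^n = Ct^{-n/(n-2)}$ for the first piece, and Chebyshev with \eqref{1229.eq2b} in $L^{2n/(n-2)}$ for the second; analogously for \eqref{0103.eq3a} choose $R$ with $R^{1-n}=t$ and use the $L^2$ part of \eqref{1229.eq2b}. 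The strong estimates \eqref{1229.eq2c} and \eqref{1229.eq2d} follow by the distribution-function identity $\int_{B_R(y)} |\vec G_\epsilon|^q = q\int_0^\infty t^{q-1}|\{|\vec G_\epsilon|>t\}\cap B_R(y)|\,dt$, split at $t_0=R^{2-n}$ (resp.\ $t_0=R^{1-n}$): the integral over $t\le t_0$ is estimated by the trivial volume bound $|B_R(y)|\le CR^n$, while the integral over $t>t_0$ converges precisely when $q<n/(n-2)$ (resp.\ $q<n/(n-1)$) by the weak-type estimates, and both pieces evaluate to $CR^{n + (2-n)q}$ (resp.\ $CR^{n+(1-n)q}$).

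The pressure estimate \eqref{0103.eq6} is the most delicate step and I would handle it by duality. Given $\phi\in L^{q'}(\Omega)$ with $q'=q/(q-1)>n$, set $\tilde\phi=\phi-(\phi)_\Omega$ and use the Bogovski\u{\i} operator on $\Omega$ (available for Lipschitz domains) to produce $\vec\psi\in W^{1,q'}_0(\Omega)^n$ with $\dv\vec\psi=\tilde\phi$ and $\|D\vec\psi\|_{L^{q'}}\le C\|\phi\|_{L^{q'}}$. Since $\vec\Pi_\epsilon(\cdot,y)\in L^2_0(\Omega)$, $\int\vec\Pi_\epsilon(\cdot,y)\phi = \int\vec\Pi_\epsilon(\cdot,y)\tilde\phi$, and testing \eqref{0927.eq2a} with $\vec\psi$ (approximated, or first in $W^{1,2}_0$ since $q'>2$) gives
\[
\int_\Omega \Pi_\epsilon^k(\cdot,y)\,\phi\,dx = \int_\Omega a_{\alpha\beta}^{ij} D_\beta G_\epsilon^{jk}(\cdot,y)\,D_\alpha\psi^i\,dx - \fint_{\Omega_\epsilon(y)}\psi^k\,dx.
\]
The first term is bounded by $\|D\vec G_\epsilon(\cdot,y)\|_{L^q(\Omega)}\|D\vec\psi\|_{L^{q'}(\Omega)}$, and $\|D\vec G_\epsilon(\cdot,y)\|_{L^q(\Omega)}$ is controlled by $C$ using \eqref{1229.eq2d} on $B_{d_y}(y)$ combined with H\"older's inequality and \eqref{1229.eq2b} on $\Omega\setminus B_{d_y}(y)$. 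The second term is dominated by $\|\vec\psi\|_{L^\infty(\Omega)}\le C\|D\vec\psi\|_{L^{q'}(\Omega)}$ via Morrey's embedding (which requires exactly $q'>n$, i.e.\ $q<n/(n-1)$). Taking the supremum over $\phi$ of unit $L^{q'}$-norm delivers \eqref{0103.eq6}. The main obstacle throughout is the correct bookkeeping of the pressure: it is handled in step one by killing it with a divergence-free test function via Bogovski\u{\i}, and in the last step by conversely using Bogovski\u{\i} to realize any mean-zero test function as a divergence and thus transfer the estimate onto $\vec\Pi_\epsilon$.
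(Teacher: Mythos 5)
Your proposal is correct and, apart from one detail in the first step, follows the paper's argument: the weak-type bounds, the layer-cake derivation of \eqref{1229.eq2c}--\eqref{1229.eq2d}, and the duality/Bogovski\u{\i} treatment of the pressure (the paper simply takes the extremal dual function $w=\sgn(\pi_\epsilon)\abs{\pi_\epsilon}^{q-1}$ rather than an arbitrary $\phi\in L^{q'}$) are identical in substance. The one place you deviate is in proving \eqref{1229.eq2b}: you make the test function divergence-free by subtracting a Bogovski\u{\i} correction $\vec B$ supported on the annulus, whereas the paper tests \eqref{0927.eq2a} with $\eta^2\vec G_\epsilon(\cdot,y)$ directly and then bounds the resulting pressure term on the annulus by a separate Bogovski\u{\i} argument (see \eqref{1229.eq3b}). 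Be aware that your variant is not closed by ``ellipticity and Young's inequality'' alone: the correction term $\int a_{\alpha\beta}D\vec G_\epsilon\cdot D\vec B$ is controlled by $\norm{D\vec G_\epsilon}_{L^2(D)}\norm{D\vec B}_{L^2(D)}$ with $D=B_R(y)\setminus B_{R/2}(y)$, and since $\eta<1$ on part of $D$ this factor cannot be absorbed into $\int\eta^2\abs{D\vec G_\epsilon}^2$. You need the Caccioppoli inequality on the annulus (Lemma \ref{0102-lem1} (ii)) to convert $\norm{D\vec G_\epsilon}_{L^2(D)}$ into $CR^{-1}\norm{\vec G_\epsilon}_{L^2(B_{3R/2}(y)\setminus B_{R/4}(y))}$ --- exactly the ingredient the paper uses to bound the pressure --- after which the pointwise estimate \eqref{0929-e2} on the slightly larger annulus finishes the step as you describe (tighten $\epsilon<R/6$ to $\epsilon<R/12$ accordingly). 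With that one addition the proof is complete.
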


\begin{proof}
Recall the notation \eqref{0102.eq1b}.
We first prove the estimate \eqref{1229.eq2b}.
From the obvious fact that $d_y/3$ and $d_y$ are comparable to each other, we only need to prove the estimate \eqref{1229.eq2b} for $R\in (0,d_y/3]$.
If $\epsilon\ge R/12$, then by \eqref{0927.eq2b} and the Sobolev inequality, we have  
\begin{equation}		\label{K1230.eq1}
\norm{\vec G_\epsilon(\cdot,y)}_{L^{2n/(n-2)}(\Omega\setminus B_R(y))}+\norm{D\vec G_\epsilon(\cdot,y)}_{L^2(\Omega\setminus B_R(y))}\le C\norm{D\vec G_\epsilon(\cdot,y)}_{L^2(\Omega)}\le CR^{(2-n)/2}.
\end{equation}
On the other hand, if $\epsilon\in (0,R/12)$, then by setting $\vec \varphi=\eta^2\vec v_\epsilon$ in \eqref{0927.eq2a}, where  $\eta$ is a smooth function satisfying 
\begin{equation*}		
0\le \eta\le1, \quad \eta\equiv 1 \,\text{ on }\, \bR^n\setminus B_{R}(y), \quad \eta\equiv 0 \,\text{ on }\,B_{R/2}(y), \quad \abs{D\eta}\le CR^{-1},
\end{equation*}
we have 
\begin{equation}		\label{K1229.eq3a}
\int_\Omega \eta^2\abs{D\vec v_\epsilon}^2\,dx\le C\int_\Omega \abs{D\eta}^2\abs{\vec v_\epsilon}^2\,dx+C\int_{D}\abs{\pi_\epsilon-(\pi_\epsilon)_D}^2\,dx,
\end{equation}
where $D=B_R(y)\setminus B_{R/2}(y)$.
By Remark \ref{K0122.rmk2},  there exists a  function $\vec \phi_\epsilon\in W^{1,2}_0(D)^n$ such that 
\begin{equation*}		
\dv \vec \phi_\epsilon=\pi_\epsilon-(\pi_\epsilon)_{D} \,\text{ in }\, D, \quad \norm{D\vec \phi_\epsilon}_{L^2(D)}\le C\norm{\pi_\epsilon-(\pi_\epsilon)_D}_{L^2(D)},
\end{equation*}
where $C=C(n)$.
Therefore, by setting $\vec \varphi=\vec \phi_\epsilon $ in \eqref{0927.eq2a}, we get from  Lemma \ref{0102-lem1} (ii) that  
\begin{equation}		\label{1229.eq3b}
\int_{D}\abs{\pi_\epsilon-(\pi_\epsilon)_D}^2\,dx\le C\int_{D}\abs{D\vec v_\epsilon}^2\,dx\le CR^{-2}\int_{B_{3R/2}(y)\setminus B_{R/4}(y)}\abs{\vec v_\epsilon}^2\,dx.
\end{equation}
Then by combining \eqref{K1229.eq3a} and \eqref{1229.eq3b}, we find that 
\begin{equation}		\label{1229.eq3c}
\int_\Omega \eta^2|D\vec v_\epsilon|^2\,dx\le C R^{-2}\int_{B_{3R/2}(y)\setminus B_{R/4}(y)} \abs{\vec v_\epsilon}^2\,dx \le CR^{2-n},
\end{equation}
where we used Lemma \ref{0929-thm1} in the last inequality.
Also, by using the fact that 
\[
\norm{\eta \vec v_\epsilon}_{L^{2n/(n-2)}(\Omega)}\le C\norm{D(\eta \vec v_\epsilon)}_{L^2(\Omega)}\le C\norm{\eta D\vec v_\epsilon}_{L^2(\Omega)}+C\norm{D\eta \vec v_\epsilon}_{L^2(\Omega)},
\]
the inequality \eqref{1229.eq3c} implies 
\[
\norm{\vec G_\epsilon(\cdot,y)}_{L^{2n/(n-2)}(\Omega\setminus B_R(y))}+\norm{D\vec G_\epsilon(\cdot,y)}_{L^2(\Omega\setminus B_R(y))}\le CR^{(2-n)/2}.
\]
This together with \eqref{K1230.eq1} gives \eqref{1229.eq2b} for $R\in (0,d_y/3]$.

Now, let $A_t=\set{x\in \Omega:\abs{\vec G_\epsilon(x,y)}>t}$ and choose $t=R^{2-n}>d_y^{2-n}$.
Then by \eqref{1229.eq2b}, we have 
\begin{equation*}
\abs{A_t\setminus B_R(y)}\le t^{-2n/(n-2)}\int_{A_t\setminus B_R(y)}\abs{\vec G_\epsilon(x,y)}^{2n/(n-2)}\,dx\le C t^{-n/(n-2)}.
\end{equation*}
From this inequality and the fact that $\abs{A_t\cap B_R(y)}\le CR^n=Ct^{-n/(n-2)}$, we get \eqref{0103.eq3}. 
Let us fix $q\in [1,n/(n-2))$.
Note that 
\begin{align}
\nonumber
\int_{B_R(y)}\abs{\vec G_\epsilon(x,y)}^q\,dx&=\int_{B_R(y)\cap A_t^c}\abs{\vec G_\epsilon(x,y)}^q\,dx+\int_{B_R(y)\cap A_t}\abs{\vec G_\epsilon(x,y)}^q\,dx\\
\label{0103.eq5}
&\le C R^{(2-n)q+n}+\int_{A_t}\abs{\vec G_\epsilon(x,y)}^q\,dx,
\end{align}
where $t=R^{2-n}>d^{2-n}_y$.
From \eqref{0103.eq3} it follows that 
\begin{align}
\nonumber
\int_{A_t}\abs{\vec G_\epsilon(x,y)}^q\,dx&=q\int_0^\infty s^{q-1}\bigabs{\set{x\in \Omega:\abs{\vec G_\epsilon(x,y)}>\max(t,s)}}\,ds\\
\nonumber
&\le C_q t^{-n/(n-2)}\int_0^t s^{q-1}\,ds+C_q\int_t^\infty s^{q-1-n/(n-2)}\,ds\\
\label{0103.eq5a}
&\le C_qR^{(2-n)q+n},
\end{align}
where $C_q=C_q(n,\lambda,K_0, K_1,R_1,\mu,A_1,q)$.
Therefore, by combining \eqref{0103.eq5} and \eqref{0103.eq5a}, we obtain \eqref{1229.eq2c}.
Moreover, by utilizing \eqref{1229.eq2b}, and following the same steps as in the above, we get \eqref{0103.eq3a} and \eqref{1229.eq2d}.

It only remains to establish \eqref{0103.eq6}.
From H\"older's inequality, we only need to prove the inequality with $q\in (1,n/(n-1))$.
Let $q\in (1,n/(n-1))$ and $q'=q/(q-1)$, and denote 
\[
w:= \sgn (\pi_\epsilon) \abs{\pi_\epsilon}^{q-1}.
\]
Then we have  
\[
w\in L^{q'}(\Omega), \quad n<q'<\infty.
\]
Therefore by Remark \ref{K0122.rmk2} and  the  Sobolev inequality,  there exists a function $\vec \phi\in W^{1,q'}_0(\Omega)^n$ such that 
\begin{equation}		\label{0103.eq6b}
\begin{aligned}
&\dv \vec \phi=w-(w)_{\Omega} \,\text{ in }\, \Omega,\\
&
\norm{\vec \phi}_{L^\infty(\Omega)}\le C\|D\vec \phi\|_{L^{q'}(\Omega)}\le C\norm{w}_{L^{q'}(\Omega)}.
\end{aligned}
\end{equation}
We observe  that
\begin{equation}		\label{0103.eq6c}
\int_\Omega \pi_\epsilon \dv \vec \phi\,dx=\int_{\Omega}\pi_\epsilon (w-(w)_{\Omega})\,dx
=\int_{\Omega}\pi_\epsilon w\,dx=\int_{\Omega}\abs{w}^{q'}\,dx.
\end{equation}
By setting $\vec \varphi=\vec \phi$ in \eqref{0927.eq2a}, we get from \eqref{0103.eq6b} and \eqref{0103.eq6c} that 
\begin{equation}		\label{0105.eq2}
\int_{\Omega}\abs{w}^{q'}\,dx\le C\big(1+\norm{D\vec v_\epsilon}_{L^q(\Omega)}\big)\norm{w}_{L^{q'}(\Omega)}.
\end{equation}
Notice from \eqref{1229.eq2b} and \eqref{1229.eq2d} that 
\[
\norm{D\vec v_\epsilon}_{L^q(\Omega)}\le C_{y,q}
\]
for all $\varepsilon>0$, where $C_{y,q}=C_{y,q}(n,\lambda,K_0, K_1,R_1, \mu,A_1,q,d_y)$.
This together with \eqref{0105.eq2} gives \eqref{0103.eq6}.
The lemma is proved.
\end{proof}

%=========================================================
\subsubsection{Construction of the Green function}		\label{0108.sec2}
%=========================================================
Let $y\in \Omega$  be fixed, but arbitrary.
Notice from Lemma \ref{1229-lem1} and the weak compactness theorem that there exist a sequence $\set{\epsilon_\rho}_{\rho=1}^\infty$ tending to zero and functions $\vec G(\cdot,y)$ and $\hat{\vec G}(\cdot,y)$ such that 
\begin{align}
\nonumber
&\vec G_{\epsilon_\rho}(\cdot,y) \rightharpoonup \vec G(\cdot,y) \quad \text{weakly in }\, W^{1,2}(\Omega \setminus \overline{B_{d_y/2}(y)})^{n\times n},\\
\label{0103.e1a}
&\vec G_{\epsilon_\rho}(\cdot,y) \rightharpoonup \hat{\vec G}(\cdot,y) \quad \text{weakly in }\, W^{1,q}(B_{d_y}(y))^{n\times n},
\end{align}
where $q\in (1,n/(n-1))$.
Since $\vec G(\cdot,y)\equiv \hat{\vec G}(\cdot,y)$ on $B_{d_y}(y)\setminus \overline{B_{d_y/2}(y)}$, we shall extend $\vec G(\cdot,y)$ to entire $\Omega$ by setting $\vec G(\cdot,y)\equiv \hat{\vec G}(\cdot,y)$ on $\overline{B_{d_y/2}(y)}$.
By applying a diagonalization process and passing to a subsequence, if necessary, we may assume that
\begin{equation}		\label{0103.e1b}
\vec G_{\epsilon_\rho}(\cdot,y) \rightharpoonup \vec G(\cdot,y) \quad \text{weakly in }\, W^{1,2}(\Omega \setminus \overline{B_R(y)})^{n\times n}, \quad \forall R\in (0,d_y].
\end{equation}
Indeed, if we consider a sequence $\{R_i\}_{i=1}^\infty$ satisfying $R_i\in (0, d_y]$ and $R_i \searrow 0$, then for each $i\in \{1,2,\ldots\}$, there exists  a subsequence of $\{\vec G_{\varepsilon_\rho}(\cdot,y)\}$, denoted by $\big\{\vec G_{\varepsilon_{\rho_{i,j}}}(\cdot,y)\big\}$, such that 
$$
\big\{\vec G_{\varepsilon_{\rho_{i+1,j}}}(\cdot,y)\big\}\subset \big\{\vec G_{\varepsilon_{\rho_{i,j}}}(\cdot,y)\big\}
$$
and 
$$
\vec G_{\epsilon_{\rho_{i,j}}}(\cdot,y) \rightharpoonup \vec G(\cdot,y) \quad \text{weakly in }\, W^{1,2}(\Omega \setminus \overline{B_{R_i}(y)})^{n\times n} \quad \text{as }\, j\to \infty.
$$
Taking the subsequence as $\big\{\vec G_{\varepsilon_{\rho_{i,i}}}(\cdot,y)\big\}$, we see that \eqref{0103.e1b} holds.
By \eqref{0103.eq6},  there exists a function $\vec \Pi(\cdot,y)\in L^q_0(\Omega)^n$ such that, by passing to a subsequence, 
\begin{equation}		\label{0103.eq1c}
\vec \Pi_{\epsilon_\rho}(\cdot,y) \rightharpoonup {\vec \Pi}(\cdot,y) \quad \text{weakly in }\, L^q(\Omega)^n.
\end{equation}

We shall now claim that $(\vec G(x,y),\vec \Pi(x,y))$ satisfies the properties  a) -- c) in Definition \ref{0110.def} so that $(\vec G(x,y),\vec \Pi(x,y))$ is indeed the Green function for $\eqref{dp}$.
Notice from \eqref{0103.e1b} that for any $\zeta\in C^\infty_0(\Omega)$ satisfying $\zeta\equiv 1$ on $B_R(y)$, where $R\in (0,d_y)$, we have
\[
(1-\zeta)\vec G_{\epsilon_\rho}(\cdot,y)\rightharpoonup (1-\zeta)\vec G(\cdot,y) \quad \text{weakly in }\, W^{1,2}(\Omega)^{n\times n}.
\]
Since $W^{1,2}_0(\Omega)$ is weakly closed in $W^{1,2}(\Omega)$, we have $(1-\zeta)\vec G(\cdot,y)\in W^{1,2}_0(\Omega)^{n\times n}$, and thus the property a) is verified.
Let $\eta$ be a smooth cut-off function satisfying $\eta\equiv 1$ on $B_{d_y/2}(y)$ and $\supp \eta\subset B_{d_y}(y)$.
Then by \eqref{0927.eq2a}, \eqref{0103.e1a} -- \eqref{0103.eq1c}, we obtain for $\vec \varphi\in C^\infty_0(\Omega)^n$ that  
\begin{align}
\nonumber
\varphi^k(y)=&\lim_{\rho\to \infty}\fint_{\Omega_{\epsilon_\rho}(y)} \varphi^k\\
\nonumber
=&\lim_{\rho\to \infty}\left(\int_\Omega a^{ij}_{\alpha\beta}D_\beta G^{jk}_{\epsilon_\rho}(\cdot,y)D_\alpha(\eta \varphi^i)+\int_\Omega a^{ij}_{\alpha\beta}D_\beta G^{jk}_{\epsilon_\rho}(\cdot,y)D_\alpha((1-\eta) \varphi^i)\right)\\
\nonumber
&-\lim_{\rho\to \infty}\int_\Omega \Pi^k_{\epsilon_\rho}(\cdot,y)\dv  \vec\varphi\\
\nonumber
=&\int_\Omega a^{ij}_{\alpha\beta}D_\beta G^{jk}(\cdot,y)D_\alpha \varphi^i-\int_\Omega \Pi^k(\cdot,y)\dv \vec \varphi.
\end{align}
Similarly, we get 
\[
\int_\Omega \phi(x) \dv_x \vec G(x,y)\,dx=0, \quad \forall \phi\in C^\infty(\Omega).
\] 
From the above two identity, the property b) is satisfied.
Finally, if $(\vec u,p)\in W^{1,2}_0(\Omega)^n\times L^2_0(\Omega)$ is the weak solution of the problem \eqref{dps},
then by setting $\vec \varphi$ to be the $k$-th column of $\vec G_{\epsilon_\rho}(\cdot,y)$ in \eqref{1218.eq1} and setting $\vec \varphi=\vec u$ in \eqref{0927.eq2a}, we have (see e.g., Eq. \eqref{0105.eq0})
\begin{equation} 		\label{160906@eq10}		
\fint_{\Omega_{\epsilon_\rho}(y)}  \vec u=\int_\Omega \vec G_{\epsilon_\rho}(\cdot,y)^{\tr}\vec f-\int_\Omega D_\alpha \vec G_{\epsilon_\rho}(\cdot,y)^{\tr}\vec f_\alpha-\int_\Omega \vec \Pi_{\epsilon_\rho}(\cdot,y)g.
\end{equation}
By letting $\rho\to \infty$ in the above identity, we find that $(\vec G(x,y),\vec \Pi(x,y))$ satisfies the property c) in Definition \ref{0110.def}.

Next, let $y\in \Omega$ and $R\in (0,d_y]$.
Let $\vec v$ and $\vec v_{\epsilon}$ be the $k$-th column of $\vec G(\cdot,y)$ and $\vec G_\epsilon(\cdot,y)$, respectively.
Then for any $\vec g\in C^\infty_0(B_R(y))^n$, we obtain by \eqref{1229.eq2c} and \eqref{0103.e1a} that 
\begin{equation*}
\Abs{\int_{B_R(y)}  \vec v\cdot \vec g\,dx}=\lim_{\rho\to \infty}\Abs{\int_{B_R(y)}\vec v_{\epsilon_\rho}\cdot \vec g\,dx}\le C_q R^{2-n+n/q}\norm{\vec g}_{L^{q'}(B_R(y))},
\end{equation*}
where $q\in [1,n/(n-2))$ and $q'=q/(q-1)$.
Therefore, by a duality argument, we obtain the estimate iv) in Theorem \ref{1226.thm1}.
Similarly, from  Lemma \ref{1229-lem1}, \eqref{0103.e1a}, and \eqref{0103.e1b}, we have the estimates i) and v) in the theorem.  
Also, ii) and iii) are deduced from i) in the same way as \eqref{0103.eq3} and \eqref{0103.eq3a} are deduced from \eqref{1229.eq2b}.
Therefore, $\vec G(x,y)$ satisfies the estimates i) -- v) in Theorem \ref{1226.thm1}. 
For $x, y\in \Omega$ satisfying $0<\abs{x-y}<d_y/2$, set $r:=\abs{x-y}/4$.
Notice from the property b) in Definition \ref{0110.def} that $(\vec G(\cdot,y), \vec \Pi(\cdot,y))$ satisfies 
\[
\left\{
\begin{aligned}
\sL\vec G(\cdot,y)+D\vec \Pi(\cdot,y)=0 &\quad \text{in }\, B_{r}(x),\\
\dv \vec G(\cdot,y)=0 &\quad \text{in }\, B_{r}(x).
\end{aligned}
\right.
\]
Then by Lemma \ref{1227.lem1} and H\"older's inequality, we have  
\begin{equation*}
\abs{\vec G(x,y)}\le Cr^{(2-n)/2}\norm{\vec G(\cdot,y)}_{L^{2n/(n-2)}(B_{2r}(x))}\le Cr^{(2-n)/2}\norm{\vec G(\cdot,y)}_{L^{2n/(n-2)}(\Omega \setminus B_{r}(y))}.
\end{equation*}
This together with the estimate i) in Theorem \ref{1226.thm1} implies 
\[
\abs{\vec G(x,y)}\le C\abs{x-y}^{2-n}, \quad 0<\abs{x-y}<d_y/2. 
\] 
\begin{lemma}		\label{0108.lem1}
For each compact set $K\subset \Omega \setminus \set{y}$, there is a subsequence of $\{\vec G_{\varepsilon_\rho}(\cdot,y)\}$ that converges to $\vec G(\cdot,y)$ uniformly on $K$.
\end{lemma}

\begin{proof}
Let $x\in \Omega$ and $R\in (0,d_x]$ satisfying $\overline{B_R(x)}\subset \Omega \setminus \set{y}$.
Notice that  there exists $\epsilon_B>0$ such that for $\epsilon<\epsilon_B$, we have 
\[
\left\{
\begin{aligned}
\sL\vec G_\epsilon(\cdot,y)+D\vec \Pi_\epsilon(\cdot,y)=0 &\quad \text{in }\, B_R(x),\\
\dv \vec G_\epsilon(\cdot,y)=0 &\quad \text{in }\, B_R(x).
\end{aligned}
\right.
\]
By $(\bf{A1})$ and \eqref{1229.eq2b}, $\set{\vec G_\epsilon(\cdot,y)}_{\epsilon\le\epsilon_B}$ is equicontinuous on $\overline{B_{R/2}(x)}$.
Also, it follows from Lemma \ref{1227.lem1} that $\set{\vec G_\epsilon(\cdot,y)}_{\epsilon\le\epsilon_B}$ is uniformly bounded on $\overline{B_{R/2}(x)}$.
By the Arzel\`a-Ascoli theorem, we obtain the desired conclusion.
\end{proof}

%=========================================================
\subsubsection{Proof of the identity \eqref{1226.eq1a}}
%=========================================================

For any $x\in \Omega$ and $\sigma>0$, we define the averaged Green function $(\vec G^*_{\sigma}(\cdot,x), \vec \Pi_\sigma^*(\cdot,x))$ for $\eqref{dps}$ by letting its $l$-th column to be the unique weak solution in $W^{1,2}_0(\Omega)^n\times L^2_0(\Omega)$ of the problem
\[
\left\{
\begin{aligned}
\sL^*\vec u+Dp=\frac{1}{\abs{\Omega_\sigma(x)}}1_{\Omega_\sigma(x)}\vec e_l &\quad \text{in }\, \Omega,\\
\dv \vec u=0 &\quad \text{in }\, \Omega,
\end{aligned}
\right.
\]
where $\vec e_l$ is the $l$-th unit vector in $\bR^n$.
Then by following the same argument as in Sections \ref{0108.sec1} and \ref{0108.sec2}, there exist a sequence $\set{\sigma_\nu}_{\nu=1}^\infty$ tending to zero and the Green function $(\vec G^*(\cdot,x), \vec \Pi^*(\cdot,x))$ for $\eqref{dps}$ 
satisfying the counterparts of \eqref{0103.e1a}, \eqref{0103.e1b}, \eqref{0103.eq1c}, and Lemma \ref{0108.lem1}.

Now, let $x,\,y \in \Omega$ and $x\neq y$.
We then obtain for $\epsilon\in (0,d_y]$ and $\sigma\in (0,d_x]$ that  
\begin{equation}		\label{jk.eq2b}
\fint_{B_\epsilon(y)} (G^*_{\sigma})^{kl}(\cdot,x)=\int_\Omega a^{ij}_{\alpha\beta}D_\beta G^{jk}_\epsilon(\cdot,y)D_\alpha \big((G^*_\sigma)^{il}(\cdot,x)\big)=\fint_{B_{\sigma}(x)}G_\epsilon^{lk}(\cdot,y).
\end{equation}
We define
\[
I^{kl}_{\rho,\nu}:=\fint_{B_{\epsilon_\rho}(y)}(G^*_{\sigma_{\nu}})^{kl}(\cdot,x)=\fint_{B_{\sigma_\nu}}G^{lk}_{\epsilon_\rho}(\cdot,y).
\]
Then by the continuity of $\vec G_{\epsilon_\rho}(\cdot,y)$ and Lemma \ref{0108.lem1}, we have 
\[
\lim_{\rho\to \infty}\lim_{\nu\to \infty}I^{kl}_{\rho,\nu}=\lim_{\rho\to \infty}G^{lk}_{\epsilon_\rho}(x,y)=G^{lk}(x,y).
\]
Similarly,  we get
\[
\lim_{\rho\to \infty}\lim_{\nu\to \infty}I^{kl}_{\rho,\nu}=\lim_{\rho\to \infty}\fint_{\Omega_{\epsilon_\rho}(y)}(G^*)^{kl}(\cdot,x)=(G^*)^{kl}(y,x).
\]
We have thus shown that 
\[
G^{lk}(x,y)=(G^*)^{kl}(y,x), \quad \forall x,\, y\in \Omega, \quad x\neq y,
\]
which gives the identity \eqref{1226.eq1a}.
Therefore, we get from \eqref{jk.eq2b} that  
\begin{align*}
 G^{lk}_\epsilon(x,y)&=\lim_{\nu\to \infty}\fint_{B_{\sigma_\nu}(x)}G^{lk}_\epsilon(\cdot,y)=\lim_{\nu\to\infty}\fint_{B_\epsilon(y)}(G^*_{\sigma_\nu})^{kl}(\cdot,x)\\
 &=\fint_{B_\epsilon(y)}(G^*)^{kl}(\cdot,x)=\fint_{B_\epsilon(y)}G^{lk}(x,\cdot), \quad \epsilon\in (0,d_y],
\end{align*}
and 
\begin{equation}		\label{0109.eq2a}
\lim_{\epsilon\to 0}G^{lk}_\epsilon(x,y)=G^{lk}(x,y), \quad \forall x,\,y\in \Omega,\quad x\neq y.
\end{equation}
The theorem is proved.
\hfill\qedsymbol

%=========================================================
\subsection{Proof of Theorem \ref{0110.thm1}}
%=========================================================
The proof is based on $L^q$-estimates for Stokes systems with $\VMO$ coefficients.
In this proof, we assume that $x_0\in \Omega$ and $0<R\le \min\{d_{x_0},1\}$, and denote $B_r=B_r(x_0)$ for $r>0$.

\begin{lemma}		\label{170112@lem1}
Let $q>n$, $0<\rho<r\le R\le 1$, and $(\vec v,b)\in W^{1,q}(B_r)^n\times L^q(B_r)$ satisfy
$$
\left\{
\begin{aligned}
\sL\vec v+Db=0 \quad \text{in }\, B_r,\\
\dv \vec v=0 \quad \text{in }\, B_r,
\end{aligned}
\right.
$$
where the coefficients of $\sL$ belong to the class of $\VMO$.
Then we have 
$$
\|D\vec v\|_{L^q(B_{\rho})}+\frac{1}{r-\rho}\|\vec v\|_{L^q(B_\rho)}\le \frac{C}{r-\rho}\left(\|D\vec v\|_{L^{nq/(n+q)}(B_{r})}+\frac{1}{r-\rho}\|\vec v\|_{L^{nq/(n+q)}(B_{r})}\right),
$$
where $C$ depends on $n$, $\lambda$, $q$, and the $\VMO$ modulus of the coefficients.
\end{lemma}

\begin{proof}
Let $\tau=(\rho+r)/2$ and  $\eta$ be a smooth function in $\bR^2$ such that 
$$
0\le \eta\le 1, \quad \eta\equiv 1 \,\text{ on }\, B_{\rho}, \quad \supp \eta\subset B_{\tau}, \quad |D\eta|\le C(r-\rho)^{-1}, 
$$
Denote $b_0=(b)_{B_r}$ and observe that  $(\eta\vec v,\eta (b-b_0))$ satisfies
$$
\left\{
\begin{aligned}
\sL(\eta \vec v)+D(\eta (b-b_0))=(b-b_0)D\eta-A_{\alpha\beta} D_\beta \vec v D_\alpha \eta-D_\alpha(A_{\alpha\beta}D_\beta \eta \vec v) &\quad \text{in }\, B_r,\\
\dv (\eta \vec v)=D\eta \cdot \vec v &\quad \text{in }\, B_r,\\
\eta \vec v=0 &\quad \text{on }\partial B_r.
\end{aligned}
\right.
$$
By Corollary \ref{0129.cor2} with scaling, we have
$$
\|D\vec v\|_{L^q(B_\rho)}\le \frac{C}{r-\rho}\big(\|b-b_0\|_{L^{nq/(n+q)}(B_r)}+\|D\vec v\|_{L^{nq/(n+q)}(B_r)}+\|\vec v\|_{L^q(B_\tau)}\big),
$$
where $C$ depends on $n$, $\lambda$, $q$, and the $\VMO$ modulus of the coefficients.
Note that 
\begin{equation}		\label{170112@eq10}
\|\vec v\|_{L^q(B_{r_1})}\le \frac{C}{r_1}\|\vec v\|_{L^{nq/(n+q)}(B_{r_1})}+C\|D\vec v\|_{L^{nq/(n+q)}(B_{r_1})}
\end{equation}
for $0<r_1\le r$.
Combining the above two estimates we have 
\begin{equation}		\label{170112@eq1}
\begin{aligned}
&\|D\vec v\|_{L^q(B_{\rho})}+\frac{1}{r-\rho}\|\vec v\|_{L^q(B_\rho)}\\
&\quad \le \frac{C}{r-\rho}\left(\|b-b_0\|_{L^{nq/(n+q)}(B_r)}+\|D\vec v\|_{L^{nq/(n+q)}(B_{r})}+\frac{1}{r-\rho}\|\vec v\|_{L^{nq/(n+q)}(B_{r})}\right).
\end{aligned}
\end{equation}

Set $s=nq/(n+q)$ and $\tilde{b}=\sgn(b-b_0)|b-b_0|^{s-1}\in L^{s/(s-1)}(B_r)$.
There exists $\vec \phi\in W^{1,s/(s-1)}_0(B_r)^n$ such that (see Remark \ref{K0122.rmk2})
$$
\dv \vec \phi= \tilde{b}-(\tilde{b})_{B_r} \, \text{ in }\, B_r, \quad \|D\vec \phi\|_{L^{s/(s-1)}(B_r)}\le C(n,q)\|\tilde{b}\|_{L^{s/(s-1)}(B_r)}.
$$
Using $\vec \phi$ as a test function, we obtain 
$$
\int_{B_r}|b-b_0|^s\,dx=\int_{B_r}(b-b_0)\dv \vec \phi\,dx=\int_\Omega A_{\alpha\beta}D_\beta \vec v\cdot D_\alpha \vec \phi\,dx,
$$
which implies that 
$$
\|b-b_0\|_{L^s(B_r)}^s\le C(n,\lambda,q)\|D\vec v\|_{L^s(B_r)}\|b-b_0\|_{L^s(B_r)}^{s-1}.
$$
From this together with \eqref{170112@eq1}, we get the desired estimate.
\end{proof}

Now we are ready to prove Theorem  \ref{0110.thm1}.
Let $(\vec u, p)\in W^{1,2}(B_R)^n\times L^2(B_R)$ satisfy \eqref{160907@eq2}.
Let $q>n$, $0<r\le R$, and $\rho=r/4$.
Set
$$
q_i=\frac{nq}{n+qi}, \quad r_i=\rho+\frac{ri}{4m}, \quad i\in \{0,\ldots,m\},
$$
where $m$ is the smallest integer such that $m\ge n(1/2-1/q)$.
Then by applying Lemma \ref{170112@lem1} iteratively, we see that $(\vec u, p)\in W^{1,q}(B_\rho)^n\times L^q(B_\rho)$ and
$$
\|D\vec u\|_{L^q(B_\rho)}+\frac{4m}{r}\|\vec u\|_{L^q(B_\rho)}\le \left(\frac{Cm}{r}\right)^m\left(\|D\vec u\|_{L^{q_m}(B_{r_m})}+\frac{4m}{r}|\vec u\|_{L^{q_m}(B_{r_m})}\right).
$$
Using H\"older's inequality and Lemma \ref{1006@lem1}, we have 
\begin{align*}
\|D\vec u\|_{L^q(B_{r/4})}+\frac{1}{r}\|\vec u\|_{L^q(B_{r/4})}&\le \left(\frac{Cmr}{r}\right)^mr^{n(1/q-1/2)}\left(\|D\vec u\|_{L^{2}(B_{r/2})}+\frac{1}{r}\|\vec u\|_{L^{2}(B_{r/2})}\right)\\
&\le \frac{Cr^{n(1/q-1/2)}}{r}\|\vec u\|_{L^2(B_r)}.
\end{align*}
By the Sobolev inequality with scaling, we get 
$$
[\vec u]_{C^{1-n/q}(B_{r/4}(x_0))}\le Cr^{-1+n/q}\left(\fint_{B_r(x_0)}|\vec u|^2\,dx\right)^{1/2},
$$
where $C$ depends on $n$, $\lambda$, $q$, and the $\VMO$ modulus of the coefficients.
Since the above inequality holds for all $x_0\in \Omega$ and  $0<r\le R\le \min\{d_{x_0},1\}$, we conclude that 
$$
[\vec u]_{C^{1-n/q}(B_{R/2}(x_0))}\le Cr^{-1+n/q}\left(\fint_{B_R(x_0)}|\vec u|^2\,dx\right)^{1/2}.
$$
This completes the proof of Theorem \ref{0110.thm1}.
\hfill\qedsymbol

%=========================================================
\subsection{Proof of Theorem \ref{1226.thm2}}
%=========================================================
For $y\in \Omega$ and $\epsilon>0$, let $(\vec G_{\varepsilon}(\cdot,y), \vec \Pi_{\varepsilon}(\cdot,y))$ be the averaged Green function on $\Omega$ as constructed in Section \ref{0108.sec1}, and let $\vec G^{\cdot k}_{\varepsilon}(\cdot,y)$ be the $k$-th column of $\vec G_\varepsilon(\cdot,y)$.
Recall that $(\vec G_{\varepsilon}^{\cdot k}(\cdot,y), \Pi^k_\varepsilon(\cdot,y))$ satisfies
\[
\left\{
\begin{aligned}
\sL\vec G_\epsilon^{\cdot k}(\cdot,y)+D\vec \Pi_\epsilon^k(\cdot,y)=\vec g_k &\quad \text{in }\, \Omega,\\
\dv \vec G_\epsilon^{\cdot k}(\cdot,y)=0 &\quad \text{in }\, \Omega,
\end{aligned}
\right.
\]
where 
$$
\vec g_k=\frac{1}{|\Omega_{\varepsilon}(y)|}1_{\Omega_{\varepsilon}(y)}\vec e_k.
$$
By $(\bf{A2})$, we obtain for any $x_0\in \Omega$ and $0<r<\diam\Omega$ that 
$$
\|\vec G^{\cdot k}_\varepsilon(\cdot,y)\|_{L^\infty(\Omega_{r/2}(x_0))}\le A_2\left(r^{-n/2}\|\vec G^{\cdot k}_\varepsilon(\cdot,y)\|_{L^2(\Omega_r(x_0))}+r^2\|\vec g_k\|_{L^\infty(\Omega_r(x_0))}\right).
$$
Applying a standard argument (see, for instance, \cite[pp. 80-82]{MR1239172}), we have 
$$
\|\vec G^{\cdot k}_\varepsilon(\cdot,y)\|_{L^\infty(\Omega_{r/2}(x_0))}\le C\left(r^{-n}\|\vec G^{\cdot k}_\varepsilon(\cdot,y)\|_{L^1(\Omega_r(x_0))}+r^2\|\vec g_k\|_{L^\infty(\Omega_r(x_0))}\right),
$$
where $C=C(n,A_2)$.
We remark that if $B_r(x_0)\cap B_\varepsilon(y)=\emptyset$, then 
\begin{equation}		\label{170418@eq1a}
\|\vec G^{\cdot k}_\varepsilon(\cdot,y)\|_{L^\infty(\Omega_{r/2}(x_0))}\le Cr^{-n}\|\vec G^{\cdot k}_\varepsilon(\cdot,y)\|_{L^1(\Omega_r(x_0))}.
\end{equation}

Next, let $y\in \Omega$ and $R\in (0,\diam\Omega)$.
Assume that $\vec f\in L^\infty(\Omega)^n$  with $\supp f\subset\Omega_R(y)$.
Let $(\vec u,p)\in W^{1,2}_0(\Omega)^n\times L^2_0(\Omega)$ be the weak solution of the problem
\[
\left\{
\begin{aligned}
\sL^*\vec u+Dp=\vec f &\quad \text{in }\, \Omega,\\
\dv \vec u=0 &\quad \text{in }\, \Omega.
\end{aligned}
\right.
\]
By $(\bf{A2})$, the Sobolev inequality, and \eqref{1227.eq1}, we have 
$$
\|\vec u\|_{L^\infty(\Omega_{R/2}(y))}\le A_2\left(R^{-n/2}\|\vec u\|_{L^2(\Omega_R(y))}+R^2\|\vec f\|_{L^\infty(\Omega_R(y))}\right)\le CR^2\|\vec f\|_{L^\infty(\Omega_R(y))},
$$
where $C=C(n,\lambda,K_0, K_1,R_1,A_2)$.
Using this together with the fact that  (see, for instance, \eqref{160906@eq10})
$$
\fint_{\Omega_{\varepsilon}(y)}u^k\,dx=\int_{\Omega_R(y)}G^{ik}_{\varepsilon}(\cdot,y)f^i\,dx,
$$
we have 
$$
\left|\int_{\Omega_R(y)}G^{ik}_\varepsilon(\cdot,y)f^i\,dx\right|\le CR^2\|\vec f\|_{L^\infty(\Omega_R(y))}
$$
for all $0<\varepsilon<R/2$ and $\vec f\in L^\infty(\Omega_R(y))^n$.
Taking 
$$
f^i(x)=1_{\Omega_R(y)} \sgn (G^{ik}_\varepsilon(x,y)),
$$
 we have 
\begin{equation}		\label{170418@eq1b}
\norm{\vec G^{\cdot k}_\epsilon(\cdot,y)}_{L^1(\Omega_R(y))}\le CR^2,  \quad \forall \epsilon\in (0,R/2).
\end{equation}

Now we are ready to prove the theorem.
Let $x,\,y\in \Omega$ and $x\neq y$ and take $R=3r=3\abs{x-y}/2$.
Then by \eqref{170418@eq1a} and \eqref{170418@eq1b}, we obtain for $\epsilon\in (0,r)$ that 
\begin{align}	
\nonumber
\abs{\vec G_\epsilon(x,y)}&\le Cr^{-n}\norm{\vec G_\epsilon(\cdot,y)}_{L^1(\Omega_r(x))}\le CR^{-n}\norm{\vec G_\epsilon(\cdot,y)}_{L^1(\Omega_R(y))}\le CR^{2-n},
\end{align}
where $C=C(n,\lambda,K_0, K_1,R_1,A_2)$.
Therefore, by letting $\epsilon\to 0$ and using \eqref{0109.eq2a}, we obtain that
\[
\abs{\vec G(x,y)}\le C\abs{x-y}^{2-n}.
\]
The theorem is proved.
\hfill\qedsymbol

%=========================================================
\subsection{Proof of Theorem \ref{0110.thm2}}		\label{0304.sec1}
%=========================================================
Let $(\vec u, p)\in W^{1,2}_0(\Omega)^n\times L^2_0(\Omega)$ be the weak solution of 
\eqref{170418@eq1}.
By Corollary \ref{0129.cor2}, $\vec u$ is H\"older continuous.
To prove the theorem, we first consider the localized estimates for Stokes systems as below.

For $y\in \Omega$ and $r>0$, we denote $B_r=B_r(y)$ and $\Omega_r=\Omega_r(y)$.\\
\\
{\bf{Step 1.}}
Let  $n/(n-1)<q\le t$, $0<\rho<r<\tau$, and $\eta,\,\zeta$ be smooth functions in $\bR^n$ satisfying
\begin{align*}
0\le \eta\le 1, \quad \eta\equiv 1 \, \text{ on }\, B_\rho, \quad \supp \eta\subset B_r, \quad |D\eta|\le C(r-\rho)^{-1},\\
0\le \zeta\le 1, \quad \zeta\equiv 1 \, \text{ on }\, B_r, \quad \supp \zeta\subset B_\tau, \quad |D\zeta|\le C(\tau-r)^{-1}.
\end{align*}
Then $(\eta\vec u,\eta p)$ is the weak solution of the problem
\begin{equation*}		
\left\{
\begin{aligned}
\sL(\eta \vec u)+D(\eta p)=\eta \vec f+pD\eta- \vec\Psi-D_\alpha\vec\Phi_\alpha &\quad \text{in }\Omega,\\
\dv \eta \vec u=D\eta \cdot  \vec u &\quad \text{in }\Omega,\\
\eta \vec u=0 &\quad \text{on }\, \partial \Omega,
\end{aligned}
\right.
\end{equation*}
where 
\[
\vec \Psi=A_{\alpha\beta}D_\beta \vec uD_\alpha \eta \quad \text{and}\quad \vec \Phi_\alpha=A_{\alpha\beta}D_\beta \eta \vec u.
\]
By Corollary \ref{0129.cor2}, we have 
\begin{multline*}
\norm{\eta p-(\eta p)_{\Omega}}_{L^{q}(\supp \eta\cap \Omega)}+\norm{D\vec u}_{L^{q}(\Omega_\rho)}\le \frac{C}{r-\rho}\big(\|p\|_{L^{nq/(n+q)}(\Omega_r)}+\norm{D\vec u}_{L^{nq/(n+q)}(\Omega_r)}\big)\\
+C\left( r^{1+n/q}\norm{\vec f}_{L^\infty(\Omega_r)}+\frac{1}{r-\rho}\norm{\vec u}_{L^{q}(\Omega_r)}\right).
\end{multline*}
Using the fact that  
\begin{align*}
\|p\|_{L^{nq/(n+q)}(\Omega_r)}&=\|\zeta p-(\zeta p)_\Omega+(\zeta p)_\Omega\|_{L^{nq/(n+q)}(\Omega_r)}\\
&\le \|\zeta p-(\zeta p)_\Omega\|_{L^{nq/(n+q)}(\supp \zeta\cap \Omega)}+C(n,q)\tau^{1+n/q}|(\zeta p)_\Omega|,
\end{align*}
we have
\begin{align}			
\nonumber
&\norm{\eta p-(\eta p)_{\Omega}}_{L^{q}(\supp \eta\cap \Omega)}+\norm{D\vec u}_{L^{q}(\Omega_\rho)}\\
\nonumber
&\le \frac{C}{r-\rho}\big(\|\zeta p-(\zeta p)_{\Omega}\|_{L^{nq/(n+q)}(\supp \zeta\cap \Omega)}+\norm{D\vec u}_{L^{nq/(n+q)}(\Omega_r)}\big)\\
\label{170112@eq5b}
&\quad +\frac{C}{r-\rho}\tau^{1+n/q}|(\zeta p)_\Omega|+C\left( r^{1+n/q}\norm{\vec f}_{L^\infty(\Omega_r)}+\frac{1}{r-\rho}\norm{\vec u}_{L^{q}(\Omega_r)}\right),
\end{align}
where $C$ depends on $n$, $\lambda$, $K_0$, $R_1$, $q$, and the $\VMO$ modulus of the coefficients.\\
\\
{\bf{Step 2.}}
Let $t>n$ and $0<\rho<r<\diam\Omega$.
Set 
$$
t_i=\frac{nt}{n+ti}, \quad r_i=\rho+(r-\rho)i/m, \quad i\in\{0,\ldots,m+1\},
$$
where $m$ is the smallest integer such that $t_m\le 2$.
Let $\eta_{r_i}$, $i\in\{0,\ldots,m\}$, be smooth functions in $\bR^n$ satisfying
$$
0\le \eta_{r_i}\le 1, \quad \eta_{r_i}\equiv 1 \, \text{ on }\, B_{r_i}, \quad \supp \eta_{r_i}\subset B_{r_{i+1}}, \quad |D\eta_{r_i}|\le \frac{C(n,t)}{r-\rho}.
$$
Applying \eqref{170112@eq5b} iteratively, we have 
\begin{align*}
\|D\vec u\|_{L^{t_0}(\Omega_{r_0})}&\le C^m\left(\frac{m}{r-\rho}\right)^m\big(\|\eta_{r_m} p-(\eta_{r_m} p)_\Omega\|_{L^{t_m}(\supp \eta_{r_m}\cap \Omega)}+\|D\vec u\|_{L^{t_m}(\Omega_{r_m})}\big)\\
&\quad +\sum_{i=1}^m C^i\left(\frac{m}{r-\rho}\right)^{i}r^{1+n/t_{i-1}}|(\eta_{r_i} p)_\Omega|\\
&\quad +\sum_{i=1}^mC^i\left(\frac{m}{r-\rho}\right)^{i-1}\left(r^{1+n/t_{i-1}}\|\vec f\|_{L^\infty(\Omega_{r})}+\frac{m}{r-\rho}\|\vec u\|_{L^{t_{i-1}}(\Omega_r)}\right).
\end{align*}
Hence, by H\"older's inequality we obtain 
\begin{align*}
\|D\vec u\|_{L^{t}(\Omega_{\rho})}&\le C_0\left(\frac{r}{r-\rho}\right)^mr^{n(1/t-1/2)}\big(\|p\|_{L^{2}(\Omega_r)}+\|D\vec u\|_{L^{2}(\Omega_{r})}\big)\\
&\quad +C_0\left(\frac{r}{r-\rho}\right)^mr^{1+n/t}\|\vec f\|_{L^\infty(\Omega_r)}+C_0\left(\frac{r}{r-\rho}\right)^mr^{-1}\|\vec u\|_{L^t(\Omega_r)},
\end{align*}
where $C_0$ depends on $n$, $\lambda$, $K_0$, $R_1$, and the $\VMO$ modulus of the coefficients.
By taking $\rho=r/2$, we have 
$$
\|D\vec u\|_{L^{t}(\Omega_{r/2})}\le C_0r^{n(1/t-1/2)}\big(\|p\|_{L^{2}(\Omega_r)}+\|D\vec u\|_{L^{2}(\Omega_{r})}\big) +C_0\big(r^{1+n/t}\|\vec f\|_{L^\infty(\Omega_r)}+r^{-1}\|\vec u\|_{L^t(\Omega_r)}\big).
$$
We apply Caccioppoli's inequality (see, for instance, \cite{MR2027755}) to the above estimate to get 
\begin{equation}	\label{170112@eq8}
\|D\vec u\|_{L^{t}(\Omega_{r/4})}\le C_0\big(r^{1+n/t}\|\vec f\|_{L^\infty(\Omega_r)}+r^{-1}\|\vec u\|_{L^t(\Omega_r)}\big).
\end{equation}
{\bf{Step 3.}}
We extend $\vec u$ to $\bR^n$ by setting $\vec u\equiv 0$ on $\bR^n\setminus \Omega$.
For $y\in \Omega$ and $0<r<\diam\Omega$, we obtain by \eqref{170112@eq10} and \eqref{170112@eq8} that 
\begin{align*}		
r^{-1}\|\vec u\|_{L^t(B_{r/4})}+\|D\vec u\|_{L^t(B_{r/4})}&\le C\big(r^{1+n/t}\|\vec f\|_{L^\infty(\Omega_r)}+r^{-1}\|\vec u\|_{L^t(B_r)}\big).
\end{align*}
Using this together with the Sobolev inequality, we have 
$$
\|\vec u\|_{L^\infty(B_{r/4})}\le C\big(r^{2}\|\vec f\|_{L^\infty(\Omega_r)}+r^{-n/t}\|\vec u\|_{L^t(B_r)}\big).
$$
Since the above estimate holds for any $y\in \Omega$ and $0<r<\diam\Omega$, by using a standard argument (see, for instance, \cite[pp. 80-82]{MR1239172}), we derive 
$$
\norm{\vec u}_{L^\infty(\Omega_{r/2})} \le C\big(r^2\norm{\vec f}_{L^\infty(\Omega_r)}+r^{-n/2}\norm{\vec u}_{L^2(\Omega_r)}\big).
$$
This completes the proof of Theorem \ref{0110.thm2}.
\hfill\qedsymbol

%=========================================================
\section{$L^q$-estimates for the Stokes systems}		\label{sec_es}
%=========================================================
In this section, we consider  the $L^q$-estimate for the solution to  
\begin{equation}		\label{0123.eq3}
\left\{
\begin{aligned}
\sL\vec u+Dp=\vec f+D_\alpha\vec f_\alpha&\quad \text{in }\, \Omega,\\
\dv \vec u= g &\quad \text{in }\, \Omega.
\end{aligned}
\right.
\end{equation}
We let $\Omega$ be a domain in $\bR^n$, where $n\ge 2$. 
We denote 
\begin{equation}		\label{160928@eq1}
U:=\abs{p}+\abs{D\vec u}\quad \text{and}\quad F:=\abs{\vec f}+\abs{\vec f_\alpha}+\abs{g},
\end{equation}
and we abbreviate $B_R=B_R(0)$ and $B_R^+=B_R^+(0)$, etc.

%=========================================================
\subsection{Main results}		\label{123.sec1}
%=========================================================

\begin{A} There is a constant $R_0\in (0,1]$ such that the following hold.
\begin{enumerate}[(a)]
\item
For any $x\in\overline{\Omega}$ and $R\in(0,R_0]$ so that either $B_R(x)\subset \Omega$ or $x\in \partial \Omega$, we have 
\begin{equation*}
\fint_{B_R(x)}\bigabs{A_{\alpha\beta}-(A_{\alpha\beta})_{B_R(x)}}\le \gamma.
\end{equation*}

\item ($\gamma$-Reifenberg flat domain)
For any $x\in \partial \Omega$ and $R\in (0,R_0]$, 
there is a spatial coordinate systems depending on $x$ and $R$ such that in this new coordinate system, we have 
\begin{equation*}		
\set{y:x_1+\gamma R<y_1}\cap B_R(x)\subset \Omega_R(x)\subset \set{y:x_1-\gamma R<y_1}\cap B_R(x). 
\end{equation*}
\end{enumerate}
\end{A}
 
\begin{theorem}		\label{0123.thm1}
Assume the condition $(\bf{D})$ in Section \ref{1006@sec2} and $\diam(\Omega)\le K_0$.
For $2<q<\infty$, there exists a constant $\gamma>0$, depending only on $n$, $\lambda$, and $q$, such that, under the condition $(\bf{A3}\,(\gamma))$, the following holds:
if $(\vec u,p)\in W^{1,q}_0(\Omega)^n\times L^q_0(\Omega)$ satisfies \eqref{0123.eq3},
then we have  
\begin{equation}		\label{1204.eq2}
\norm{p}_{L^q(\Omega)}+\norm{D\vec u}_{L^q(\Omega)}\le C\big(\norm{\vec f}_{L^q(\Omega)}+\norm{\vec f_\alpha}_{L^q(\Omega)}+\norm{g}_{L^q(\Omega)}\big),
\end{equation}
where $C=C(n,\lambda,K_0,q, A,R_0)$.
\end{theorem}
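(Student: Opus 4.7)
The plan is to follow the mean-oscillation / level-set scheme of Dong--Kim \cite{DK11a}, adapted to accommodate the pressure. Throughout, I would write $U=\abs{p}+\abs{D\vec u}$ and $F=\abs{\vec f}+\abs{\vec f_\alpha}+\abs{g}$ as in \eqref{160928@eq1}, and let $\cM$ denote the Hardy--Littlewood maximal operator (applied after zero-extension outside $\Omega$). Since \eqref{0123.eq3} is linear, after using Lemma~\ref{122.lem1} to produce one solution in $W^{1,2}_0\times L^2_0$ and a standard density argument, it suffices to derive \eqref{1204.eq2} as an a priori bound.

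First I would establish the two building blocks on balls and on Reifenberg half-balls. \textbf{(i)~Constant coefficient $L^\infty$ bound.} If $(\vec v,\pi)$ solves $-D_\alpha((A_{\alpha\beta})_{B_r}D_\beta\vec v)+D\pi=0$, $\dv\vec v=0$ in $B_r(x_0)$ or in $B_r^+(x_0)$ with $\vec v=0$ on the flat boundary piece, then $\norm{D\vec v}_{L^\infty(B_{r/2})}+\norm{\pi-(\pi)_{B_{r/2}}}_{L^\infty(B_{r/2})}\le C r^{-n/2}(\norm{D\vec v}_{L^2(B_r)}+\norm{\pi-(\pi)_{B_r}}_{L^2(B_r)})$; this is obtained by differentiating the frozen-coefficient system, applying the Caccioppoli inequality of Lemma~\ref{1006@lem1} (and its boundary analogue on half-balls), and iterating. \textbf{(ii)~Perturbation.} Given $(\vec u,p)$ and a ball $B_r(x_0)$ with $r\le R_0$, decompose $(\vec u,p)=(\vec u_1,p_1)+(\vec u_2,p_2)$, where $(\vec u_1,p_1)\in W^{1,2}_0(\Omega_r(x_0))^n\times L^2_0(\Omega_r(x_0))$ solves
\begin{equation*}
-D_\alpha((A_{\alpha\beta})_{B_r}D_\beta \vec u_1)+Dp_1=\vec f+D_\alpha\bigl(\vec f_\alpha+(A_{\alpha\beta}-(A_{\alpha\beta})_{B_r})D_\beta\vec u\bigr),\quad \dv\vec u_1=g-(g)_{\Omega_r(x_0)},
\end{equation*}
which is solvable by Lemma~\ref{122.lem1}, and $(\vec u_2,p_2)$ is the remainder. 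The $L^2$-bound \eqref{122.eq1a} together with $(\bf{A3}(\gamma))$(a) (or an $L^{q_0}$-reverse H\"older step, via Lemma~\ref{0118.lem1}, to promote $\gamma$ to $\gamma^\sigma$) yields $\norm{|p_1|+|D\vec u_1|}_{L^2(\Omega_r(x_0))}\le C\gamma^{\sigma}\norm{U}_{L^2(\Omega_r(x_0))}+C\norm{F}_{L^2(\Omega_r(x_0))}\cdot r^{\text{appropriate power}}$, while $(\vec u_2,p_2)$ inherits $L^\infty$ oscillation control from~(i). Near $\partial\Omega$ the same decomposition is carried out after the Reifenberg coordinate change provided by $(\bf{A3}(\gamma))$(b), noting that the resulting domain sits between two half-balls with error $\gamma R$.

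Combining (i) and (ii), together with Lemma~\ref{160907@lem1} in the form of a reverse H\"older step to replace the $L^2$ averages of $U$ by $L^{q_0}$ averages for some $q_0>2$, produces the central mean-oscillation estimate: for every $x_0\in\overline\Omega$, $r\in(0,R_0]$, and every $\kappa\ge 8$,
\begin{equation*}
\fint_{\Omega_{r/\kappa}(x_0)}\bigabs{U-(U)_{\Omega_{r/\kappa}(x_0)}}\,dx\le C\bigl(\kappa^{n/2}\gamma^{\sigma}+\kappa^{-\mu}\bigr)\biggl(\fint_{\Omega_r(x_0)}U^2\,dx\biggr)^{1/2}+C\kappa^{n/2}\biggl(\fint_{\Omega_r(x_0)}F^2\,dx\biggr)^{1/2}.
\end{equation*}
From here the passage to $L^q$ proceeds by the level-set argument of \cite{DK11a}: one first chooses $\kappa$ so that $\kappa^{-\mu}\le\varepsilon$ and then $\gamma$ so that $\kappa^{n/2}\gamma^{\sigma}\le\varepsilon$, deduces a good-$\lambda$ inequality of the form
\begin{equation*}
\bigabs{\set{x\in\Omega:\cM(U^2)(x)>C_0^2 t^2}}\le C_1\varepsilon\bigabs{\set{x\in\Omega:\cM(U^2)(x)>t^2}}+C_1\bigabs{\set{x\in\Omega:\cM(F^2)(x)>\delta^2 t^2}}
\end{equation*}
via the Vitali / \emph{crawling of ink spots} lemma of \cite{KS80}, multiplies by $t^{q-1}$ and integrates (absorbing the first term on the right into the left using weak-type $(1,1)$ of $\cM$ and $\varepsilon$ small relative to $q$), and applies the $L^{q/2}$-boundedness of $\cM$ to arrive at $\norm{U}_{L^q(\Omega)}\le C\norm{F}_{L^q(\Omega)}$, which is \eqref{1204.eq2}.

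The main obstacle is the unknown pressure: unlike the elliptic case treated in \cite{DK11a}, the quantity to be estimated is the pair $(D\vec u,p)$ rather than a derivative of a scalar, so Caccioppoli, reverse H\"older, and the constant-coefficient $L^\infty$ bound must all be carried out simultaneously for $\abs{D\vec u}$ and $\abs{p-(p)_B}$, with the pressure part controlled by testing with a Bogovskii-type corrector (Remark~\ref{K0122.rmk2}) as in the proof of Lemma~\ref{1006@lem1}. A second delicate point is the boundary piece: the Reifenberg condition only gives a half-space approximation up to an error of order $\gamma R$, so the constant-coefficient $L^\infty$ estimate must be obtained on true half-balls and then transferred by an $\varepsilon$-perturbation of the domain, which requires a small further loss encoded in the exponent $\sigma$ above; ensuring that this loss is still summable in the level-set iteration dictates the final choice of $\gamma$ in terms of $n$, $\lambda$, and $q$.
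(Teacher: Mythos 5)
Your proposal follows essentially the same route as the paper: freeze the coefficients, split $(\vec u,p)=(\vec u_1,p_1)+(\vec u_2,p_2)$ with the commutator $(A_{\alpha\beta}-(A_{\alpha\beta})_{B_r})D_\beta\vec u$ and the Bogovskii corrector absorbing the divergence constraint, invoke a constant-coefficient $L^\infty$ bound for $U=\abs{p}+\abs{D\vec u}$ on balls and half-balls, and close via the level-set/``crawling of ink spots'' argument and the Hardy--Littlewood maximal function theorem. The one substantive divergence is your treatment of the commutator: condition $(\mathbf{A3}(\gamma))$(a) alone gives no smallness against an $L^2$ average of $D\vec u$, and your proposed fix via reverse H\"older (Lemma \ref{0118.lem1}) would require a boundary Gehring-type inequality for the inhomogeneous system, which is not available here; the paper instead exploits the a priori membership $(\vec u,p)\in W^{1,q}_0\times L^q_0$ and bounds the commutator by $\gamma^{1/\nu'}(U^\nu)^{1/\nu}_{B_R}$ with a fixed $2<\nu<q$, so that the level sets are measured against $\cM(U^\nu)^{1/\nu}$ rather than $\cM(U^2)^{1/2}$ and no self-improvement step is needed. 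You correctly identify the boundary difficulty (the Reifenberg domain is only $\gamma R$-close to a half-space), but the actual transfer mechanism --- the cutoff $\chi(x_1)$ vanishing for $x_1\le\gamma R$ combined with the Hardy-type inequality of Lemma \ref{0323.lem1} to control $pD\chi$ and $D\chi\cdot\vec u$ --- is the technical heart of the boundary case and would need to be supplied; with that and the $L^\nu$ substitution your argument matches the paper's.
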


\begin{remark}
We remark that  $\gamma$-Reifenberg flat domains with a small constant $\gamma>0$ satisfy the condition $(\bf{D})$.
Indeed, $\gamma$-Reifenberg flat domains with sufficiently small $\gamma$ are John domains (and NTA-domains) that satisfy the condition $(\bf{D})$.
We refer to \cite{MR2186550, MR2263708, MR1446617} for the details.

\end{remark}

Since Lipschitz domains with a small Lipschitz constant are Refineberg flat, 
we obtain the following result from Theorem \ref{0123.thm1}.

\begin{corollary}		\label{0129.cor2}
Let $\Omega$ be a domain in $\bR^n$ with $\diam(\Omega)\le K_0$, where $n\ge 2$.
Assume that the coefficients of $\sL$ belong to the class of $\VMO$.
For $1<q<\infty$, there exists a constant $L=L(n,\lambda,q)>0$  such that, under the condition $(\bf{A0})$ with $R_1\in (0,1]$ and $K_1\in (0, L]$, the following holds:
if $q_1\in (1,\infty)$, $q_1\ge \frac{qn}{q+n}$, $\vec f\in L^{q_1}(\Omega)^n$, $\vec f_\alpha\in L^q(\Omega)^n$, and $g\in L^q_0(\Omega)$, there exists a unique solution $(\vec u,p)\in W^{1,q}_0(\Omega)^n\times L^q_0(\Omega)$ of the problem \eqref{0123.eq3}.
Moreover, we have
$$
\norm{p}_{L^q(\Omega)}+\norm{D\vec u}_{L^q(\Omega)}\le C\big(\norm{\vec f}_{L^{q_1}(\Omega)}+\norm{\vec f_\alpha}_{L^q(\Omega)}+\norm{g}_{L^q(\Omega)}\big),
$$
where the constant $C$ depends on $n$, $\lambda$, $K_0$, $R_1$, $q$, and the $\VMO$ modulus of the coefficients.
\end{corollary}

\begin{proof}
It suffices to prove the corollary with $\vec f=(f^1,\ldots,f^n)=0$.
Indeed, by the solvability of the divergence equation in Lipschitz domains, there exist $\vec \phi_i\in W^{1,q_1}_0(\Omega)^n$ such that 
$$
\dv{\vec \phi_i}=f^i-(f^i)_\Omega \quad \text{in }\, \Omega, \quad  \|D\vec \phi_i\|_{L^{q_1}(\Omega)}\le C\|f^i\|_{L^{q_1}(\Omega)},
$$
where $C=C(n,\lambda,K_0, R_1,q)$.
If we define $\vec \Phi_\alpha=(\Phi_\alpha^1,\ldots, \Phi^n_\alpha)$ by
$$
\Phi^i_\alpha(x)=\varphi^\alpha_i(x)+\frac{(f^i)_\Omega}{n}x_\alpha,
$$
then we have that 
$$
\sum_{\alpha=1}^nD_\alpha \vec \Phi_\alpha=\vec f
$$
and 
$$
\|\vec \Phi_\alpha \|_{L^q(\Omega)}\le C\|D\vec\Phi_\alpha\|_{L^{q_1}(\Omega)}\le C\|\vec f\|_{L^{q_1}(\Omega)}.
$$

Due to  Lemma \ref{122.lem1}, it is enough to consider the case $q\neq 2$.\\
\\
{\bf{Case 1.}} $q>2$.
Let $\gamma=\gamma(n,\lambda,q)$ and $M=M(n,q)$ be constants in Theorem \ref{0123.thm1} and \cite[Theorem 2.1]{MR1313554}, respectively.
Set $L=\min\{\gamma, M\}$.
If $K_1\in (0,L]$, 
then by Theorem \ref{0123.thm1}, the method of continuity, and the $L^q$-solvability of the Stokes systems with simple coefficients (see \cite[Theorem 2.1]{MR1313554}), 
there exists a unique solution $(\vec u,p)\in W^{1,q}_0(\Omega)^n\times L^q_0(\Omega)$ of the problem \eqref{0123.eq3} with $\vec f=0$.\\
\\
{\bf{Case 2.}} $1<q<2$.
We use the duality argument.
Set $q_0=\frac{q}{q-1}$, and let $L=L(n,\lambda,q_0)$ and $M=M(n,q)$ be constants from Case 1 and \cite[Theorem 2.1]{MR1313554}, respectively.
Assume that $K_1\le L$ and  $(\vec u, p)\in W^{1,q}_0(\Omega)^n\times L^q_0(\Omega)$ satisfies \eqref{0123.eq3} with $\vec f=0$.
For $\vec h_\alpha\in L^{q_0}(\Omega)^n$, there exists $(\vec v,\pi)\in W^{1,q_0}_0(\Omega)^n\times L^{q_0}_0(\Omega)$ such that 
\begin{equation*}		
\left\{
\begin{aligned}
\sL^*\vec v+D\pi=D_\alpha\vec h_\alpha&\quad \text{in }\, \Omega,\\
\dv \vec v=0 &\quad \text{in }\, \Omega,
\end{aligned}
\right.
\end{equation*}
where $\sL^*$ is the adjoint operator of $\sL$.
Then we have 
\begin{align*}
\int D_\alpha \vec u\cdot \vec h_\alpha\,dx&=-\int_\Omega A_{\alpha\beta}D_\beta \vec u \cdot D_\alpha \vec v\,dx+\int_\Omega \pi \dv \vec u\,dx\\
&=\int_\Omega \vec f_\alpha \cdot D_\alpha \vec v\,dx+\int_\Omega \pi g\,dx,
\end{align*}
which implies that 
$$
\left|\int D_\alpha \vec u\cdot \vec h_\alpha\,dx\right|\le C\big(\|\vec f_\alpha \|_{L^q(\Omega)}+\|g\|_{L^q(\Omega)}\big)\|\vec h_\alpha\|_{L^{q_0}(\Omega)},
$$
where the constant $C$ depends on $n$, $\lambda$, $K_0$, $R_1$, $q$, and the $\VMO$ modulus of the coefficients.
Since $\vec h_\alpha$ was arbitrary, it follows that 
\begin{equation}		\label{1007@e1}
\norm{D\vec u}_{L^q(\Omega)}\le C\big(\norm{\vec f_\alpha }_{L^q(\Omega)}+\norm{g}_{L^q(\Omega)}\big).
\end{equation}
To estimate $p$, let $w\in L^{q_0}(\Omega)$ and $w_0=w-(w)_\Omega$.
Then by Remark \ref{K0122.rmk2}, there exists $\vec \phi\in W^{1,q_0}(\Omega)^n$ such that 
\[
\dv \vec \phi=w_0 \quad \text{in }\, \Omega, \quad \norm{\vec \phi}_{W^{1,q_0}(\Omega)}\le C\norm{w_0}_{L^{q_0}(\Omega)}.
\]
By testing $\vec \phi$ in \eqref{0123.eq3}, it is easy to see that 
\begin{align*}
\Abs{\int_\Omega pw\,dx}&=\Abs{\int_\Omega pw_0\,dx}\\
&\le C\left(\norm{D\vec u}_{L^q(\Omega)}+\norm{\vec f_\alpha}_{L^q(\Omega)}\right)\norm{w_0}_{L^{q_0}(\Omega)}\\
&\le C\left(\norm{D\vec u}_{L^q(\Omega)}+\norm{\vec f_\alpha}_{L^q(\Omega)}\right)\norm{w}_{L^{q_0}(\Omega)}.
\end{align*}
This together with \eqref{1007@e1} yields 
$$
\|p\|_{L^q(\Omega)}+\norm{D\vec u}_{L^q(\Omega)}\le C\big(\norm{\vec f_\alpha }_{L^q(\Omega)}+\norm{g}_{L^q(\Omega)}\big).
$$
Using the above $L^q$-estimate, the method of continuity, and the $L^q$-solvability of the Stokes systems with simple coefficients, 
there exists a unique solution $(\vec u,p)\in W^{1,q}_0(\Omega)^n\times L^q_0(\Omega)$ of the problem \eqref{0123.eq3} with $\vec f=0$.
\end{proof}

%=========================================================
\subsection{Auxiliary results}
%=========================================================

\begin{lemma}		\label{160924@lem1}
Recall the notation \eqref{160928@eq1}.
Suppose that the coefficients of $\sL$ are constants.
Let $k$ be a constant.
\begin{enumerate}[$(a)$]
\item
If $(\vec u, p)\in W^{1,2}(B_R)^n\times L^2(B_R)$ satisfies
\begin{equation*}		
\left\{
\begin{aligned}
\sL\vec u+Dp=0&\quad \text{in }\, B_{R},\\
\dv \vec u= k &\quad \text{in }\, B_{R},
\end{aligned}
\right.
\end{equation*}
then there exists a constant $C=C(n,\lambda)$ such that 
\begin{equation}		\label{160927@eq4}
\norm{U}_{L^\infty(B_{R/2})}\le CR^{-n/2}\norm{U}_{L^2(B_R)}+C\abs{k}.
\end{equation}
\item
If $(\vec u, p)\in W^{1,2}(B_R^+)^n\times L^2(B_R^+)$ satisfies 
\begin{equation*}		
\left\{
\begin{aligned}
\sL\vec u+Dp=0&\quad \text{in }\, B_{R}^+,\\
\dv \vec u= k &\quad \text{in }\, B_{R}^+,\\
\vec u=0 &\quad \text{on }\, B_R\cap \set{x_1=0},
\end{aligned}
\right.
\end{equation*}
then there exists a constant $C=C(n,\lambda)$ such that 
\begin{equation}		\label{160927@eq3}		
\norm{U}_{L^\infty(B_{R/2}^+)}\le CR^{-n/2}\norm{U}_{L^2(B_R^+)}+C\abs{k}.
\end{equation}
\end{enumerate}
\end{lemma}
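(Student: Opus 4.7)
My plan is to reduce both parts to the divergence-free, homogeneous setting by subtracting an explicit affine corrector, then invoke the standard interior (respectively boundary) $L^\infty$-regularity for constant-coefficient Stokes systems.

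For part (a) I would take $\vec w(x) := \frac{k}{n}\,x$, and for part (b) I would take $\vec w(x) := (k x_1, 0, \ldots, 0)$. In both cases $\dv \vec w = k$ and $D\vec w$ is a constant matrix with $|D\vec w| \le C(n)|k|$. Since the coefficients of $\sL$ are constant, $A_{\alpha\beta} D_\beta \vec w$ is constant, so $\sL \vec w = 0$ identically. In case (b), $\vec w$ moreover vanishes on $\{x_1 = 0\}$, so the flat Dirichlet condition is preserved. Setting $\tilde{\vec u} := \vec u - \vec w$, the pair $(\tilde{\vec u}, p)$ then satisfies a divergence-free homogeneous Stokes system in $B_R$ (resp.\ in $B_R^+$ with $\tilde{\vec u}|_{\{x_1 = 0\} \cap B_R} = 0$) with the same constant coefficients.

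For this reduced problem I would appeal to the classical estimate
\[
\|p\|_{L^\infty(B_{R/2}^{(+)})} + \|D\tilde{\vec u}\|_{L^\infty(B_{R/2}^{(+)})} \le C R^{-n/2}\big(\|p\|_{L^2(B_R^{(+)})} + \|D\tilde{\vec u}\|_{L^2(B_R^{(+)})}\big),
\]
with $C = C(n,\lambda)$. The interior version follows by iterating the Caccioppoli inequality of Lemma \ref{1006@lem1} together with the Sobolev embedding, using that any derivative $(D^\gamma \tilde{\vec u}, D^\gamma p)$ again solves a homogeneous constant-coefficient Stokes system (cf.\ \cite[Proposition 1.9, p.~186]{Gia82}). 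The boundary version is obtained analogously via tangential difference quotients to bootstrap tangential regularity, after which normal regularity is recovered algebraically from the equation, since strong ellipticity \eqref{ubc} forces the matrix $(a^{ij}_{11})_{ij}$ to be invertible, allowing one to solve for $D_1^2 \vec u$ in terms of tangential second derivatives of $\vec u$ and $Dp$.

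Finally, combining the displayed estimate with the pointwise bound $|D\vec u| \le |D\tilde{\vec u}| + C|k|$ and with $\|D\vec w\|_{L^2(B_R^{(+)})} \le C|k|R^{n/2}$ yields \eqref{160927@eq4} and \eqref{160927@eq3}. The main subtlety is the boundary $L^\infty$-bound for $p$: unlike $\tilde{\vec u}$, the pressure carries no boundary condition and is determined only up to an additive constant, so the bootstrap must be executed on the oscillation $p - (p)_{B_R^+}$ and the constant mean absorbed via $|(p)_{B_R^+}| \le C R^{-n/2}\|p\|_{L^2(B_R^+)}$. Once this is handled, the remaining steps are routine Caccioppoli/Sobolev iteration.
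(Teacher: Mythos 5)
Your proposal is correct in substance and ends up in the same place as the paper, which does not actually write out a proof of this lemma: it simply refers to Giaquinta--Modica (Theorem 1.10 and Theorem 2.8 of \cite{Gia82}) and to \cite{DK16} for the details, remarking only on the pressure normalization, which is exactly the subtlety you flag at the end. Your explicit corrector subtraction ($\vec w=\tfrac{k}{n}x$ in the interior, $\vec w=(kx_1,0,\dots,0)$ at the flat boundary) is a clean way to make visible where the $C|k|$ term comes from, and the remaining classical constant-coefficient regularity is precisely what the cited results provide. One point in your sketch is imprecise and worth fixing if you were to write it out: you cannot recover $D_1^2\vec u$ from the momentum equations alone by inverting $(a^{ij}_{11})_{ij}$ ``in terms of tangential second derivatives and $Dp$,'' because $D_1p$ is itself an unknown of the same order with no tangential control. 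The standard repair is to adjoin the normal derivative of the divergence constraint, $D_{11}\tilde u^1=-\sum_{j\ge2}D_{1j}\tilde u^j$, so that the $(n+1)\times(n+1)$ algebraic system in $(D_{11}\tilde u^1,\dots,D_{11}\tilde u^n,D_1p)$ closes; its invertibility reduces (after eliminating the $p$-column, which appears only in the first momentum equation) to that of the principal minor $(a^{ij}_{11})_{i,j\ge2}$, which strong ellipticity \eqref{ubc} guarantees. With that correction the argument is complete and matches the route taken in \cite{DK16}.
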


\begin{proof}
The interior and boundary estimates for Stokes systems with variable coefficients were studied by Giaquinta \cite{MR0641818}.
The proof of the assertion (a) is the same as that of  \cite[Theorem 1.10, pp. 186--187]{MR0641818}.
See also the proof of \cite[Theorem 2.8, p. 207]{MR0641818} for the boundary estimate \eqref{160927@eq3}.
We note that in \cite{MR0641818}, he gives the complete proofs for the Neumann problem and  mentioned that the method works for other boundary value problem.
Regarding the Dirichlet problem, we need to impose a normalization condition for $p$ because $(\vec u, p+c)$ satisfies the same system for any constant $c\in \bR$. 
By this reason, the right-hand sides of the estimates \eqref{160927@eq4} and \eqref{160927@eq3} contain the $L^2$-norm of $p$.
For more detailed proof, one may refer to \cite{arXiv:1604.02690v2}.
Their methods are general enough to allow the coefficients to be measurable in one direction and gives more precise information on the dependence of the constant $C$.
\end{proof}

\begin{theorem}		\label{123.thm1}
Let $2<\nu<q<\infty$ and $\nu'=2\nu/(\nu-2)$.
Assume $(\vec u,p)\in W^{1,q}_0(\Omega)^n\times L^q_0(\Omega)$ satisfies 
\begin{equation*}		
\left\{
\begin{aligned}
\sL\vec u+Dp=\vec f+D_\alpha\vec f_\alpha&\quad \text{in }\, \Omega,\\
\dv \vec u= g &\quad \text{in }\, \Omega,
\end{aligned}
\right.
\end{equation*}
where $\vec f,\, \vec f_\alpha\in L^2(\Omega)^n$ and $g\in L^2_0(\Omega)$.

\begin{enumerate}[(i)]
\item
Suppose that $(\bf{A3}\,(\gamma))$ $(a)$ holds at $0\in \Omega$ with $\gamma>0$.
Then, for $R\in (0, \min(R_0,d_0)]$, where $d_0=\dist(0,\partial \Omega)$, $(\vec u,p)$ admits a decomposition 
\[
(\vec u,p)=(\vec u_1,p_1)+(\vec u_2,p_2) \quad \text{in }\,  B_R,
\]
and we have 
\begin{align}		\label{123.eq1}
(U_1^2)^{1/2}_{B_R}&\le C\left(\gamma^{1/\nu'}(U^\nu)^{1/\nu}_{B_R}+(F^2)^{1/2}_{B_R}\right),\\
\label{123.eq1a}
\norm{U_2}_{L^\infty(B_{R/2})}
&\le C\left(\gamma^{1/\nu'}(U^\nu)^{1/\nu}_{B_R}+(U^2)^{1/2}_{B_R}+(F^2)^{1/2}_{B_R}\right),
\end{align}
where  $C=C(n,\lambda,\nu)$.
\item
Suppose that $(\bf{A3}\,(\gamma))$ $(a)$ and $(b)$ hold at $0\in \partial \Omega$ with $\gamma\in (0,1/2)$.
Then, for $R\in (0, R_0]$, $(\vec u,p)$ admits a decomposition 
\[
(\vec u,p)=(\vec u_1,p_1)+(\vec u_2,p_2) \quad \text{in }\,  \Omega_R,
\]
and we have 
\begin{align}		\label{123.eq1b}
(U_1^2)^{1/2}_{\Omega_R}&\le C\left(\gamma^{1/\nu'}(U^\nu)^{1/\nu}_{\Omega_R}+(F^2)^{1/2}_{\Omega_R}\right),\\
\label{123.eq1c}
\norm{U_2}_{L^\infty(\Omega_{R/4})}
&\le C\left(\gamma^{1/\nu'}(U^\nu)^{1/\nu}_{\Omega_R}+(U^2)^{1/2}_{\Omega_R}+(F^2)^{1/2}_{\Omega_R}\right),
\end{align}
where  $C=C(n,\lambda,\nu)$.
\end{enumerate}
Here, we define $U_i$ in the same way as $U$ with $p$ and $\vec u$ replaced by $p_i$ and $\vec u_i$, repectively.
\end{theorem}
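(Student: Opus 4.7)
The plan is to freeze the leading coefficients at their average over the reference ball and decompose $(\vec u, p) = (\vec u_1, p_1) + (\vec u_2, p_2)$ so that $(\vec u_1, p_1)$ absorbs both the inhomogeneities and the coefficient perturbation $(A_{\alpha\beta}-\bar A_{\alpha\beta})D_\beta \vec u$, while $(\vec u_2, p_2)$ solves a homogeneous constant-coefficient Stokes system with constant divergence, to which the pointwise bounds of Lemma~\ref{160924@lem1} apply. Concretely, set $\bar A_{\alpha\beta} := (A_{\alpha\beta})_{B_R}$ and let $D$ denote $B_R$ in case~(i) and $\Omega_R$ in case~(ii). I define $(\vec u_1, p_1) \in W^{1,2}_0(D)^n \times L^2_0(D)$ to be the solution, furnished by Lemma~\ref{122.lem1}, of
\[
\bar{\sL}\vec u_1 + D p_1 = \vec f + D_\alpha \vec f_\alpha + D_\alpha\bigl((A_{\alpha\beta}-\bar A_{\alpha\beta})D_\beta \vec u\bigr), \qquad \dv \vec u_1 = g - (g)_D,
\]
so that $(\vec u_2, p_2) := (\vec u - \vec u_1,\, p - p_1 - c)$, with $c$ chosen to normalize $(p_2)_D = 0$, satisfies $\bar{\sL}\vec u_2 + D p_2 = 0$ and $\dv \vec u_2 = (g)_D$, a constant.

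To establish \eqref{123.eq1} and \eqref{123.eq1b}, I apply \eqref{122.eq1a} (or the analogous bound on the Reifenberg piece $\Omega_R$) to obtain
\[
\|p_1\|_{L^2(D)} + \|D\vec u_1\|_{L^2(D)} \le C\bigl(R\|\vec f\|_{L^2(D)} + \|\vec f_\alpha\|_{L^2(D)} + \|g\|_{L^2(D)} + \|(A - \bar A) D\vec u\|_{L^2(D)}\bigr).
\]
The only nontrivial term is the last one. Using H\"older's inequality with exponents $\nu/(\nu-2)$ and $\nu/2$, the essential bound $|A - \bar A|\le 2\lambda^{-1}$ coming from \eqref{ubc}, and the BMO hypothesis $(\bf{A3}\,(\gamma))$~(a), I obtain
\[
\int_D |A-\bar A|^2 |D\vec u|^2\,dx \le C\Bigl(\int_{B_R}|A-\bar A|\,dx\Bigr)^{\!(\nu-2)/\nu}\|D\vec u\|_{L^\nu(B_R)}^2 \le C\gamma^{(\nu-2)/\nu}R^{n(\nu-2)/\nu}\|D\vec u\|_{L^\nu(B_R)}^2.
\]
Dividing by $|D|\sim R^n$ and taking square roots produces exactly the factor $\gamma^{1/\nu'}(U^\nu)^{1/\nu}_{B_R}$ appearing in \eqref{123.eq1}, since $1/\nu' = (\nu-2)/(2\nu)$. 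The term $R\|\vec f\|_{L^2}$ becomes $R\,(|\vec f|^2)^{1/2}_{B_R}$ after averaging, which is dominated by $(F^2)^{1/2}_{B_R}$ thanks to $R \le R_0 \le 1$.

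For \eqref{123.eq1a} and \eqref{123.eq1c}, I apply Lemma~\ref{160924@lem1} to $(\vec u_2, p_2)$ with $k = (g)_D$, using $|k|\le (F^2)^{1/2}_D$ and $\|U_2\|_{L^2(D)} \le \|U\|_{L^2(D)} + \|U_1\|_{L^2(D)}$ to reduce to bounds already established. In the interior case~(i) this is immediate from \eqref{160927@eq4} on $B_{R/2}$. The main technical obstacle is case~(ii): Lemma~\ref{160924@lem1}(b) requires a genuine half-ball with zero Dirichlet data on $\{y_1=0\}$, whereas $\vec u_2$ vanishes only on $\partial \Omega \cap B_R$, which by $(\bf{A3}\,(\gamma))$~(b) is trapped in the slab $\{|y_1|\le \gamma R\}$ but does not coincide with any flat hyperplane. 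I would resolve this by passing to the coordinates of $(\bf{A3}\,(\gamma))$~(b), selecting a shifted half-ball $B_{R/2}^+(z_0)$ with $z_0$ on the Reifenberg hyperplane (so that, for $\gamma < 1/2$, $B_{R/2}^+(z_0) \subset \Omega_R$ and $\Omega_{R/4} \subset B_{R/4}^+(z_0)$), zero-extending $\vec u_2$ across $\partial\Omega \cap B_R$, and then applying \eqref{160927@eq3} to the extension; the slack from $R/2$ to $R/4$ in the target set exactly accommodates this geometric mismatch. A secondary point is that Lemma~\ref{122.lem1} must be applicable to $\Omega_R$ with constants independent of the fine structure of $\partial\Omega$, which follows from the remark that $\gamma$-Reifenberg flat domains with small $\gamma$ are John domains satisfying $(\bf{D})$.
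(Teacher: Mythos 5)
Your treatment of part (i) is exactly the paper's argument and is fine: freeze $A_{\alpha\beta}$ at $(A_{\alpha\beta})_{B_R}$, let $(\vec u_1,p_1)$ absorb $\vec f$, $\vec f_\alpha$, $g-(g)_{B_R}$ and $D_\alpha\bigl((A^0_{\alpha\beta}-A_{\alpha\beta})D_\beta\vec u\bigr)$ via \eqref{122.eq1a}, estimate the commutator term by H\"older with exponents $\nu/(\nu-2)$, $\nu/2$ together with the $L^\infty$ bound on $A$, and apply Lemma \ref{160924@lem1}(a) to $(\vec u_2,p_2)$.

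Part (ii), however, has a genuine gap at precisely the point you flag as the ``main technical obstacle.'' If you define $(\vec u_1,p_1)$ as the solution of the frozen-coefficient problem on all of $\Omega_R$, then $\vec u_2=\vec u-\vec u_1$ solves the homogeneous system only \emph{inside} $\Omega_R$ and vanishes only on $\partial\Omega\cap B_R$. The zero extension of $\vec u_2$ across $\partial\Omega$ does lie in $W^{1,2}$, but it is \emph{not} a weak solution of $\sL_0\vec u_2+Dp_2=0$ in any half-ball $B^+_{R/2}(z_0)$ that crosses $\partial\Omega$: testing against $\vec\varphi$ supported near $\partial\Omega$ produces a nonzero conormal-derivative term concentrated on $\partial\Omega$. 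Moreover the extension does not vanish on the flat face $\{y_1=\text{const}\}$ of any shifted half-ball — it vanishes on the (non-flat) set $\partial\Omega\cap B_R$, and on the portion of the hyperplane lying inside $\Omega$ it is simply $\vec u_2$, which need not be zero. So the hypotheses of Lemma \ref{160924@lem1}(b) fail for the extension, and the $R/2\to R/4$ slack does not repair either defect. The paper resolves this by changing the \emph{decomposition itself} in the boundary case: with $y=(\gamma R,0,\dots,0)$ and a cutoff $\chi(x_1)$ vanishing for $x_1\le\gamma R$ and equal to $1$ for $x_1\ge 2\gamma R$, one writes the equation satisfied by $(\chi\vec u,\chi p)$ on $B_R\cap\{x_1>\gamma R\}$, solves the corresponding inhomogeneous frozen-coefficient problem for $(\hat{\vec u}_1,\hat p_1)$ on the genuine half-ball $B^+_{R/2}(y)\subset\Omega_R$, and sets $\vec u_1=\hat{\vec u}_1+(1-\chi)\vec u$, $\vec u_2=\chi\vec u-\hat{\vec u}_1$. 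Then $\vec u_2$ really does vanish on $\{x_1=\gamma R\}$ and solves the homogeneous system in $B^+_{R/2}(y)$, so Lemma \ref{160924@lem1}(b) applies; the price is the extra terms $pD\chi$, $(1-\chi)\vec u$, and $D\chi\cdot\vec u$, which are controlled by $\gamma^{1/\nu'}R^{n/\nu'}\|U\|_{L^\nu(\Omega_R)}$ using the smallness of the strip $\{x_1<2\gamma R\}$ (Lemma \ref{0812.lem1}) and the Hardy-type inequality of Lemma \ref{0323.lem1}. This construction also sidesteps your secondary concern, since Lemma \ref{122.lem1} is invoked only on the half-ball $B^+_{R/2}(y)$, where \eqref{122.eq1a} holds with a constant depending only on $n$ and $\lambda$, rather than on the Reifenberg piece $\Omega_R$.
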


\begin{proof}
The proof is an adaptation of that of \cite[Lemma 8.3]{MR2835999}.
To prove assertion $(i)$, 
we denote
\[
\sL_0\vec u=-D_\alpha(A_{\alpha\beta}^0D_\beta \vec u),
\]
where $A_{\alpha\beta}^0=(A_{\alpha\beta})_{B_R}$.
By Lemma \ref{122.lem1}, there exists a  unique solution $(\vec u_1,p_1)\in W^{1,2}_0(B_R)^n\times {L}_0^2(B_R)$ of the problem 
\begin{equation*}		
\left\{
\begin{aligned}
\sL_0\vec u_1+Dp_1=\vec f+D_\alpha \vec f_\alpha+D_\alpha\vec h_\alpha&\quad \text{in }\, B_R,\\
\dv \vec u_1=  g-( g)_{B_R}&\quad \text{in }\, B_R,
\end{aligned}
\right.
\end{equation*}
where 
\[
\vec h_\alpha=(A_{\alpha\beta}^0-A_{\alpha\beta})D_\beta \vec u.
\]
We also get from \eqref{122.eq1a} that  (recall $R\le R_0\le 1$)
\begin{equation*}
\norm{U_1}_{L^2(B_R)}\le C\left(\norm{\vec h_\alpha}_{L^2(B_R)}+\norm{F}_{L^2(B_R)}\right),
\end{equation*}
where $C=C(n,\lambda)$.
Therefore, by using the fact that 
\begin{equation*}		
\norm{\vec h_\alpha}_{L^2(B_R)}\le C\bignorm{A_{\alpha\beta}^0-A_{\alpha\beta}}_{L^1(B_R)}^{1/\nu'}\norm{D\vec u}_{L^\nu(B_R)}\le C_\nu\gamma^{1/\nu'}\abs{B_R}^{1/\nu'}\norm{D\vec u}_{L^\nu(B_R)},
\end{equation*}
we obtain \eqref{123.eq1}.
To see \eqref{123.eq1a}, we note  that $(\vec u_2,p_2)=(\vec u,p)-(\vec u_1,p_1)$ satisfies 
\begin{equation*}		
\left\{
\begin{aligned}
\sL_0\vec u_2+Dp_2=0&\quad \text{in }\, B_{R},\\
\dv \vec u_2= (g)_{B_R} &\quad \text{in }\, B_{R}.
\end{aligned}
\right.
\end{equation*}
Then by Lemma \ref{160924@lem1}, we get 
\[
\norm{U_2}_{L^\infty(B_{R/2})}\le C(U_2^2)^{1/2}_{B_R}+C(\abs{g}^2)^{1/2}_{B_R},
\]
and thus,  we conclude \eqref{123.eq1a} from \eqref{123.eq1}.

Next, we prove assertion $(ii)$.
Without loss of generality, we may assume that $(\bf{A3}(\gamma))$ $(b)$ hols at $0$ in the original coordinate system.
Define $\sL_0$ as above.
Let us fix $y:=(\gamma R,0,\ldots,0)$ and denote
\[
B_R^\gamma:=B_R\cap \set{x_1> \gamma R}.
\]
Then we have 
\begin{equation*}		
B_{R/2}\cap \set{x_1>\gamma R}\subset B_{R/2}^+(y)\subset B_R^\gamma.
\end{equation*}
Take a smooth function $\chi$ defined on $\bR$ such that 
\[
\chi(x_1)\equiv 0 \text{ for } x_1\le \gamma R, \quad \chi(x_1)\equiv 1 \text{ for } x_1\ge 2\gamma R, \quad \abs{\chi'}\le C(\gamma R)^{-1}.
\]
We then find that  $(\hat{\vec u}(x), \hat{p}(x))=(\chi(x_1) \vec u(x), \chi(x_1)p(x))$ satisfies 
\[
\left\{
\begin{aligned}
\sL_0 \hat{\vec u}+D\hat{p}=\vec \cF&\quad \text{in }\, B^\gamma_R,\\
\dv \hat{\vec u}=\cG &\quad \text{in }\, B_R^\gamma,\\
\hat{\vec u}=0 &\quad \text{on }\, B_R\cap \set{x_1=\gamma R},
\end{aligned}
\right.
\]
where we use the notation $\cG=D\chi\cdot \vec u+\chi g$ and
\begin{multline*}
\vec\cF=\chi \vec f+\chi D_\alpha \vec f_\alpha+p D\chi\\
+D_\alpha\big(A_{\alpha\beta}^0D_\beta ((1-\chi)\vec u)-(A_{\alpha\beta}^0-A_{\alpha\beta})D_\beta \vec u\big)+(\chi-1)D_\alpha(A_{\alpha\beta}D_\beta \vec u).
\end{multline*}
Let $(\hat{\vec u}_1,\hat{p}_1)\in W^{1,2}_0\big(B^+_{R/2}(y)\big)^n\times {L}^2_0\big(B^+_{R/2}(y)\big)$ satisfy 
\begin{equation}		\label{1128.eq0}
\left\{
\begin{aligned}
\sL_0 \hat{\vec u}_1+D\hat{p}_1=\vec \cF &\quad \text{in }\, B_{R/2}^+(y),\\
\dv \hat{\vec u}_1=\cG-(\cG)_{B^+_{R/2}(y)} &\quad \text{in }\, B_{R/2}^+(y),\\
\hat{\vec u}_1=0 &\quad \text{on }\, \partial B^+_{R/2}(y).
\end{aligned}
\right.
\end{equation}
Then by testing with $\hat{\vec u}_1$ in \eqref{1128.eq0}, we obtain 
\begin{align}
\nonumber		
&\int_{B^+_{R/2}(y)}A_{\alpha\beta}^0D_\beta \hat{\vec u}_1\cdot D_\alpha \hat{\vec u}_1\,dx\\
\nonumber
&=\int_{B^+_{R/2}(y)}\vec f\cdot (\chi \hat{\vec u}_1)-\vec f_\alpha \cdot D_\alpha(\chi \hat{\vec u}_1)+pD\chi\cdot \hat{\vec u}_1\,dx\\
\nonumber
&\quad +\int_{B^+_{R/2}(y)}-A_{\alpha\beta}^0D_\beta ((1-\chi)\vec u)\cdot D_\alpha \hat{\vec u}_1+(A_{\alpha\beta}^0-A_{\alpha\beta})D_\beta \vec u\cdot D_\alpha \hat{\vec u}_1\,dx\\
\label{1201.eq1}
&\quad +\int_{B^+_{R/2}(y)}-A_{\alpha\beta}D_\beta \vec u\cdot D_\alpha((\chi-1)\hat{\vec u}_1)\,dx+\hat{p}_1 (D\chi\cdot \vec u+\chi g)\,dx.
\end{align}
Note that
\[
\abs{D\chi(x_1)}+\abs{D(1-\chi(x_1))}\le C(x_1-\gamma R)^{-1}, \quad \forall x_1>\gamma R.
\]
Therefore, we obtain by Lemma \ref{0323.lem1} that  
\begin{equation}		\label{123.eq2}		
\norm{D(\chi \hat{\vec u}_1)}_{L^2(B^+_{R/2}(y))}+\norm{D((1-\chi) \hat{\vec u}_1)}_{L^2(B^+_{R/2}(y))}\le C\norm{D\hat{\vec u}_1}_{L^2(B^+_{R/2}(y))},
\end{equation}
and hence, we also have 
\begin{equation}		\label{123.eq2a}
\norm{D\chi \cdot \hat{\vec u}_1}_{L^2(B^+_{R/2}(y))}\le C\norm{D\hat{\vec u}_1}_{L^2(B^+_{R/2}(y))}.
\end{equation}
From \eqref{123.eq2a} and H\"older's inequality, we get 
\begin{align}
\nonumber
\int_{B^+_{R/2}(y)}pD\chi \cdot \hat{\vec u}_1\,dx&\le \norm{p}_{L^2(B^+_{R/2}(y)\cap \set{x_1<2\gamma R})}\norm{D\hat{\vec u}_1}_{L^2(B^+_{R/2}(y))}\\
\label{1201.eq3}
&\le C\gamma^{1/\nu'}R^{n/\nu'}\norm{p}_{L^\nu(\Omega_R)}\norm{D\hat{\vec u}_1}_{L^2(B^+_{R/2}(y))}.
\end{align}
Then, by applying \eqref{123.eq2}--\eqref{1201.eq3}, and the fact that  (recall $R\le R_0\le 1$)
\begin{equation*}		
\norm{\hat{\vec u}_1}_{L^2(B^+_{R/2}(y))}\le C(n)\norm{D\hat{\vec u}_1}_{L^2(B^+_{R/2}(y))}
\end{equation*}
to \eqref{1201.eq1}, 
we have   
\begin{equation*}		
\norm{D\hat{\vec u}_1}_{L^2(B^+_{R/2}(y))}\le \epsilon\norm{\hat{p}_1}_{L^2(B_{R/2}^+(y))}+C_\epsilon \norm{F}_{L^{2}(\Omega_R)}+C_\epsilon \cK, \quad \forall\epsilon>0,
\end{equation*}
where 
\begin{multline*}		
\cK:=\gamma^{1/\nu'}R^{n/\nu'}\norm{p}_{L^\nu(\Omega_R)}
+\norm{D((1-\chi)\vec u)}_{L^2(B^+_{R/2}(y))}\\
+\norm{D\vec u}_{L^2(B^+_{R/2}(y)\cap \set{x_1<2\gamma R })}+\bignorm{(A_{\alpha\beta}^0-A_{\alpha\beta})D_\beta \vec u}_{L^2(B^+_{R/2}(y))}.
\end{multline*}
Similarly, we have 
\begin{equation*}		
\norm{\hat{p}_1}_{L^2(B^+_{R/2}(y))}\le C\norm{D\hat{\vec u}_1}_{L^2(B^+_{R/2}(y))}+C\norm{F}_{L^2(\Omega_R)}+C\cK.
\end{equation*}
Therefore, from the above two inequality, we conclude that 
\begin{equation}		\label{1128.eq1a}
\norm{\hat{p}_1}_{L^2(B^+_{R/2}(y))}+\norm{D\hat{\vec u}_1}_{L^2(B^+_{R/2}(y))}\le C\norm{F}_{L^2(\Omega_R)}+C\cK,
\end{equation}
where $C=C(n,\lambda, \nu)$.
Now we claim that 
\begin{equation}		\label{123.eq3}
\cK\le C\gamma^{1/\nu'}R^{n/\nu'}\norm{U}_{L^\nu(\Omega_R)}.
\end{equation}
Observe that by H\"older's inequality and Lemma \ref{0812.lem1}, we have 
\begin{equation}		\label{1201.eq1a}
\norm{D\vec u}_{L^2(B^+_{R/2}(y)\cap \set{x_1<2\gamma R})} \le C(n,\nu)\gamma^{1/\nu'}R^{n/\nu'}\norm{D\vec u}_{L^\nu(\Omega_{R})}.
\end{equation}
We also have 
\begin{align}
\nonumber
\bignorm{(A_{\alpha\beta}^0-A_{\alpha\beta})D_\beta \vec u}_{L^2(B^+_{R/2}(y))}&\le C\left(\int_{B_{R}}\bigabs{A_{\alpha\beta}^0-A_{\alpha\beta}}\,dx\right)^{1/\nu'}\norm{D\vec u}_{L^\nu(B^+_{R/2}(y))}\\
\nonumber
&\le C\gamma^{1/\nu'}R^{n/\nu'}\norm{D\vec u}_{L^\nu(\Omega_R)},
\end{align}
where $C=C(n,\lambda,\nu)$.
To estimate $\norm{D((1-\chi)\vec u)}_{L^2(B^+_{R/2}(y))}$, we recall that $\chi-1=0$ for $x_1\ge 2\gamma R$.
For any $y'\in B'_{R}$, let $\hat{y}_1=\hat{y_1}(y')$ be the largest number such that $\hat{y}=(\hat{y}_1,y')\in \partial \Omega$.
Since $\abs{\hat{y}_1}\le \gamma R$, we have 
\[
x_1-\hat{y}_1\le x_1+\gamma R\le 3\gamma R , \quad \forall x_1\in [\gamma R, 2\gamma R],
\]
and thus, we obtain
\begin{equation*}		
\abs{D\chi(x_1)}\le C(x_1-\hat{y}_1), \quad \forall x_1\in [\gamma R, 2\gamma R]
\end{equation*}
Therefore, we find that 
\begin{align}
\nonumber
\int_{\gamma R}^r\abs{D((1-\chi)\vec u)(x_1,y')}^2\,dx_1&\le \int_{\hat{y}_1}^r\abs{D((1-\chi)\vec u)(x_1,y')}^2\,dx_1\\
\label{1201.eq1d}
&\le C\int_{\hat{y}_1}^r \abs{D\vec u(x_1,y')}^2\,dx_1,
\end{align}
where $r=r(y')=\min\big(2\gamma R, \sqrt{R^2-\abs{y'}^2}\big)$.
We then get from \eqref{1201.eq1d} that 
\begin{equation*}		
\norm{D((1-\chi)\vec u)}_{L^2(B^+_{R/2}(y))}\le C\gamma^{1/\nu'}R^{n/\nu'}\norm{D\vec u}_{L^\nu(\Omega_{R})},
\end{equation*}
where $C=C(n,\nu)$.
From the above estimates, we obtain \eqref{123.eq3}, and thus, by combining \eqref{1128.eq1a} and \eqref{123.eq3}, we conclude
\begin{equation}		\label{123.eq3a}
\norm{\hat{p}_1}_{L^2(B^+_{R/2}(y))}+\norm{D\hat{\vec u}_1}_{L^2(B^+_{R/2}(y))}\le C\left(\gamma^{1/\nu'}R^{n/\nu'}\norm{U}_{L^\nu(\Omega_R)}+\norm{F}_{L^2(\Omega_R)}\right),
\end{equation}
where $C=C(n,\lambda,\nu)$.

Now, we are ready to show the estimate \eqref{123.eq1b}.
We extend $\hat{\vec u}_1$ and $\hat{p}_1$ to be zero in $\Omega_R\setminus B^+_{R/2}(y)$.
Let $(\vec u_1,p_1)=\big(\hat{\vec u}_1+(1-\chi)\vec u, \hat{p}_1+(1-\chi)p\big)$. 
Since $(1-\chi)\vec u$ vanishes for $x_1\ge 2\gamma R$, by using the second inequality in \eqref{1201.eq1d} and H\"older's inequality as in \eqref{1201.eq1a}, we see that 
\begin{equation*}
\norm{D((1-\chi)\vec u)}_{L^2(B_{R/2})}\le C(n)\gamma^{1/\nu'}R^{n/\nu'}\norm{D\vec u}_{L^\nu(\Omega_R)}.
\end{equation*}
Moreover, it follows from H\"older's inequality that 
\begin{equation*}
\norm{(1-\chi)p}_{L^2(B_{R/2})}\le C(n)\gamma^{1/\nu'}R^{n/\nu'}\norm{p}_{L^\nu(\Omega_R)}.
\end{equation*}
Therefore, we conclude \eqref{123.eq1b} from \eqref{123.eq3a}.

Next, let us set $(\vec u_2,p_2)=(\vec u,p)-(\vec u_1,p_1)$.
Then, it is easily seen that $(\vec u_2,p_2)=(0,0)$ in $\Omega_{R}\setminus B^\gamma_{R}$ and $(\vec u_2,p_2)$ satisfies
\[
\left\{
\begin{aligned}
\sL_0\vec u_2+Dp_2=0 &\quad \text{in }\, B^+_{R/2}(y),\\
\dv \vec u_2=(\cG)_{B^+_{R/2}(y)}&\quad \text{in }\, B^+_{R/2}(y),\\
\vec u_2=0 &\quad \text{on }B_{R/2}(y)\cap \set{x_1=\gamma R}.
\end{aligned}
\right.
\]
By Lemma \ref{160924@lem1}, we get
\[
\norm{U_2}_{L^\infty(B^+_{R/2})}\le CR^{-n/2}\left(\norm{U_2}_{L^2(\Omega_R)}+\norm{\cG}_{L^2(B^+_{R/2}(y))}\right),
\]
and thus,  from \eqref{1201.eq1a} and \eqref{123.eq1b}, we obtain \eqref{123.eq1c}.
This completes the proof of the theorem.
\end{proof}

Now, we recall the maximal function theorem.
Let 
\[
\sB=\set{B_r(x):x\in \bR^n,\, r\in (0,\infty)}.
\]
For a function $f$ on a set $\Omega\subset \bR^n$, we define its maximal function $\cM(f)$ by 
\[
\cM(f)(x)=\sup_{B\in \sB,\, x\in B}\fint_B \abs{f(y)}1_\Omega\,dy.
\]
Then for $f\in L^q(\Omega)$ with $1<q\le \infty$, we have 
\begin{equation*}		
\norm{\cM(f)}_{L^q(\bR^n)}\le C\norm{f}_{L^q(\Omega)},
\end{equation*}
where $C=C(n,q)$.
As is well known, the above inequality  is due to the Hardy-Littlewood maximal function theorem.
Hereafter, we use the notation
\begin{equation*}				
\begin{aligned}
\cA(s)&=\set{x\in \Omega: U(x)>s},\\
\cB(s)&=\bigset{x\in \Omega:\gamma^{-1/\nu'}(\cM(F^2)(x))^{1/2}+(\cM(U^\nu)(x))^{1/\nu}>s}.
\end{aligned}
\end{equation*}
With Theorem \ref{123.thm1} in hand, we get the following corollary.

\begin{corollary}		\label{0126.cor1}
Suppose that  $(\bf{A3}\, (\gamma))$ holds with $\gamma\in (0,1/2)$, and $0\in \overline{\Omega}$.
Let  $2<\nu<q<\infty$ and  $\nu'=2\nu/(\nu-2)$.
Assume $(\vec u,p)\in W^{1,q}_0(\Omega)^n\times {L}^q_0(\Omega)$ satisfies 
\begin{equation*}		
\left\{
\begin{aligned}
\sL\vec u+Dp=D_\alpha\vec f_\alpha+\vec f&\quad \text{in }\, \Omega,\\
\dv \vec u= g &\quad \text{in }\, \Omega,
\end{aligned}
\right.
\end{equation*}
where $\vec f,\, \vec f_\alpha\in L^2(\Omega)^n$ and $g\in L^2_0(\Omega)$.
Then there exists a constant $\kappa=\kappa(n,\lambda,\nu)>1$ such that the following holds: 
If 
\begin{equation}		\label{1204.eq1b}
\abs{\Omega_{R/32}\cap \cA(\kappa s)}\ge \gamma^{2/\nu'}\abs{\Omega_{R/32}}, \quad  R\in (0,R_0], \quad s>0,
\end{equation}
then we have 
\[
\Omega_{R/32}\subset \cB(s).
\]
\end{corollary}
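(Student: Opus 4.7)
The plan is to argue by contrapositive: I assume there exists $y \in \Omega_{R/32}$ with $y \notin \cB(s)$, i.e.,
\[
(\cM(U^\nu)(y))^{1/\nu} \le s \quad\text{and}\quad (\cM(F^2)(y))^{1/2} \le \gamma^{1/\nu'} s,
\]
and then show that $\bigabs{\Omega_{R/32}\cap \cA(\kappa s)} < \gamma^{2/\nu'}\abs{\Omega_{R/32}}$ once $\kappa$ is taken large enough in terms of $n,\lambda,\nu$, contradicting \eqref{1204.eq1b}.

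I will split into an interior case $d_0 := \dist(0,\partial\Omega) \ge R/16$ and a boundary case $d_0 < R/16$. In the interior case I apply Theorem \ref{123.thm1}(i) at $0$ with radius $R/16 \le \min(R_0,d_0)$, obtaining a decomposition $(\vec u,p) = (\vec u_1,p_1)+(\vec u_2,p_2)$ on $B_{R/16}$ together with an $L^\infty$-estimate for $U_2$ on $B_{R/32}$. In the boundary case I pick $x^* \in \partial\Omega$ with $\abs{x^*}=d_0$ and apply Theorem \ref{123.thm1}(ii) at $x^*$ with radius $R/2 \le R_0$; elementary geometry gives $\Omega_{R/32}(0) \subset \Omega_{3R/32}(x^*) \subset \Omega_{R/8}(x^*)$, so the $L^\infty$-estimate for $U_2$ still covers $\Omega_{R/32}(0)$. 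In both situations every set entering the averages sits inside $B_R(y)$, while $\abs{\Omega_{R/2}(x^*)} \ge c(n)R^n$ by $\gamma$-Reifenberg flatness with $\gamma<1/2$ (trivially $\abs{B_{R/16}} = c(n)R^n$ in the interior case). Hence the pointwise bounds on the maximal functions at $y$ translate to
\[
(U^\nu)^{1/\nu}_{\mathrm{dom}} \le Cs, \qquad (F^2)^{1/2}_{\mathrm{dom}}\le C\gamma^{1/\nu'}s, \qquad (U^2)^{1/2}_{\mathrm{dom}}\le Cs,
\]
with $C=C(n)$, where $\mathrm{dom}$ stands for $B_{R/16}$ or $\Omega_{R/2}(x^*)$.

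Feeding these into \eqref{123.eq1}--\eqref{123.eq1a} (resp.\ \eqref{123.eq1b}--\eqref{123.eq1c}) yields
\[
(U_1^2)^{1/2}_{\mathrm{dom}} \le C_1\gamma^{1/\nu'}s, \qquad \norm{U_2}_{L^\infty(\Omega_{R/32})} \le C_1 s,
\]
with $C_1 = C_1(n,\lambda,\nu)$. Choosing $\kappa > 2C_1$ empties $\Omega_{R/32}\cap\set{U_2 > \kappa s/2}$, so $\Omega_{R/32}\cap\cA(\kappa s) \subset \Omega_{R/32}\cap\set{U_1 > \kappa s/2}$, and Chebyshev's inequality gives
\[
\bigabs{\Omega_{R/32}\cap\set{U_1 > \kappa s/2}} \le \frac{4}{\kappa^2 s^2}\int_{\mathrm{dom}}U_1^2\,dx \le \frac{C_2\gamma^{2/\nu'}R^n}{\kappa^2}.
\]
Combined with the lower bound $\abs{\Omega_{R/32}} \ge c(n)R^n$ (trivial in the interior case; Reifenberg flatness in the boundary case), enlarging $\kappa$ one further time depending only on $n,\lambda,\nu$ forces the right-hand side to be strictly smaller than $\gamma^{2/\nu'}\abs{\Omega_{R/32}}$, which is the desired contradiction.

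The main technical obstacle is the boundary case: I must select an auxiliary radius at $x^*$ so that Theorem \ref{123.thm1}(ii), whose statement is anchored at the boundary point, controls $U_2$ on a set still containing $\Omega_{R/32}(0)$, and I must invoke $\gamma$-Reifenberg flatness with $\gamma<1/2$ both to ensure $\abs{\Omega_{R/2}(x^*)} \gtrsim R^n$ (so that the $L^\nu$-average bound transfers cleanly from $y$) and to guarantee $\abs{\Omega_{R/32}(0)} \gtrsim R^n$. Once these geometric facts are in place, the remainder is the standard Caffarelli--Peral Chebyshev-plus-maximal-function comparison.
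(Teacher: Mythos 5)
Your argument is correct and follows essentially the same route as the paper: after normalizing by $s$, argue by contradiction/contrapositive from a point of $\Omega_{R/32}$ outside $\cB(s)$, split into the interior and boundary cases, apply Theorem \ref{123.thm1} (i) resp.\ (ii) on a suitable ball or half-ball covering $\Omega_{R/32}$, and conclude with Chebyshev's inequality applied to $U_1$ after absorbing $U_2$ via its $L^\infty$ bound. Your choices of radii ($R/16$ at $0$, $R/2$ at $x^*$) differ harmlessly from the paper's ($R/8$ at $0$, $R$ at the nearest boundary point), and you are merely more explicit than the paper about using the Reifenberg measure lower bound (Lemma \ref{0812.lem1}) to pass from the maximal function at the chosen point to averages over the decomposition domains.
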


\begin{proof}
By dividing $U$ and $F$ by $s$, we may assume $s=1$.
We prove by contradiction.
Suppose that there exists a point $x\in \Omega_{R/32}=B_{R/32}(0)\cap \Omega$ such that 
\begin{equation}		\label{1203.eq1}
\gamma^{-1/\nu'}(\cM(F^2)(x))^{1/2}+(\cM(U^\nu)(x))^{1/\nu}\le 1.
\end{equation}
In the case when $\dist(0,\partial \Omega)\ge R/8$, we note that 
\[
x\in B_{R/32}\subset B_{R/8}\subset \Omega.
\]
Due to Theorem \ref{123.thm1} $(i)$, we can decompose $(\vec u,p)=(\vec u_1,p_1)+(\vec u_2,p_2)$ in $B_{R/8}$ and then, by \eqref{1203.eq1}, we have 
\begin{equation*}	
(U_1^2)^{1/2}_{B_{R/8}}\le C_0\big(\gamma^{1/\nu'}(U^\nu)^{1/\nu}_{B_{R/8}}+(F^2)^{1/2}_{B_{R/8}}\big)\le C_0\gamma^{1/\nu'}
\end{equation*}
and 
\begin{equation*}		
\norm{U_2}_{L^\infty(B_{R/32})}\le C_0\big(\gamma^{1/\nu'}(U^\nu)^{1/\nu}_{B_{R/8}}+(U^2)^{1/2}_{B_{R/8}}+(F^2)^{1/2}_{B_{R/8}}\big)\le C_0,
\end{equation*}
where $C_0=C_0(n,\lambda,\nu)$.
From these inequalities and Chebyshev's inequality, we get 
\begin{align}
\nonumber
\bigabs{B_{R/32}\cap \cA(\kappa)}&=\bigabs{\set{x\in B_{R/32}:U(x)>\kappa}}\\
\label{126.eq1a}
&\le \bigabs{\set{x\in B_{R/32}:U_1>\kappa-C_0}}\le C(n)\frac{C_0^2}{(\kappa-C_0)^2}\gamma^{2/\nu'}\abs{B_{R/32}},
\end{align}
which contradicts with \eqref{1204.eq1b} if we choose $\kappa$ sufficiently large.

We now consider the case $\dist(0,\partial \Omega)<R/8$.
Let $y\in \partial \Omega$ satisfy $\abs{y}=\dist(0,\partial \Omega)$.
Then we have 
\[
x\in \Omega_{R/32}\subset \Omega_{R/4}(y).
\]
By Theorem \ref{123.thm1} $(ii)$,  we can decompose $(\vec u,p)=(\vec u_1,p_1)+(\vec u_2,p_2)$ in $\Omega_{R}(y)$ and then, by \eqref{1203.eq1}, we have 
\begin{equation*}
(U_1^2)^{1/2}_{\Omega_{R}(y)}\le C_0\gamma^{1/\nu'} \quad \text{and}\quad 
\norm{U_2}_{L^\infty(\Omega_{R/4}(y))}\le C_0.
\end{equation*}
From this, and by following the same steps used in deriving \eqref{126.eq1a}, we get 
\[
\bigabs{\Omega_{R/32}\cap \cA(\kappa)}\le C(n)\frac{C_0^2}{(\kappa-C_0)^2}\gamma^{2/\nu'}\abs{\Omega_{R/32}},
\]
which contradicts with \eqref{1204.eq1b} if we choose $\kappa$ sufficiently large.
\end{proof}

%=========================================================
\subsection{Proof of Theorem \ref{0123.thm1}}
%=========================================================
We fix $2<\nu<q$ and denote $\nu'=2\nu/(\nu-2)$.
Let $\gamma\in (0,1/2)$ be a constant to be chosen later and  $\kappa=\kappa(n,\lambda,\nu)$ be the constant in Corollary \ref{0126.cor1}.
Since
\[
\abs{\cA(\kappa s)}\le C_0(\kappa s)^{-1}\norm{U}_{L^2(\Omega)}
\]
for all $s>0$, where $C_0=C_0(n,K_0)$, 
 we get 
\begin{equation}		\label{1215.eq1}
\abs{\cA(\kappa s)}\le \gamma^{2/\nu'}\abs{B_{R_0/32}},
\end{equation}
provided that 
\[
s\ge \frac{C_0}{\kappa \gamma^{2/\nu'}|B_{R_0/32}|}\norm{U}_{L^2(\Omega)}:=s_0.
\]
Therefore, from \eqref{1215.eq1}, Corollary \ref{0126.cor1}, and Lemma \ref{0324.lem1}, we have the following upper bound of the distribution of $U$; 
$$
\abs{\cA(\kappa s)}\le C_1\gamma^{2/\nu'}\abs{\cB(s)} \quad \forall s>s_0,
$$
where $C_1=C_1(n)$.
Using this together with the fact that 
$$
\abs{\cA(\kappa s)}\le (\kappa s)^{-2}\norm{U}_{L^2(\Omega)}^2, \quad \forall s>0,
$$
we have 
\begin{align*}
\norm{U}_{L^q(\Omega)}^q&=q\int_0^\infty\abs{\cA(s)}s^{q-1}\,ds=q\kappa^{q}\int_0^\infty \abs{\cA(\kappa s)}s^{q-1}\,ds\\
&= q\kappa^q\int_0^{s_0}\abs{\cA(\kappa s)}s^{q-1}\,ds+q\kappa^q\int_{s_0}^\infty \abs{\cA(\kappa s)}s^{q-1}\,ds\\
&\le C_2\gamma^{2(2-q)/\nu'}\norm{U}_{L^2(\Omega)}^q+C_3\gamma^{2/\nu'}\int_0^\infty \abs{\cB(s)}s^{q-1}\,ds,
\end{align*}
where $C_2=C_2(n,\lambda,K_0,q,R_0)$ and $C_3=C_3(n,\lambda,q)$.
The Hardy-Littlewood maximal function theorem implies that 
$$
\norm{U}^q_{L^q(\Omega)}\le C_2\gamma^{2(2-q)/\nu'}\norm{U}_{L^2(\Omega)}^q+C_4\gamma^{(2-q)/\nu'}\norm{F}^q_{L^q(\Omega)}+C_4\gamma^{2/\nu'}\norm{U}_{L^q(\Omega)}^q,
$$
where $C_4=C_4(n,\lambda,q)$.
Notice from Lemma \ref{122.lem1} and H\"older's inequality that 
$$
\norm{U}_{L^2(\Omega)}^q\le  C_5\norm{F}_{L^q(\Omega)}^q,
$$
where $C_5=C_5(n,\lambda,K_0,q,A)$.
Combining the above two estimates and taking $\gamma=\gamma(n,\lambda,q)\in (0,1/2)$ sufficiently small, we conclude \eqref{1204.eq2}.
\hfill\qedsymbol

%=========================================================
\section{Appendix}		\label{app}
%=========================================================
In this section, we provide some lemmas.

\begin{lemma}		\label{0323.lem1}
Let $f\in W^{1,2}_0(I)$, where $I=(0, R)$.
Then we have
\begin{equation}		\label{0508.eq1}
\norm{x^{-1}f(x)}_{L_2(I)}\le C\norm{Df}_{L_2(I)},
\end{equation}
where  $C>0$ is a constant.
\end{lemma}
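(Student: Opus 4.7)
The inequality \eqref{0508.eq1} is the classical one-dimensional Hardy inequality on an interval, and the plan is to prove it by integration by parts combined with the Cauchy--Schwarz inequality. By density of $C^\infty_c(I)$ in $W^{1,2}_0(I)$, I may assume $f \in C^\infty_c(I)$ so that all boundary terms vanish and the computation below is justified without technicalities at the endpoints; the general case follows by approximation since both sides of \eqref{0508.eq1} depend continuously on $f$ in the $W^{1,2}$-norm.

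The key identity is to observe that $\frac{d}{dx}(-1/x) = 1/x^2$, so that integration by parts yields
\begin{equation*}
\int_0^R \frac{f(x)^2}{x^2}\,dx = \left[-\frac{f(x)^2}{x}\right]_0^R + \int_0^R \frac{2f(x)f'(x)}{x}\,dx.
\end{equation*}
The boundary term at $x=R$ vanishes since $f(R)=0$ (as $f\in W^{1,2}_0(I)$), and the boundary term at $x=0$ vanishes for test functions supported away from $0$. Applying the Cauchy--Schwarz inequality to the remaining integral gives
\begin{equation*}
\int_0^R \frac{f(x)^2}{x^2}\,dx \le 2\left(\int_0^R \frac{f(x)^2}{x^2}\,dx\right)^{1/2}\left(\int_0^R f'(x)^2\,dx\right)^{1/2},
\end{equation*}
from which \eqref{0508.eq1} follows with $C=2$ after dividing both sides by $\bigl(\int_0^R f^2/x^2\,dx\bigr)^{1/2}$ (assuming it is finite and positive; otherwise the inequality is trivial or is recovered by first truncating the interval to $(\varepsilon, R)$ and letting $\varepsilon\to 0^+$).

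There is essentially no genuine obstacle here: the only subtlety is justifying that the boundary term at $x=0$ vanishes for general $f\in W^{1,2}_0(I)$, which is handled by the density argument above, or alternatively by integrating over $(\varepsilon,R)$, using $f(\varepsilon)^2/\varepsilon \le \varepsilon^{-1}\bigl(\int_0^\varepsilon |f'|\,dt\bigr)^2 \le \|f'\|_{L^2(0,\varepsilon)}^2 \to 0$ as $\varepsilon \to 0^+$, and then sending $\varepsilon\to 0^+$. This completes the proof.
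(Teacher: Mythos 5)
Your proof is correct, and it is self-contained where the paper's is not: the paper simply quotes the smooth case of this Hardy-type inequality from \cite[Lemma 7.9]{DK11a} (for $f\in C^\infty([0,R])$ with $Df(0)=0$) and then passes to $W^{1,2}_0(I)$ by density and Fatou's lemma, whereas you actually prove the core inequality by the classical integration-by-parts/Cauchy--Schwarz argument, obtaining the explicit constant $C=2$. Your computation for $f\in C^\infty_c(I)$ is right, and the absorption step is handled correctly (the finiteness caveat and the truncation to $(\varepsilon,R)$ cover the degenerate cases). One small caution on the extension step: the phrase ``both sides depend continuously on $f$ in the $W^{1,2}$-norm'' is, read literally, circular for the left-hand side --- continuity of $f\mapsto \|x^{-1}f\|_{L^2}$ on $W^{1,2}_0(I)$ is exactly what the lemma asserts. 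The clean way to finish is either the paper's route (pass to a subsequence of $f_n\to f$ converging uniformly, or a.e., and apply Fatou to $\int_0^R |x^{-1}f_n|^2\,dx$) or your own alternative, namely integrating over $(\varepsilon,R)$ for $f\in W^{1,2}_0(I)$ directly and using $f(\varepsilon)^2/\varepsilon\le\|f'\|_{L^2(0,\varepsilon)}^2\to0$; since you spell out the latter, the proof stands. Net comparison: your argument buys an explicit constant and independence from the external reference, at the cost of a few extra lines; the paper's buys brevity by outsourcing the Hardy inequality.
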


\begin{proof}
We first note that \eqref{0508.eq1} holds for any $f\in C^\infty([0,R])$ satisfying $Df(0)=0$;
see \cite[Lemma 7.9]{MR2835999}.
Suppose that $f\in W^{1,2}_0(I)$ and  $\set{f_n}$ is a sequence in $C^\infty_0([0,R])$ such that $f_n\to f$ in $W^{1,2}(I)$.
Then by the Sobolev embedding theorem, $f_n\to f$ in $C([0,R])$.
Since the estimates \eqref{0508.eq1} is valid for $f_n$, we obtain by Fatou's lemma that 
\begin{align*}
\int_0^R\bigabs{x^{-1}f(x)}^2\,dx&=\int_0^R\lim_{n\to \infty}\bigabs{x^{-1}f_n(x)}^2\,dx\\
&\le \liminf_{n\to \infty}\int_0^R\bigabs{x^{-1}f_n(x)}^2\,dx\\
&\le C\liminf_{n\to \infty}\int_0^R\abs{Df_n(x)}^2\,dx=\int_0^R\abs{Df(x)}^2\,dx,
\end{align*}
which establishes \eqref{0508.eq1}.
\end{proof}

\begin{lemma}		\label{0812.lem1}
Suppose that $(\bf{A3}(\gamma))$ $(b)$ holds at $0\in \partial \Omega$ with $\gamma\in \big(0,\frac{1}{2}\big)$.
Then for $R\in (0,R_0]$, we have 
\begin{equation}		\label{123.a1}		
\abs{\Omega_R}\ge CR^n,
\end{equation}
and
\begin{equation}		\label{123.a1a}		
\abs{\Omega_R\cap \set{x:x_1<2\gamma R}}\le C\gamma\abs{\Omega_R},
\end{equation}
where $C=C(n)$.
\end{lemma}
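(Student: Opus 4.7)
The plan is to exploit directly the two-sided inclusion provided by $(\bf{A3}(\gamma))$ $(b)$ at $x=0$, namely (in the adapted coordinate system at $0$ and scale $R$)
\[
\{y\in B_R:y_1>\gamma R\}\subset \Omega_R\subset \{y\in B_R:y_1>-\gamma R\}.
\]
Everything reduces to elementary volume computations on spherical caps.

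For \eqref{123.a1}, I would use only the first inclusion and the hypothesis $\gamma<1/2$ to discard the dependence on $\gamma$: since $\gamma R<R/2$,
\[
|\Omega_R|\ge |\{y\in B_R:y_1>\gamma R\}|\ge |\{y\in B_R:y_1>R/2\}|=\omega_{n-1}\int_{R/2}^R(R^2-t^2)^{(n-1)/2}\,dt\ge C(n)R^n,
\]
where $\omega_{n-1}$ is the volume of the unit ball in $\bR^{n-1}$ and the last inequality follows by restricting the integrand to $t\in[R/2,3R/4]$, on which $(R^2-t^2)^{(n-1)/2}$ is bounded below by a dimensional constant times $R^{n-1}$.

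For \eqref{123.a1a}, I would use the second inclusion: $\Omega_R\cap\{x_1<2\gamma R\}\subset\{y\in B_R:-\gamma R<y_1<2\gamma R\}$. Bounding $(R^2-y_1^2)^{(n-1)/2}\le R^{n-1}$ on this slab,
\[
|\Omega_R\cap\{x_1<2\gamma R\}|\le \omega_{n-1}\int_{-\gamma R}^{2\gamma R}(R^2-t^2)^{(n-1)/2}\,dt\le 3\omega_{n-1}\,\gamma R^n.
\]
Combining this with \eqref{123.a1} yields \eqref{123.a1a} with a constant depending only on $n$.

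There is no serious obstacle here; the only point worth care is using $\gamma<1/2$ to make the lower bound in \eqref{123.a1} dimension-only (otherwise the integral from $\gamma R$ to $R$ degenerates as $\gamma\to 1$), and remembering that the inclusions hold only in the coordinate system adapted to the pair $(0,R)$ supplied by the Reifenberg condition, which is harmless because both quantities in the lemma are invariant under rigid rotations about $0$.
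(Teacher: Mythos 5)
Your proof is correct and follows essentially the same route as the paper: both exploit the two-sided inclusion from $(\bf{A3}(\gamma))$ $(b)$, bound the set in \eqref{123.a1a} by the slab $\set{-\gamma R<y_1<2\gamma R}\cap B_R$ of volume $\le C(n)\gamma R^n$, and bound $\abs{\Omega_R}$ below by the volume of the cap $\set{y\in B_R:y_1>R/2}$ using $\gamma<1/2$ (the paper inscribes a box in that cap rather than evaluating the cap integral, but this is the same elementary estimate).
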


\begin{proof}
Note that 
\begin{equation}		\label{0410.eq1b}
\abs{\Omega_R\cap \set{x:x_1<2\gamma R}}\le 2^n\gamma R^n.
\end{equation}
Let us fix $a\in(\frac{1}{2},1)$ and 
\[
\cQ=\Set{x: \abs{x_1}<aR,\, \abs{x_i}<\sqrt{\frac{1-a^2}{d-1}}R, \, i=2,\ldots,n}. 
\]
Then we have
\[
\cQ\cap \set{x:x_1>R/2}\subset \Omega_R,
\]
and hence, we obtain
\begin{equation*}		
\left(a-\frac{1}{2}\right)\left(\frac{1-a^2}{n-1}\right)^{(n-1)/2}R^n=\bigabs{\cQ\cap \set{x:x_1>R/2}}\le \abs{\Omega_R},
\end{equation*}
which implies \eqref{123.a1}.
By combining  \eqref{123.a1} and \eqref{0410.eq1b}, we get \eqref{123.a1a}.
\end{proof}

The following lemma is a result from the measure theory on the ``crawling of ink spots" which can be found in \cite{MR0563790, MR0579490}. 
See also \cite{MR2069724}.

\begin{lemma}		\label{0324.lem1}
Suppose that $(\bf{A3}(\gamma))$ $(b)$ holds with $\gamma\in \big(0, \frac{1}{2}\big)$.
Let $A$ and $B$ are measurable sets satisfying $A\subset B\subset \Omega$, and that there exists a constant $\epsilon\in (0,1)$ such that the following hold:
\begin{enumerate}[(i)]
\item
$\abs{A}<\epsilon \abs{B_{R_0/32}}$.
\item
For any $x\in \overline{\Omega}$ and for all $R\in (0, R_0/32]$ with $\abs{B_R(x)\cap A}\ge \epsilon \abs{B_R}$, we have $\Omega_R(x)\subset B$.
\end{enumerate}
Then we get 
\[
\abs{A}\le C\epsilon \abs{B},
\]
where $C=C(n)$.
\end{lemma}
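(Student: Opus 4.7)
The plan is to carry out the classical Vitali-type ``crawling of ink spots'' argument, tailored to the Reifenberg flat setting via Lemma \ref{0812.lem1}. For almost every $x \in A$ (namely, the Lebesgue density points of $A$), the quantity $|A \cap B_r(x)|/|B_r(x)|$ tends to $1$ as $r \downarrow 0$ and, by hypothesis (i), is bounded above by $|A|/|B_{R_0/32}| < \epsilon$ at $r = R_0/32$. I would first use this together with continuity in $r$ to associate to each such $x$ a stopping radius $r_x \in (0, R_0/32]$ at which the density first falls to $\epsilon$; that is, $|A \cap B_{r_x}(x)| = \epsilon|B_{r_x}|$ while $|A \cap B_r(x)| < \epsilon|B_r|$ for every $r \in (r_x, R_0/32]$. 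Hypothesis (ii) applied at $B_{r_x}(x)$ then immediately yields $\Omega_{r_x}(x) \subset B$.

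Next I would apply the classical Vitali covering lemma to the family $\{B_{r_x}(x)\}$, whose radii are uniformly bounded by $R_0/32$, and extract a countable, pairwise disjoint subfamily $\{B_{r_i}(x_i)\}_i$ with the property that $A \subset \bigcup_i B_{5r_i}(x_i)$ modulo a Lebesgue null set. The density bound
\[
|A \cap B_{5r_i}(x_i)| \le \epsilon \, |B_{5r_i}(x_i)| = 5^n \epsilon \, |B_{r_i}(x_i)|
\]
then holds for each $i$: if $5 r_i \le R_0/32$ it is inherited from the stopping condition at $x_i$, while if $5r_i > R_0/32$ it follows from hypothesis (i) together with the monotonicity $|B_{R_0/32}| \le |B_{5r_i}|$.

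The final step is to pass from $|B_{r_i}(x_i)|$ to $|B \cap B_{r_i}(x_i)|$. For this I need the uniform measure bound $|B_{r_i}(x_i)| \le C(n)\,|\Omega_{r_i}(x_i)|$; combining it with $\Omega_{r_i}(x_i) \subset B$ yields $|B_{r_i}(x_i)| \le C(n) |B \cap B_{r_i}(x_i)|$. Summing over $i$ and invoking disjointness of the $B_{r_i}(x_i)$'s then telescopes the sum into $|B|$ and produces
\[
|A| \;\le\; \sum_i |A \cap B_{5r_i}(x_i)| \;\le\; 5^n \epsilon \sum_i |B_{r_i}(x_i)| \;\le\; C(n)\, \epsilon \sum_i |B \cap B_{r_i}(x_i)| \;\le\; C(n)\, \epsilon\, |B|,
\]
which is the desired conclusion.

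The main obstacle I anticipate is not any single inequality but rather establishing the uniform lower bound $|\Omega_r(x)| \ge c(n) r^n$ needed in the third step for \emph{every} $x \in \overline\Omega$, whereas Lemma \ref{0812.lem1} as stated furnishes it only for $x \in \partial\Omega$. The extension is routine but not automatic: when $\dist(x,\partial\Omega) \ge r$ the bound is trivial, and otherwise one transfers the Reifenberg flatness from a boundary point closest to $x$, losing only a dimensional constant. A secondary technicality is justifying the continuity in $r$ of $|A \cap B_r(x)|/|B_r|$ used to define $r_x$, but this follows from dominated convergence and is standard.
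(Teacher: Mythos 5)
Your proposal is correct and follows essentially the same route as the paper: stopping-time radii at density $\epsilon$, a Vitali covering with the $5r$-dilation, the doubling bound $|A\cap B_{5r_i}(x_i)|\le 5^n\epsilon|B_{r_i}|$ (split according to whether $5r_i$ exceeds $R_0/32$), and the lower bound $|\Omega_r(x)|\ge c(n)r^n$ from Lemma \ref{0812.lem1} combined with hypothesis (ii) and disjointness to close the chain. Your remark that Lemma \ref{0812.lem1} is stated only for $x\in\partial\Omega$ and must be transferred to arbitrary $x\in\overline\Omega$ is a legitimate technical point that the paper's own proof glosses over, and your fix is the right one.
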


\begin{proof}
We first claim that for a.e. $x\in A$, there exists  $R_x\in (0, R_0/32)$ such that 
\begin{equation*}
\abs{A\cap B_{R_x}(x)}=\epsilon \abs{B_{R_x}}
\end{equation*}
and 
\begin{equation}		\label{0324.eq1a}
\abs{A\cap B_{R}(x)}< \epsilon \abs{B_R}, \quad \forall R\in (R_x, R_0/32].
\end{equation}
Note that the function $\rho=\rho(r)$ given by 
\[
\rho(r)=\frac{\abs{A\cap B_r(x)}}{\abs{B_r}}=\fint_{B_r(x)}1_A(y)\,dy
\]
is continuous on $[0, R_0]$.
Since $\rho(0)=1$ and $\rho(R_0/32)<\epsilon$, there exists $r_x\in (0, R_0/32)$ such that $\rho(r_x)=\epsilon$.
Then we get the claim by setting
\[
R_x:=\max\set{r_x\in (0, R_0):\rho(r_x)=\epsilon}.
\]
Hereafter, we denote by
$$
\cU=\set{B_{R_x}(x):x\in A'},
$$
where $A'$ is the set of all points $x\in A$ such that $r_x$ exists.
Then by the Vitali lemma, we have a countable subcollection $G$ such that  
\begin{enumerate}[(a)]
\item
$Q\cap Q'=\emptyset$ for any $Q, Q'\in G$ satisfying $Q\neq Q'$.
\item 
$A'\subset \cup \set{B_{5R}(x): B_R(x)\in G}$.
\item
$\abs{A}=|A'|\le 5^{n}\sum_{Q\in G}\abs{Q}$.
\end{enumerate}
By the assumption (i) and \eqref{0324.eq1a}, we see that 
$$
\abs{A\cap B_{5R}(x)}<\epsilon\abs{B_{5R}}=\epsilon5^n\abs{B_R}, \quad \forall B_R(x)\in G.
$$
Using this together with the assumption (ii) and Lemma \ref{0812.lem1}, we have 
\begin{align*}
\abs{A}&=\bigabs{\cup\set{B_{5R}(x)\cap A:B_{R}(x)\in G}}\le \sum_{B_{R}(x)\in G}\abs{B_{5R}(x)\cap A}\\
&<\epsilon 5^{n}\sum_{B_R(x)\in G}\abs{B_R(x)}\le \epsilon C(n)\sum_{B_R(x)\in G}\abs{B_R(x)\cap \Omega}\\
&=\epsilon C(n)\bigabs{\cup\set{B_R(x)\cap \Omega:B_R(x)\in G}}\\
&\le \epsilon C(n)\abs{B},
\end{align*}
which completes the proof.
\end{proof}

%=========================================================
\begin{acknowledgment}
The authors would like to express their sincerely gratitude to the referee for careful reading and for many helpful comments and suggestions.
The authors also thank Doyoon Kim for valuable discussions and comments.
Ki-Ahm Lee was supported by the National Research Foundation of Korea (NRF)
grant funded by the Korea government (MSIP) (No. 2014R1A2A2A01004618). Ki-Ahm
Lee also hold a joint appointment with the Research Institute of Mathematics of Seoul
National University.
Jongkeun Choi was supported by BK21 PLUS SNU Mathematical Sciences Division.
\end{acknowledgment}
%=========================================================

\bibliographystyle{plain}

%\bibliography{/Users/ChoiJongkeun/Dropbox/Bibtex/Paper}

%\bibliography{/Users/jkchoi2749/Dropbox/Bibtex/Paper}

%%%%%%%%%%%%%%%%%%%%%%%%%%%

%%%%%%%%%%%%%%%%%%%%%%%%%

\end{document}